\documentclass[a4paper]{article}
\usepackage[margin=20mm]{geometry}
\usepackage[utf8]{inputenc}
\usepackage{graphics}
\usepackage{epsfig}
\usepackage{xspace}
\usepackage{url}
\usepackage{amssymb} 

\usepackage{amsmath,amscd,amsthm}
\usepackage{amsfonts}
\usepackage{multirow,booktabs}
\usepackage{amssymb}
\usepackage{hyperref,tikz,tikz-cd}
\usepackage{caption}
\usepackage{accents}
\usepackage{subcaption}
\usepackage{pdfpages}
\usepackage[mathscr]{euscript}
\usepackage{mathrsfs}
\usepackage{listings}
 \usepackage{enumerate}
 \usepackage{cite}
 \usepackage{hyperref}
 \hypersetup{linkcolor=teal, colorlinks=true ,citecolor = cyan}
\usetikzlibrary{decorations.pathreplacing,decorations.markings}

\newtheorem{theorem}{Theorem}[section]
\newtheorem{lemma}[theorem]{Lemma}
\newtheorem{proposition}[theorem]{Proposition}
\newtheorem{corollary}[theorem]{Corollary}
\newtheorem{remark}[theorem]{Remark}
\newtheorem{definition}[theorem]{Definition}

\setlength{\textwidth}{469.75499pt}
\setlength{\textheight}{640.20255pt}
\setlength{\oddsidemargin}{0 mm}
\setlength{\evensidemargin}{0 mm}
\setlength{\topmargin}{0 mm}
\setlength{\headsep}{0 mm}
\setlength{\headheight}{0 mm}
\catcode`@=11 \@addtoreset{equation}{section}
\renewcommand\theequation{\thesection.\@arabic\c@equation}
\catcode`@=12

\makeatother
\usepackage[english]{babel}
\providecommand{\keywords}[1]
{
  \small	
  \textbf{\textit{Keywords---}} #1
}
\providecommand{\MSC}[1]
{
  \small	
  \textbf{\textit{Mathematics Subject Classification---}} #1
}
\usepackage{titlesec,authblk}


 \usepackage[hyperpageref]{backref}




\title{On the study of $(p, Q)$-Laplace Choquard equations with critical Trudinger-Moser nonlinearity in $\mathbb{H}^N$ }
\date{}
\author{Deepak Kumar Mahanta$^{1}$, Tuhina Mukherjee$^{1}$, Abhishek Sarkar$^{1,}$\thanks{Corresponding author} \& Lovelesh Sharma$^{1}$ \\
        \small $^{1}$  Department of Mathematics, Indian Institute of Technology Jodhpur, Rajasthan 342030, India \\
        }
\newcommand{\Addresses}{{
  \bigskip
  \footnotesize

  D.K.~Mahanta, \textit{E-mail address:} \texttt{mahanta.1@iitj.ac.in}

  \medskip

  T.~Mukherjee, \textit{E-mail address:} \texttt{tuhina@iitj.ac.in}

  \medskip

   A.~Sarkar, \textit{E-mail address:} \texttt{abhisheks@iitj.ac.in}

    \medskip

  L.~Sharma, \textit{E-mail address:} \texttt{sharma.94@iitj.ac.in}

}}

\begin{document}
\maketitle \vspace{-1.8\baselineskip}
\begin{abstract}
This paper deals with the existence and multiplicity of nontrivial solutions for  $(p, Q)$-Laplace equations with the Stein-Weiss reaction under critical exponential nonlinearity in the Heisenberg group $\mathbb{H}^N$. In addition, a weight function and two positive parameters have also been included in the nonlinearity. The developed analysis is significantly influenced by these two parameters. Further, the mountain pass theorem, the Ekeland variational principle, the Trudinger-Moser inequality, the doubly weighted  Hardy-Littlewood-Sobolev inequality and a completely new Br\'ezis-Lieb type lemma for Choquard nonlinearity play key roles in our proofs.  
  
\end{abstract}
\keywords{$(p, Q)$-Laplace equations, Heisenberg group, Trudinger-Moser inequality, Choquard nonlinearity  }\\
\noindent\MSC{35D30,35J20,35J60, 35J92,35R03} 
\section{Introduction}
In this paper, we deal with a Choquard equation involving $(p, Q)$-Laplace operator and critical Trudinger-Moser nonlinearity in $\mathbb{H}^N$. More precisely, we study the following equation
\begin{equation}\label{main problem}
  \mathcal{L}_{H,p}(u)+\mathcal{L}_{H,Q}(u)=\mu g(\xi)|u|^{s-2}u+\gamma\Bigg(\displaystyle\int_{\mathbb{H}^N}\frac{F(\eta,u)}{r(\eta)^\beta {d_K(\xi,\eta)}^\lambda}~\mathrm{d}\eta\Bigg)\frac{f(\xi,u)}{r(\xi)^\beta}~~\text{in}~~\mathbb{H}^N,\tag{\textcolor{purple}{$\mathcal{P}_{\mu,\gamma}$}}
\end{equation}
where 
$$\mathcal{L}_{H,t}(u)=-\Delta_{H,t} u+|u|^{t-2}u~~\text{for}~t\in\{p,Q\}$$
with $Q=2N+2$ is the homogeneous dimension of the Heisenberg group $\mathbb{H}^N$. Further, we assume that  $1<s<p<Q,~\beta\geq 0,~0<\lambda<Q$, $2\beta+\lambda<Q$, $\mu$ and $\gamma$ are two positive parameters, and $g:\mathbb{H}^N\to (0,\infty)$ is a weight function in $L^\vartheta(\mathbb{H}^N)$ with $\vartheta=\frac{Q}{Q-s}$. The functions $r(\cdot)$ and $d_K(\cdot,\cdot)$ that appear in \eqref{main problem} are called the Kor\'anyi norm and the Kor\'anyi distance respectively in $\mathbb{H}^N$, which are defined by
$$r(\xi)=r(z,t)=(|z|^4+t^2)^{\frac{1}{4}},~\forall~\xi\in \mathbb{H}^N~~\text{and}~~d_K(\xi,\xi^\prime)=r(\xi^{-1}o ~\xi^\prime),~\forall~(\xi,\xi^\prime)\in \mathbb{H}^N\times \mathbb{H}^N, $$
where $\xi=(z,t)\in \mathbb{H}^N,~z=(x,y)=(x_1,\dots,x_N,y_1,\dots,y_N)\in\mathbb{R}^N\times \mathbb{R}^N,~t\in \mathbb{R}$, and $|z|$ is the Euclidean norm in $\mathbb{R}^{2N}$. Similarly, we can define $\xi^\prime$ as well. Note that $\xi^{-1}=-\xi,$ and  $\xi^{-1}o ~\xi^\prime$ can be computed as in \eqref{eq2.1}.

Moreover, for any $\varphi\in\mathcal{C}^2(\mathbb{H}^N)$, the operator $\Delta_{H,t} \varphi=\text{div}_H(|D_H \varphi|_H^{t-2}D_H \varphi)$ with $t\in\{p, Q\}$ is the standard $t$-Kohn–Spencer Laplace (or horizontal t-Laplace) operator. Here, we denote $D_H \varphi$ as the horizontal gradient of  $\varphi$, that is,
$$ D_H \varphi=(X_1 \varphi,\dots,X_N \varphi,Y_1 \varphi,\dots,Y_N \varphi), $$ where $\{X_j, Y_j\}_{j=1}^{N}$ is the standard basis of the horizontal left-invariant vector fields on $\mathbb{H}^N$ with
$$X_j=\frac{\partial}{\partial x_j}+2y_j \frac{\partial}{\partial t},~~ Y_j=\frac{\partial}{\partial y_j}-2x_j \frac{\partial}{\partial t}~~\text{for}~j=1,\dots,N. $$ The nonlinearity $f:\mathbb{H}^N\times \mathbb{R}\to \mathbb{R} $ has critical exponential growth at infinity, i.e., it behaves like $\exp(\alpha|u|^{\frac{Q}{Q-1}})$ when $|u|\to\infty$ for some $\alpha>0$, which means there exists a positive constant $\alpha_0$ such that the following condition holds:
$$\lim_{|u|\to \infty} |f(\xi,u)| \text{exp}(-\alpha|u|^{\frac{Q}{Q-1}})=\begin{cases}
    0~~~~~\text{if}~\alpha>\alpha_0\\
    +\infty~\text{if}~\alpha<\alpha_0
\end{cases},~ \text{uniformly with respect to} ~\xi\in \mathbb{H}^N.$$
Throughout the paper, we have the following assumptions on the nonlinearity $f:\mathbb{H}^N\times \mathbb{R}\to \mathbb{R} $.
\begin{itemize}
    \item [(f1)]
    $f\in\mathcal{C}^1(\mathbb{H}^N\times \mathbb{R},\mathbb{R})$ such that $f(\cdot,u)=0$ for all $u\leq 0$ and $f(\cdot,u)>0$ for all $u>0$. Further, there exists constant $\alpha_0\in(0,\alpha)$ with the property that for all $\varepsilon>0$, there exists $C_\varepsilon>0$ such that 
    \begin{align*}
     |f(\xi,u)|\leq \varepsilon |u|^{Q-1}+C_\varepsilon \Phi(\alpha_0 |u|^{Q^\prime}),~\forall~(\xi,u)\in \mathbb{H}^N\times \mathbb{R},~\text{where}~\Phi(t)=\exp(t)-\sum_{j=0}^{Q-2}\frac{t^j}{j!}~\text{and}~Q^\prime=\frac{Q}{Q-1};   
    \end{align*}
    \item[(f2)] there exists $\sigma>Q$ such that
    $0<\sigma  F(\xi,u)\leq 2u f(\xi,u)$ for a.e. $\xi\in \mathbb{H}^N$ and any $u>0$, where $F$ is the primitive of $f$ and defined by $F(\xi,u)=\displaystyle\int_{0}^{u}f(\xi,\tau)~\mathrm{d}\tau$,~  $\forall~(\xi,u)\in \mathbb{H}^N\times \mathbb{R}$;
    \item[(f3)] $\partial_u f(\cdot,u)$  exists and $\partial_u f(\cdot,u)=0$ for all $u\leq 0$. Consequently, there exists constant $\alpha_0\in(0,\alpha)$ with the property that for all $\varepsilon>0$, there exists $C_\varepsilon>0$ such that 
    $$|\partial_u f(\xi,u) u|\leq \varepsilon |u|^{Q-1}+C_\varepsilon \Phi(\alpha_0 |u|^{Q^\prime}),~\forall~(\xi,u)\in \mathbb{H}^N\times \mathbb{R};$$
    \item[(f4)] there exists $\upsilon>Q$ and a constant $\widehat{C}>0$ such that $$F(\xi,u)\geq \frac{2\widehat{C}}{\upsilon}u^\upsilon	,~\forall~(\xi,u)\in \mathbb{H}^N\times [0,\infty). $$
\end{itemize}
\begin{remark}
A typical example of a function satisfying $(f1)-(f4)$ can be considered as $f(\xi,u)=(u^+)^{Q-1}\Phi\big((u^+)^{Q^{\prime}}\big),$ $\forall~(\xi,u)\in  \mathbb{H}^N\times \mathbb{R}$ with $\alpha_0>1$, where $Q=2N+2$, $Q^{\prime}=\frac{Q}{Q-1}$, $u^+=\max\{u,0\}$ and $\Phi$ is defined as in $(f1)$.
\end{remark}

Over the last decade, analysis of PDEs in the Heisenberg group has gained much attention from many researchers. Of all the noncommutative nilpotent Lie groups, the Heisenberg group is the most straightforward example. The Heisenberg group have particularly played an important role in quantum physics, ergodic theory, representation theory of nilpotent Lie groups, harmonic analysis, differential geometry, several complex variables and CR geometry. Due to the analytical non-Euclidean nature of this space despite its topological Euclidean nature, certain fundamental concepts of analysis, including dilatations, must be developed again ( see \cite{MR4029564}). Folland and Stein \cite{MR0367477} were the pioneering mathematicians who initiated the research of subelliptic analysis on the Heisenberg group. Later, Rothschild and Stein made significant advancements in generalising these results to suit the criteria of Hörmander for vector fields. Furthermore, we also refer the interested readers to study \cite{MR4289241, MR0344700, MR0657581, MR3797740, MR4475683} for more advanced properties of elliptic operators on the Heisenberg group.

In the context of the Heisenberg group, to familiarize the reader with the borderline case of the Sobolev embedding, which is commonly known as the Trudinger-Moser case, we first recall that for $p<Q$, by the classical Sobolev embedding results and the notations used in \eqref{eq2.999}, we have $HW^{1,p}(\mathbb{H}^N) \hookrightarrow L^q (\mathbb{H}^N)$ for $p\leq q\leq p^\ast $, where $p^\ast=\frac{Qp}{Q-p}$, called critical Sobolev exponent. Due to these facts, to study the variational problems with subcritical and critical growth, we mean that the nonlinearity cannot exceed the polynomial of degree $p^\ast$. Further, for the limiting case $p=Q$, we also have $HW^{1,Q}(\mathbb{H}^N) \hookrightarrow L^s (\mathbb{H}^N)$ for $Q\leq s<\infty $ but we cannot take $s=\infty$ for such an embedding. Now, it is fairly natural to ask `` Is there another kind of maximal growth in this situation''? This question was answered by
N. Lama et al. in \cite{MR2980499}. The answer to this question is that the exponential growth can be admitted in the borderline case.

It is worthwhile to mention that there has been a lot of advancement in the Trudinger-Moser-type inequalities to study the existence, nonexistence and multiplicity of solutions of the nonlinear PDEs in the whole Euclidean framework as well as in the Heisenberg group framework. For a detailed study, one can go through \cite{MR3073316, MR2980499, MR2927116, MR2976058, MR3130507, MR3797740, MR4322564,  MR2669653, MR2863858, MR4451907,MR4183246} and references therein. 

Define the functional of the form 
$$ u\mapsto\int_{\mathbb{R}^N} \big(|\nabla u|^p+|\nabla u|^q\big)~\mathrm{d}x, $$
where $1<p<q$. Such types of functionals fall in the realm of the so-called functionals with nonstandard growth conditions of $(p,q)$-type. To the best of our knowledge, the study of this kind of unbalanced integral functionals was first introduced by P. Marcellini in his celebrated research papers \cite{MR1094446, MR0868523}. Since, then elliptic problems involving $(p,q)$-Laplace operator have been extensively studied by many authors due to the broad applications in biophysics, plasma physics, solid state physics, and chemical reaction design (see  \cite{ MR0901723, MR1003258, MR1313718}). In this regard, we refer to  \cite{MR3886584, MR2524439,  MR4029564} and references therein. Despite this, in the borderline case ( $q=Q~(\text{or}~N)$), there are very few contributions in the literature driven by $(p, Q)~(\text{or}~(p, N))$-Laplace operator in the sense Trudinger-Moser nonlinearity, which is one of the key motivation towards studying this paper.  In that context, we want to mention \cite{MR4183246, MR4451907, MR4258779, MR4149293, MR4289110} and related references. 

Recently, the study of nonlinear PDEs involving Hartee-type nonlinearity, commonly known as Choquard-type nonlinearity, which is driven by the classical Hardy-Littlewood-Sobolev inequality ( HLS inequality in short ) \cite{MR1817225} has attracted a lot of researchers due to its wide range of applications in physics such as the Bose-Einstein condensation and the self-gravitational collapse of a quantum mechanical wave function. Initially, S.I.Pekar \cite{pekar1954untersuchungen} used the Hartree-Fock theory to explain the quantum mechanics of a polaron that was at rest. Moreover, P.Choquard \cite{MR0471785} described the model of an electron trapped in its own hole using such kind of nonlinearity. For more details on the study of Choquard-type equations using variational methods, we refer to \cite{MR3356947, MR3804260} and the related references. Further, in the context of the Heisenberg group, for the study of Choquard-type equations, we refer to \cite{MR4475683, MR4060090}. 

On the other hand, nonlinear problems driven by the doubly weighted Hardy- Littlewood-Sobolev inequality (also called Stein-Weiss type inequality) \cite{MR0098285} with critical exponential nonlinearity are very limited in the current literature. For instance, we refer the interested readers to study \cite{MR4062337, MR4683005, MR4564507, MR4626503}. This is another key motivation towards the study of this paper.  

The main features and novelty of this paper are listed as follows: $(a)$ it is worth noting that, due to the unbounded nature of the domain, the nonhomogeneity of the leading operator in \eqref{main problem}, and the lack of compactness of the solution space $\mathbf{X}$ into some Lebesgue space $L^s(\mathbb{H}^N)$, we have some additional difficulty in proving the compactness property of the Palais-Smale sequence, and $(b)$ the appearance of the Stein-Weiss convolution term  under critical exponential nonlinearity in \eqref{main problem} makes our analysis more challenging because for $u_n\rightharpoonup u$ weakly in $\mathbf{X}$ as $n\to\infty$, then we have to verify 
    \begin{align}\label{eq1.25}
    \lim_{n\to\infty} \int_{\mathbb{H}^N}\Bigg(\displaystyle\int_{\mathbb{H}^N}\frac{F(\eta,u_n)}{r(\eta)^\beta {d_K(\xi,\eta)}^\lambda}\mathrm{d}\eta\Bigg)\frac{f(\xi,u_n)v}{r(\xi)^\beta} ~\mathrm{d}\xi=\int_{\mathbb{H}^N}\Bigg(\displaystyle\int_{\mathbb{H}^N}\frac{F(\eta,u)}{r(\eta)^\beta {d_K(\xi,\eta)}^\lambda}\mathrm{d}\eta\Bigg)\frac{f(\xi,u)v}{r(\xi)^\beta}~ \mathrm{d}\xi,\forall~v\in \mathbf{X},
\end{align}
 to confirm that the weak limit  $u$ in the above sequence is the solution to \eqref{main problem}. However, in the case of exponential growth, \eqref{eq1.25} may not hold even we have $u_n\to u$ in $L^\theta(B_R)$ for any $R>0$ and $\theta\in[1,\infty)$ as $n\to\infty$. To prove this we need  Theorem \ref{thm2.7} with the critical exponent $\alpha_0 $ to be less than $\alpha_Q$.

 Motivated by all the aforementioned works, especially by \cite{MR4451907, MR4258779, MR4062337}, we study for the first time in literature the existence of nontrivial nonnegative solutions for \eqref{main problem} involving the Stein-Weiss reaction under critical exponential growth in the Heisenberg group context. Before we state our main results, we first define the weak solution of \eqref{main problem}.
\begin{definition}
    We say that $u\in\mathbf{X}$ (see \eqref{eq2.4}) is a weak solution for \eqref{main problem} if for all $v\in\mathbf{X}$, we have 
$$\big\langle{u,v}\big\rangle_{H,p}+\big\langle{u,v}\big\rangle_{H,Q}=\mu\displaystyle\int_{\mathbb{H}^N} g(\xi)|u|^{s-2}uv~\mathrm{d}\xi+\gamma \displaystyle\int_{\mathbb{H}^N}\Bigg(\displaystyle\int_{\mathbb{H}^N}\frac{F(\eta,u)}{r(\eta)^\beta {d_K(\xi,\eta)}^\lambda}~\mathrm{d}\eta\Bigg)\frac{f(\xi,u)}{r(\xi)^\beta} v~\mathrm{d}\xi,$$
where for $t\in\{p,Q\}$, we define
\begin{align}\label{eq1.1}
\big\langle{u,v}\big\rangle_{H,t}=\int_{\mathbb{H}^N}\Big(|D_H u|_H^{t-2}(D_H u,D_H v)_H+|u|^{t-2}uv\Big)~\mathrm{d}\xi.    
\end{align}
\end{definition}
The main results of this paper can be stated as follows.
\begin{theorem}\label{thm1.1}
 Assume that $1<s<p<Q<\infty$ and $g$ is in $L^\vartheta(\mathbb{R}^N)$ with $\vartheta=\frac{Q}{Q-s}$. Let $(f1)$ and $(f2)$are hold, then there exists $\widehat{\mu}>0$ such that \eqref{main problem} admits at least one nontrivial nonnegative solution $u_{\mu,\gamma}$ in $\mathbf{X}$ for all $\mu\in (0,\widehat{\mu})$ and for all $\gamma>0$. Consequently, there holds $$\lim_{\mu\to 0^+}\|u_{\mu,\gamma}\|_{\mathbf{X}}=0. $$
\end{theorem}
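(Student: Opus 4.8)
\emph{Proof proposal.} The aim is to realize $u_{\mu,\gamma}$ as a local minimizer, with strictly negative energy, of the energy functional associated to \eqref{main problem}, via the Ekeland variational principle. Since $f(\cdot,u)=0=F(\cdot,u)$ for $u\le 0$, it is convenient to also replace $|u|^{s-2}u$ by $(u^+)^{s-1}$ and to work with
\[
\mathcal{J}_{\mu,\gamma}(u)=\tfrac1p\|u\|_{H,p}^p+\tfrac1Q\|u\|_{H,Q}^Q-\tfrac{\mu}{s}\int_{\mathbb{H}^N}g(\xi)(u^+)^s\,\mathrm{d}\xi-\tfrac{\gamma}{2}\int_{\mathbb{H}^N}\left(\int_{\mathbb{H}^N}\frac{F(\eta,u)}{r(\eta)^\beta d_K(\xi,\eta)^\lambda}\,\mathrm{d}\eta\right)\frac{F(\xi,u)}{r(\xi)^\beta}\,\mathrm{d}\xi,
\]
where $\|u\|_{H,t}^t=\langle u,u\rangle_{H,t}$ as in \eqref{eq1.1}. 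Using $(f1)$, the Trudinger--Moser inequality of Theorem~\ref{thm2.7} and the doubly weighted Hardy--Littlewood--Sobolev (Stein--Weiss) inequality, one checks that $\mathcal{J}_{\mu,\gamma}$ is well defined and of class $\mathcal{C}^1$ on $\mathbf{X}$, and that its critical points are weak solutions of \eqref{main problem} with $|u|^{s-2}u$ replaced by $(u^+)^{s-1}$. Testing $\mathcal{J}_{\mu,\gamma}'(u)$ at such a critical point $u$ against $u^-=\max\{-u,0\}$ and using $(u^+)^{s-1}u^-\equiv 0$, $f(\xi,u)u^-\equiv 0$ and $\langle u,u^-\rangle_{H,t}=-\|u^-\|_{H,t}^t$ gives $\|u^-\|_{H,p}^p+\|u^-\|_{H,Q}^Q=0$; hence every critical point of $\mathcal{J}_{\mu,\gamma}$ is nonnegative, and therefore a weak solution of \eqref{main problem}.

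Next I establish the localized geometry. By Hölder (the conjugate exponent of $\vartheta=\frac{Q}{Q-s}$ is $\frac{Q}{s}$) and the Sobolev embedding, $\int_{\mathbb{H}^N}g(u^+)^s\le C\|u\|_{\mathbf{X}}^s$; and for $\|u\|_{\mathbf{X}}=\rho<1$ small, $(f1)$ together with Stein--Weiss and the sub-critical Trudinger--Moser estimate bound the Choquard term by $C\gamma\rho^{2Q}$, while $\|u\|_{H,p}^p+\|u\|_{H,Q}^Q\ge c_0\rho^{Q}$ (here $1<s<p<Q$ enters). Thus, on $\partial B_\rho$,
\[
\mathcal{J}_{\mu,\gamma}(u)\ \ge\ \rho^s\Big(c_0\,\rho^{Q-s}-C\gamma\,\rho^{2Q-s}-\tfrac{C}{s}\,\mu\Big).
\]
Fixing $\rho>0$ (possibly depending on $\gamma$) small enough that $c_0\rho^{Q-s}-C\gamma\rho^{2Q-s}>0$ and $\alpha_0\rho^{Q^\prime}<\alpha_Q$, and then $\widehat\mu>0$ so small that $\tfrac{C}{s}\mu<c_0\rho^{Q-s}-C\gamma\rho^{2Q-s}$ for $\mu\in(0,\widehat\mu)$, we obtain $\mathcal{J}_{\mu,\gamma}\ge 0$ on $\partial B_\rho$. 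On the other hand, for a fixed $0\le\varphi\in\mathbf{X}\setminus\{0\}$ with $\int_{\mathbb{H}^N}g\varphi^s>0$ and $t\downarrow 0$, the sublinear term dominates, so $\mathcal{J}_{\mu,\gamma}(t\varphi)<0$; hence $c_{\mu,\gamma}:=\inf_{\overline{B_\rho}}\mathcal{J}_{\mu,\gamma}\in(-\infty,0)$ and $c_{\mu,\gamma}<\inf_{\partial B_\rho}\mathcal{J}_{\mu,\gamma}$. The Ekeland variational principle then yields a Palais--Smale sequence $(u_n)\subset B_\rho$ at the level $c_{\mu,\gamma}$.

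The heart of the matter is the compactness of $(u_n)$. Being bounded, $u_n\rightharpoonup u_{\mu,\gamma}$ in $\mathbf{X}$, $u_n\to u_{\mu,\gamma}$ in $L^\theta(B_R)$ for all $R>0$ and $\theta\in[1,\infty)$, and a.e. in $\mathbb{H}^N$ (up to a subsequence). The sublinear term is compact because $g\in L^\vartheta$ with $\vartheta=\frac{Q}{Q-s}$ yields the compact embedding $\mathbf{X}\hookrightarrow L^s(\mathbb{H}^N,g\,\mathrm{d}\xi)$: splitting $\mathbb{H}^N=B_R\cup B_R^c$, local strong convergence handles $B_R$ and the smallness of $\|g\|_{L^\vartheta(B_R^c)}$ handles the complement --- this is precisely where the exponent of $g$ is used. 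For the Choquard contribution one invokes the sub-critical bound $\alpha_0\rho^{Q^\prime}<\alpha_Q$ (so that $\Phi(\alpha_0|u_n|^{Q^\prime})$ is equi-integrable and no exponential concentration survives), the Stein--Weiss inequality, and the new Brézis--Lieb type lemma for the Choquard nonlinearity, to pass to the limit as in \eqref{eq1.25} and, similarly, in the Choquard energy. Consequently (after subtracting the sublinear and Choquard contributions of $\mathcal{J}_{\mu,\gamma}'(u_n)[u_n-u_{\mu,\gamma}]$, which tend to $0$) one gets $\langle u_n,u_n-u_{\mu,\gamma}\rangle_{H,p}+\langle u_n,u_n-u_{\mu,\gamma}\rangle_{H,Q}\to 0$, and the $(S_+)$-property of the $(p,Q)$-operator forces $u_n\to u_{\mu,\gamma}$ strongly in $\mathbf{X}$. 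Passing to the limit, $\mathcal{J}_{\mu,\gamma}(u_{\mu,\gamma})=c_{\mu,\gamma}<0$ and $\mathcal{J}_{\mu,\gamma}'(u_{\mu,\gamma})=0$; in particular $u_{\mu,\gamma}\neq 0$, so by the first paragraph it is a nontrivial nonnegative weak solution of \eqref{main problem}. I expect this compactness step to be the main obstacle: since the weights $r(\cdot)^{-\beta}$ do not decay at infinity, loss of mass at infinity and exponential concentration must be ruled out simultaneously, and it is exactly the combination of $\alpha_0<\alpha_Q$, the Stein--Weiss inequality and the tailor-made Brézis--Lieb lemma that makes this possible.

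Finally, for the asymptotics: since $\langle\mathcal{J}_{\mu,\gamma}'(u_{\mu,\gamma}),u_{\mu,\gamma}\rangle=0$ and $\mathcal{J}_{\mu,\gamma}(u_{\mu,\gamma})<0$, forming $\mathcal{J}_{\mu,\gamma}(u_{\mu,\gamma})-\tfrac1\sigma\langle\mathcal{J}_{\mu,\gamma}'(u_{\mu,\gamma}),u_{\mu,\gamma}\rangle$ and using the pointwise inequality $(f2)$, namely $\sigma F(\xi,u_{\mu,\gamma})\le 2u_{\mu,\gamma}f(\xi,u_{\mu,\gamma})$, inside the Choquard energy, yields
\[
\Big(\tfrac1p-\tfrac1\sigma\Big)\|u_{\mu,\gamma}\|_{H,p}^p+\Big(\tfrac1Q-\tfrac1\sigma\Big)\|u_{\mu,\gamma}\|_{H,Q}^Q\ <\ \Big(\tfrac1s-\tfrac1\sigma\Big)\mu\int_{\mathbb{H}^N}g(\xi)|u_{\mu,\gamma}|^s\,\mathrm{d}\xi\ \le\ C\mu\,\|u_{\mu,\gamma}\|_{\mathbf{X}}^s.
\]
Since $\sigma>Q>p>s$, all coefficients on the left-hand side are positive, and since $\|u_{\mu,\gamma}\|_{\mathbf{X}}\le\rho<1$ the left-hand side is bounded below by $c\|u_{\mu,\gamma}\|_{\mathbf{X}}^{Q}$; hence $\|u_{\mu,\gamma}\|_{\mathbf{X}}^{Q-s}\le C'\mu$, which gives $\|u_{\mu,\gamma}\|_{\mathbf{X}}\to 0$ as $\mu\to 0^+$.
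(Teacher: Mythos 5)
Your overall scheme (truncating to $(u^+)$, nonnegativity by testing with $u^-$, Ekeland's principle on a small ball $\overline{B}_\rho$, Trudinger--Moser subcriticality, Stein--Weiss bounds) coincides with the paper's up to the compactness step, and securing the subcritical bound directly from $\|u_n\|_{\mathbf{X}}<\rho\leq 1$ instead of through the extra threshold $\mu^0$ of \eqref{eq3.33} is a legitimate simplification. The genuine gap is the sentence claiming that the Choquard contribution of $\mathcal{J}_{\mu,\gamma}^\prime(u_n)[u_n-u_{\mu,\gamma}]$ tends to $0$, which you then feed into the $(S_+)$ property. The sublinear part does vanish (compactness of $\mathbf{X}\hookrightarrow L^s_g(\mathbb{H}^N)$), but for the Choquard part the Stein--Weiss inequality only gives a bound by $\|F(\cdot,u_n)\|_{\boldsymbol{t}}\,\|f(\cdot,u_n)(u_n-u_{\mu,\gamma})\|_{\boldsymbol{t}}$ with $\boldsymbol{t}=\frac{2Q}{2Q-2\beta-\lambda}$, and $\|f(\cdot,u_n)(u_n-u_{\mu,\gamma})\|_{\boldsymbol{t}}$ is controlled by \emph{global} Lebesgue norms of $u_n-u_{\mu,\gamma}$ on $\mathbb{H}^N$, which need not go to zero: no embedding of $\mathbf{X}$ into $L^q(\mathbb{H}^N)$ is compact, and mass may escape to infinity. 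The Br\'ezis--Lieb lemma (Proposition \ref{prop4.3}) only yields the splitting $\mathcal{G}(u_n)-\mathcal{G}(u_n-u_{\mu,\gamma})-\mathcal{G}(u_{\mu,\gamma})=o_n(1)$, not $\mathcal{G}(u_n-u_{\mu,\gamma})\to 0$; indeed, where the paper really needs global strong convergence (the second solution, Lemma \ref{lem4.5}), it must work quantitatively from inequality \eqref{eq4.28}, with a restriction on the level and on $\gamma$, to force the residual $L$ to vanish. For Theorem \ref{thm1.1}, where $\gamma>0$ is arbitrary, no such input is available, so your strong convergence $u_n\to u_{\mu,\gamma}$ in $\mathbf{X}$, and with it the identity $\mathcal{J}_{\mu,\gamma}(u_{\mu,\gamma})=c_{\mu,\gamma}<0$ that delivers nontriviality, is not justified as written.

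The paper circumvents exactly this point by never proving strong convergence of the Ekeland sequence. It passes to the limit in $J^\prime(u_n)$ only against $v\in\mathcal{C}^\infty_c(\mathbb{H}^N)$ (Proposition \ref{prop3.5}, combined with the local Simon-inequality argument of Proposition \ref{prop3.6} giving a.e. convergence of the horizontal gradients and weak convergence of $|D_Hu_n|_H^{t-2}D_Hu_n$), and then upgrades to all $\varphi\in\mathbf{X}$ by a cut-off/mollifier density argument, so that $J^\prime(u_{\mu,\gamma})=0$. Nontriviality is recovered without strong convergence: writing $m_{\mu,\gamma}=\lim_n\big(J(u_n)-\tfrac{1}{\sigma}\langle J^\prime(u_n),u_n\rangle\big)$, using weak lower semicontinuity of the norms, the convergence of the weighted $L^s_g$ term, and Fatou's lemma applied to the nonnegative integrand $2f(\xi,u)u-\sigma F(\xi,u)$ from $(f2)$, one obtains $m_{\mu,\gamma}\geq J(u_{\mu,\gamma})-\tfrac{1}{\sigma}\langle J^\prime(u_{\mu,\gamma}),u_{\mu,\gamma}\rangle=J(u_{\mu,\gamma})\geq m_{\mu,\gamma}$, hence $J(u_{\mu,\gamma})=m_{\mu,\gamma}<0$ and $u_{\mu,\gamma}\neq 0$. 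If you replace your $(S_+)$ step by this argument, the rest of your proposal, including the final asymptotics $\|u_{\mu,\gamma}\|_{\mathbf{X}}\to 0$ as $\mu\to 0^+$ (which is essentially the paper's computation), goes through.
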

\begin{theorem}\label{thm1.2}
Let all the assumptions of Theorem \ref{thm1.1} be satisfied. Additionally, if $(f3)$ and $(f4)$ are hold, then there exists $\gamma^\ast>0$ such that for all $\gamma>\gamma^\ast$ there exists $\widetilde{\mu}=\widetilde{\mu}(\gamma)>0$ with the property that \eqref{main problem} admits a nontrivial nonnegative solution $w_{\mu,\gamma}$ for all $\mu\in (0,\widetilde{\mu}]$. Consequently, if $\mu<\textit{min}\{\widehat{\mu},\widetilde{\mu}\}$, then $w_{\mu,\gamma}$ is a second solution of \eqref{main problem} which is independent of $u_{\mu,\gamma}$ constructed in Theorem \ref{thm1.1}.  
\end{theorem}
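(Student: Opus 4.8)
The plan is to obtain the second solution $w_{\mu,\gamma}$ as a mountain pass critical point of the energy functional associated with \eqref{main problem}, namely, on $\mathbf{X}$,
\[
J_{\mu,\gamma}(u)=\frac1p\langle u,u\rangle_{H,p}+\frac1Q\langle u,u\rangle_{H,Q}-\frac{\mu}{s}\int_{\mathbb{H}^N} g(\xi)(u^+)^s\,\mathrm{d}\xi-\frac{\gamma}{2}\int_{\mathbb{H}^N}\Bigg(\int_{\mathbb{H}^N}\frac{F(\eta,u)}{r(\eta)^\beta d_K(\xi,\eta)^\lambda}\,\mathrm{d}\eta\Bigg)\frac{F(\xi,u)}{r(\xi)^\beta}\,\mathrm{d}\xi ,
\]
where replacing $|u|^{s-2}u$ by $(u^+)^{s-1}$ is harmless because $f(\cdot,u)\equiv0$ for $u\le0$, so that the critical points of $J_{\mu,\gamma}$ are exactly the nonnegative weak solutions of \eqref{main problem}. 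From the minimization scheme behind Theorem \ref{thm1.1} the first solution $u_{\mu,\gamma}$ is a local minimizer of $J_{\mu,\gamma}$ with $J_{\mu,\gamma}(u_{\mu,\gamma})<0$; hence it suffices to produce a second critical point at a \emph{strictly positive} level, which is then automatically nontrivial and distinct from $u_{\mu,\gamma}$.

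First I would check the mountain pass geometry and estimate the min--max level. Near the origin, by the doubly weighted Hardy--Littlewood--Sobolev (Stein--Weiss) inequality, $(f1)$, the embeddings $\mathbf{X}\hookrightarrow L^{q}(\mathbb{H}^N)$ for $q\ge Q$, and the Trudinger--Moser inequality (Theorem \ref{thm2.7}), the Stein--Weiss energy term is $O(\gamma\|u\|_{\mathbf{X}}^{2Q})$ while $\int_{\mathbb{H}^N} g(u^+)^s\le\|g\|_{L^\vartheta}\|u\|_{L^Q}^s\le C\|u\|_{\mathbf{X}}^s$; since $s<p<Q<2Q$, one can pick $\rho=\rho(\gamma)>0$ small and then $\widetilde{\mu}=\widetilde{\mu}(\gamma)>0$ small so that $J_{\mu,\gamma}\ge\delta>0$ on $\{\|u\|_{\mathbf{X}}=\rho\}$ for all $\mu\in(0,\widetilde{\mu}]$ --- the barrier radius $\rho$ must shrink as $\gamma$ grows (the $\gamma$-term eats into the barrier), which is exactly why $\widetilde{\mu}$ depends on $\gamma$. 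Fixing a nonnegative $v\in\mathcal{C}^\infty_c(\mathbb{H}^N)\setminus\{0\}$ and using $(f4)$, $F(\xi,tv)\ge\frac{2\widehat{C}}{\upsilon}t^\upsilon v^\upsilon$, gives
\[
J_{\mu,\gamma}(tv)\le \frac{t^p}{p}\langle v,v\rangle_{H,p}+\frac{t^Q}{Q}\langle v,v\rangle_{H,Q}-\frac{\gamma K}{2}\,t^{2\upsilon},\qquad K:=\Big(\tfrac{2\widehat{C}}{\upsilon}\Big)^{2}\!\int_{\mathbb{H}^N}\!\int_{\mathbb{H}^N}\frac{v(\eta)^\upsilon v(\xi)^\upsilon}{r(\eta)^\beta r(\xi)^\beta d_K(\xi,\eta)^\lambda}\,\mathrm{d}\eta\,\mathrm{d}\xi>0 ,
\]
and since $2\upsilon>Q$ the right side tends to $-\infty$ as $t\to+\infty$; choose $e=T_\gamma v$ with $\|e\|_{\mathbf{X}}>\rho$ and $J_{\mu,\gamma}(e)\le0$. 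Then the min--max level $c_{\mu,\gamma}:=\inf_{\pi\in\Gamma}\max_{t\in[0,1]}J_{\mu,\gamma}(\pi(t))$, over $\Gamma=\{\pi\in\mathcal{C}([0,1],\mathbf{X}):\pi(0)=0,\ \pi(1)=e\}$, satisfies $c_{\mu,\gamma}\ge\delta>0$, while the segment joining $0$ to $e$ and the previous display yield $c_{\mu,\gamma}\le\max_{t\ge0}\big[\frac{t^p}{p}\langle v,v\rangle_{H,p}+\frac{t^Q}{Q}\langle v,v\rangle_{H,Q}-\frac{\gamma K}{2}t^{2\upsilon}\big]$; an elementary one-variable optimization (balancing $t^Q$ against $\gamma t^{2\upsilon}$, $2\upsilon>Q$) shows this maximum $\to0$ as $\gamma\to+\infty$, so there is $\gamma^\ast>0$ with $c_{\mu,\gamma}<\Lambda$ for all $\gamma>\gamma^\ast$ and all $\mu$, where $\Lambda>0$ is the Trudinger--Moser compactness threshold (governed by the sharp constant $\alpha_Q$) below which Palais--Smale sequences of $J_{\mu,\gamma}$ are relatively compact --- here the hypothesis $\alpha_0<\alpha_Q$ is essential.

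The heart of the proof is this relative compactness. Let $\{w_n\}\subset\mathbf{X}$ be a Palais--Smale sequence for $J_{\mu,\gamma}$ at the level $c_{\mu,\gamma}\in(0,\Lambda)$ supplied by the mountain pass theorem. By $(f2)$ the Stein--Weiss contribution to $J_{\mu,\gamma}(w_n)-\frac1\sigma\langle J_{\mu,\gamma}'(w_n),w_n\rangle$ is nonnegative, so together with $s<p<Q$ the sequence is bounded in $\mathbf{X}$; pass to $w_n\rightharpoonup w$ in $\mathbf{X}$, $w_n\to w$ in $L^q_{\mathrm{loc}}(\mathbb{H}^N)$ and a.e. From $J_{\mu,\gamma}(w_n)\to c_{\mu,\gamma}<\Lambda$, $\langle J_{\mu,\gamma}'(w_n),w_n\rangle\to0$ and $(f2)$ one extracts $\limsup_n\|w_n\|_{HW^{1,Q}}^{Q^\prime}$ strictly below the Trudinger--Moser threshold, whence (Theorem \ref{thm2.7} plus a Stein--Weiss exponent slightly larger than $1$) $\sup_n\int_{\mathbb{H}^N}\Phi(\alpha_0|w_n|^{Q^\prime})^{q}\,\mathrm{d}\xi<\infty$ for some $q>1$. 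This uniform exponential integrability, the doubly weighted HLS inequality and Theorem \ref{thm2.7} allow passage to the limit in $\langle J_{\mu,\gamma}'(w_n),\varphi\rangle=o(1)$: the convergence of the convolution term is precisely \eqref{eq1.25}, the term $\mu\int_{\mathbb{H}^N} g|w_n|^{s-2}w_n\varphi$ converges by the compactness induced by $g\in L^\vartheta(\mathbb{H}^N)$, and the $(p,Q)$-Laplacian terms converge by the $(S_+)$ property, so $J_{\mu,\gamma}'(w)=0$. Finally, with $v_n=w_n-w$, the new Br\'ezis--Lieb type lemma for the Choquard nonlinearity (its proof using the growth bounds $(f1)$ and $(f3)$), the classical Br\'ezis--Lieb splitting of $\langle\cdot,\cdot\rangle_{H,p}$ and $\langle\cdot,\cdot\rangle_{H,Q}$, and $\langle J_{\mu,\gamma}'(w_n),w_n-w\rangle\to0=\langle J_{\mu,\gamma}'(w),w_n-w\rangle$ together yield $\|v_n\|_{\mathbf{X}}\to0$; hence $w_n\to w$ strongly and $J_{\mu,\gamma}(w)=c_{\mu,\gamma}$.

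Setting $w_{\mu,\gamma}:=w$, we have $J_{\mu,\gamma}(w_{\mu,\gamma})=c_{\mu,\gamma}>0=J_{\mu,\gamma}(0)$, so $w_{\mu,\gamma}\not\equiv0$; testing $J_{\mu,\gamma}'(w_{\mu,\gamma})$ with $(w_{\mu,\gamma})^-$ and using $f(\cdot,u)\equiv0$ for $u\le0$ forces $(w_{\mu,\gamma})^-=0$, so $w_{\mu,\gamma}\ge0$; and for $\mu<\min\{\widehat{\mu},\widetilde{\mu}(\gamma)\}$ both $u_{\mu,\gamma}$ (Theorem \ref{thm1.1}) and $w_{\mu,\gamma}$ exist, with $J_{\mu,\gamma}(w_{\mu,\gamma})=c_{\mu,\gamma}>0>J_{\mu,\gamma}(u_{\mu,\gamma})$, hence $w_{\mu,\gamma}\ne u_{\mu,\gamma}$. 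The step I expect to be the main obstacle is the relative compactness --- above all verifying \eqref{eq1.25} for the Stein--Weiss convolution under critical exponential growth, which rests on the strict inequality $c_{\mu,\gamma}<\Lambda$ (hence on $\alpha_0<\alpha_Q$ and on $\gamma>\gamma^\ast$) to secure the uniform exponential integrability via Theorem \ref{thm2.7}, together with the new Br\'ezis--Lieb lemma needed to pass from weak to strong convergence; calibrating the single threshold $\Lambda$ so that it simultaneously bounds the min--max level and underpins the Trudinger--Moser estimate is the delicate balance of the whole argument.
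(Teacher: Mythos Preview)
Your strategy is the same as the paper's: mountain pass geometry (Lemmas \ref{lem3.2}--\ref{lem3.3}), a level estimate via $(f4)$ that forces $c_{\mu,\gamma}$ below a compactness threshold for $\gamma$ large (Lemma \ref{lem4.6}), boundedness of Palais--Smale sequences from $(f2)$ (Lemma \ref{lem4.1}), a Br\'ezis--Lieb lemma for the Stein--Weiss term (Proposition \ref{prop4.3}), and a compactness lemma (Lemma \ref{lem4.5}) yielding the second solution with $J(w_{\mu,\gamma})=c_{\mu,\gamma}>0>J(u_{\mu,\gamma})$.

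Two points where your sketch is looser than the paper and where a referee would press you. First, the threshold is \emph{not} a single number $\Lambda$: in the paper the admissible range is $c<c_0-\Theta\mu^{\vartheta}$ with $c_0$ chosen in a $\gamma$-dependent window \eqref{eq4.17}--\eqref{eq4.21}, and it is this $\gamma$-dependence (not the barrier radius) that forces $\widetilde\mu=\widetilde\mu(\gamma)$ in Lemma \ref{lem4.6}. The Trudinger--Moser ceiling $\boldsymbol{\kappa}\alpha_Q^{Q-1}$ alone is not enough: the paper needs the sharper bound $c_0<\boldsymbol{\kappa}\big(\boldsymbol{\wp}\widetilde C_2\gamma\,\mathcal S^{-\nu}\big)^{Q/(Q-\nu)}$ from \eqref{eq4.21}. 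Second, your line ``Br\'ezis--Lieb $+\;(S_+)\Rightarrow\|v_n\|_{\mathbf X}\to0$'' hides the actual mechanism. After the splitting one arrives at $\|v_n\|_{HW^{1,p}}^{p}+\|v_n\|_{HW^{1,Q}}^{Q}\le \gamma\,\mathcal G(v_n)+o(1)$ (this is \eqref{eq4.28}); to close the loop the paper estimates $\mathcal G(v_n)$ via \eqref{eq4.30}--\eqref{eq4.31}, sets $L=\lim\|v_n\|_{L^{2Q\nu\zeta/(2Q-2\beta-\lambda)}}$, and rules out $L>0$ by a contradiction with the $\gamma$-dependent threshold \eqref{eq4.21}. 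Without that quantitative step the inequality $\|v_n\|^Q\le \gamma\,\mathcal G(v_n)+o(1)$ does not by itself force $v_n\to0$, since for the critical exponential nonlinearity $\mathcal G(v_n)$ need not be $o(\|v_n\|^Q)$. Your closing remark that ``calibrating the single threshold $\Lambda$'' is the delicate balance is exactly right; the paper's calibration is \eqref{eq4.17}--\eqref{eq4.21} and the definition \eqref{eq4.999} of $\gamma^\ast$, and you should make that dependence explicit.
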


The paper is structured in the following way. In Section \ref{sec2}, we present some basic definitions and properties of the Heisenberg group $\mathbb{H}^N$, as well as the suitable function spaces, and some technical lemmas are also recalled. In Section \ref{sec3}, we provide the variational structure of \eqref{main problem} and prove Theorem \ref{thm1.1} using a minimization argument based on the Ekeland variational principle. Finally, in Section \ref{sec4}, we establish the key compactness lemma, particularly helpful to apply the mountain pass theorem at a suitable mountain pass level and to prove Theorem \ref{thm1.2}.\\ 
\textbf{Notations.} Throughout this paper, we use the following notations:
\begin{itemize}
    \item [$\bullet$] $o_n(1)$ denotes a real sequence such that $o_n(1)\to 0$ as $n\to\infty$.
    \item [$\bullet$] $\rightharpoonup $ means weak convergence and $\rightarrow$ means strong convergence.
    \item [$\bullet$] $u^+=$ max~$\{u,0\}$ and $u^-=$ max~$\{-u,0\}$.
    \item [$\bullet$] $B_R=\{u\in \mathbf{X}:\|u\|_{\mathbf{X}}<R\}$, $\overline{B}_R=\{u\in \mathbf{X}:\|u\|_{\mathbf{X}}\leq R\}$, and $\partial B_R=\{u\in \mathbf{X}:\|u\|_{\mathbf{X}}=R\}$ for any $R>0$.
\end{itemize}

\section{Preliminaries}\label{sec2}
In this section, we first introduce some basic properties of the Heisenberg group $\mathbb{H}^N$ as well as the horizontal Sobolev spaces and some technical lemmas which will be used in the next sections. For more details on the Heisenberg group, we refer to \cite{MR2153462, MR3842328, MR4029564, MR3090536, MR4355761, MR4475683, MR4466642, MR2976058} and references found therein.

The Heisenberg group $\mathbb{H}^N$ is of topological dimension $2N+1$, i.e., the Lie group whose underlying manifold is $\mathbb{R}^{2N+1}$, endowed with the non-abelian group law
\begin{align}\label{eq2.1}
    \xi o~ \xi^\prime=\bigg(z+z^\prime,t+t^\prime+2\displaystyle\sum_{i=1}^{N}(y_ix_i^\prime-x_iy_i^\prime)\bigg),~\forall~\xi,~  \xi^\prime\in \mathbb{H}^N,
\end{align}
where $\xi=(z,t)=(x_1,\dots,x_N,y_1,\dots,y_N,t)$ and $\xi^\prime=(z^\prime,t^\prime)=(x^\prime_1,\dots,x^\prime_N,y^\prime_1,\dots,y^\prime_N,t^\prime)$. Further, we also have $\xi^{-1}=-\xi$ and there holds $(\xi o~ \xi^\prime)^{-1}=(\xi^\prime)^{-1} o~\xi^{-1}$. Here, we denote $\{X_j, Y_j\}_{j=1}^{N}$ as the standard basis of the horizontal left-invariant vector fields on $\mathbb{H}^N$ with
$$X_j=\frac{\partial}{\partial x_j}+2y_j \frac{\partial}{\partial t},~~ Y_j=\frac{\partial}{\partial y_j}-2x_j \frac{\partial}{\partial t},~T=\frac{\partial}{\partial t}~\text{for}~j=1,\dots,N. $$ This basis satisfies the Heisenberg canonical commutation relations for position and momentum with
$$[X_j,Y_k]=-4 \delta_{jk}T,~[X_j,X_k]=[X_j,T]=[Y_j,Y_k]=[Y_j,T]=0.$$
Moreover, a left-invariant vector field $X$, which is in the span of $\{X_j, Y_j\}_{j=1}^{N}$, is called horizontal. Next, for any real number $R$, we define a dilation $\delta_R:\mathbb{H}^N\to\mathbb{H}^N$ by 
\begin{align}\label{eq2.2}
 \delta_R(\xi)=(Rz,R^2t),~\forall~\xi=(z,t)\in \mathbb{H}^N.   
\end{align}
It is easy to see that the Jacobian determinant of dilation $\delta_R$ coincides with $R^{2N+2}$, where the natural number $Q=2N+2$ is the homogeneous dimension of $\mathbb{H}^N$. Also, the Kor\'anyi norm in $\mathbb{H}^N$ is given by
$$r(\xi)=r(z,t)=(|z|^4+t^2)^{\frac{1}{4}},~\forall~\xi=(z,t)\in \mathbb{H}^N.$$
The corresponding distance is called Kor\'anyi distance and is defined by 
$$ d_K(\xi,\xi^\prime)=r(\xi^{-1}o ~\xi^\prime),~\forall~(\xi,\xi^\prime)\in \mathbb{H}^N\times \mathbb{H}^N. $$
This distance acts like the Euclidean distance in horizontal directions and behaves like the square root of the Euclidean distance in the missing direction. One can easily observe that the  Kor\'anyi norm is homogeneous of degree 1 with respect to the dilation. Indeed, for any $R>0$, we have
$$ r(\delta_R(\xi))=r(Rz,R^2t)=(|Rz|^4+R^4t^2)^{\frac{1}{4}}=R~ r(\xi),~\forall~\xi=(z,t)\in \mathbb{H}^N.$$
 The Kor\'anyi open ball in $\mathbb{H}^N$  of radius $R$ and centered at $\xi_0$ is defined as follows:
 $$B_R(\xi_0)=\{\xi\in \mathbb{H}^N:~d_K(\xi,\xi_0)<R \}.$$ For simplicity, we denote $B_R$ as the open ball with centered at $O$ and radius $R$, where $O=(0,0)$ is the natural origin of $\mathbb{H}^N$.

 Under the left translations of the Heisenberg group, the Lebesgue measure on $\mathbb{R}^{2N+1}$ remains invariant. Therefore, $\mathrm{d}\xi$ is the unique Haar measure on $\mathbb{H}^N$ that corresponds with the $(2N+1)$-Lebesgue measure and $|U|$ is the unique $(2N+1)$-dimensional Lebesgue measure of every measurable set $U\subset \mathbb{H}^N$ since the Haar measures on Lie groups are unique up to constant multipliers. Moreover, the Haar measure on $\mathbb{H}^N$ is $Q$-homogenous with respect to dilation $\delta_R$. Consequently, we have
$$|\delta_R(U)| = R^Q|U| ~\text{and} ~\mathrm{d}(\delta_R \xi) = R^Q\mathrm{d}\xi .$$
In particular, we obtain $|B_R|=R^Q|B_1|$. Now, let $u\in\mathcal{C}^1(\mathbb{H}^N,\mathbb{R})$ be fixed, then the horizontal gradient (or intrinsic gradient ) of $u$ is defined as follows:
$$
 D_H u=\sum_{j=1}^{N} [(X_ju)X_j+ (Y_ju)Y_j].   
$$
Observe that $D_H u$ is an element of the span of $\{X_j, Y_j\}_{j=1}^{N}$, denoted by span $\{X_j, Y_j\}_{j=1}^{N}$. Consequently, if $\tilde{g}\in \mathcal{C}^1(\mathbb{R)}$, then $$D_H \tilde{g}(u) =\tilde{g}^\prime(u) D_H u. $$In span$\{X_j, Y_j\}_{j=1}^{N}\cong \mathbb{R}^{2N}$, we
consider the natural inner product given by
$$
  ( X , Y ) _H = \sum_{j=1}^{N} ( x _j y _j + \widetilde{x} _j \widetilde{y}_ j )  
$$
for $X=\{x_jX_j +\widetilde{x}_jY_j\}_{j=1}^N$ and $Y =\{y_jX_j+\widetilde{y}_jY_j\}_{j=1}^N$.The inner product $(\cdot,\cdot)_H$ produces the Hilbertian norm $$|X|_H=\sqrt{(X,X)_H} $$
for the horizontal vector field X. Further, the Cauchy–Schwarz inequality
$$|( X , Y ) _H|\leq |X|_H|Y|_H $$
also valid for any horizontal vector fields $X$ and $Y$. Similarly, for any horizontal vector field function $X=X(\xi)$, $X=\{x_jX_j +\widetilde{x}_jY_j\}_{j=1}^N$, of class $\mathcal{C}^1(\mathbb{H}^N,\mathbb{R}^{2N})$, we define the
horizontal divergence of $X$ by
$$ \text{div}_H X= \sum_{j=1}^{N}[X_j(x_j)+Y_j(\widetilde{x}_j)].$$
For any $u\in \mathcal{C}^1(\mathbb{H}^N)$, then the Leibnitz formula is also valid, that is, 
$$ \text{div}_H (uX)=u~ \text{div}_H X+(D_H u,X)_H.$$
Similarly, if $u\in \mathcal{C}^2(\mathbb{H}^N)$, then the Kohn–Spencer Laplace ( or the horizontal Laplace, or the
sub-Laplace) operator, in $\mathbb{H}^N$, of u is defined by
$$\Delta_H u=\displaystyle\sum_{j=1}^{N}(X_j^2+Y_j^2)u=\displaystyle\sum_{j=1}^{N}\bigg(\frac{\partial^2}{\partial x_j^2}+\frac{\partial^2}{\partial y_j^2}+4y_j \frac{\partial^2}{\partial x_j\partial t}-4x_j\frac{\partial^2}{\partial y_j\partial t}\bigg)u+4|z|^2\frac{\partial^2 u}{\partial t^2}. $$
Due to Hörmander's famous theorem [\cite{MR0222474}, Theorem 1.1], the operator $\Delta_H$ is hypoelliptic. For each $u\in \mathcal{C}^2(\mathbb{H}^N)$, we have $\Delta_H u=\text{div}_H (D_H u)$. A common generalisation of the Kohn-Spencer Laplace operator is the horizontal $p$-Laplace operator for any $p\in(1,\infty)$ on the Heisenberg group, which is  defined by
$$\Delta_{H,p} \varphi=\text{div}_H(|D_H \varphi|_H^{p-2}D_H \varphi),~\forall~\varphi\in \mathcal{C}^\infty_c(\mathbb{H}^N). $$
Let $1\leq\wp<\infty$, then the Lebesgue space $L^\wp(\mathbb{H}^N)$ is defined by
$$ L^\wp(\mathbb{H}^N)=\bigg\{u:\mathbb{H}^N\to \mathbb{R}~\text{is measurable} :~\int_{\mathbb{H}^N}|u|^\wp~\mathrm{d}\xi<\infty \bigg\}, $$
endowed with the norm 
$$\|u\|_\wp=\bigg(\int_{\mathbb{H}^N}|u|^\wp~\mathrm{d}\xi\bigg)^{\frac{1}{\wp}}. $$
Further, if $\wp=\infty$, then the norm for Lebesgue space $L^\infty(\mathbb{H}^N)$ is given by
$$\|u\|_\infty=\inf\big\{M : |u(\xi)|\leq M~\text{for a.e.} ~\xi\in\mathbb{H}^N\big\}.$$
It is well-known that $(L^\wp(\mathbb{H}^N),\|\cdot\|_\wp)$ is a reflexive and separable Banach space for $1<\wp<\infty$. Moreover, $\mathcal{C}^\infty_c(\mathbb{H}^N)$ is a dense subset of $L^\wp(\mathbb{H}^N)$ for any $1\leq \wp<\infty$ (see \cite{MR4029564} ). Also, for any $\Omega\subset \mathbb{H}^N$, we denote the norm of $L^\wp(\Omega)$ by $\|\cdot\|_{\wp,\Omega}$.

Denote
\begin{equation}\label{eq2.999}
  HW^{1,\wp}(\mathbb{H}^N)=\big\{ u\in L^\wp(\mathbb{H}^N):|D_H u|_H\in L^\wp(\mathbb{H}^N)\big\},   
\end{equation}
endowed with the norm
$$ \|u\|_{HW^{1,\wp}}=(\|u\|^\wp_\wp+\|D_H u\|^\wp_\wp)^{\frac{1}{\wp}},~\forall~\wp\in[1,\infty).$$
Notice that the space $(HW^{1,\wp}(\mathbb{H}^N),\|\cdot\|_{HW^{1,\wp}})$ is a reflexive and separable Banach space. Consequently, $\mathcal{C}^\infty_c(\mathbb{H}^N)$ is a dense subset of $HW^{1,\wp}(\mathbb{H}^N)$ (see \cite{MR4029564} ).

Moreover, we define the convolution in the Heisenberg group $\mathbb{H}^N$ (see \cite{MR0657581,MR2458901}). For this, if $u\in L^1(\mathbb{H}^N)$ and $v\in L^\wp(\mathbb{H}^N)$, where $\wp\in[1,\infty)$, then, for a.e. $\xi\in \mathbb{H}^N$, the function $\eta\mapsto u(\xi~o~\eta^{-1})v(\eta)$ is in $L^1(\mathbb{H}^N)$. Moreover, the convolution of $u$ and $v$ is defined as follows:
$$ (u * v)(\xi) = \int_{\mathbb{H}^N} u(\xi~o~\eta^{-1})v(\eta)~\mathrm{d}\eta~~\text{for a.e.} ~\xi\in \mathbb{H}^N .$$
By the analogue of the Young theorem, we obtain
$$ u * v \in L^\wp(\mathbb{H}^N)~~\text{and}~~\|u * v \|_\wp\leq \|u\|_1\|v\|_\wp.$$
Just like in the Euclidean context, the technique of regularization using the convolution can be extended to the Heisenberg group $\mathbb{H}^N$. In particular, it is possible to generate a sequence of mollifiers $(\rho_n)_n $ on $\mathbb{H}^N$ with the properties that $\rho_n \in \mathcal{C}^\infty_c(\mathbb{H}^N), ~\rho_n\geq 0$ in $\mathbb{H}^N$ and $\displaystyle\int_{\mathbb{H}^N} \rho_n(\xi)~\mathrm{d}\xi=1$ for any $n\in\mathbb{N}$, for more details, see \cite{MR4029564}. Consequently, if $\wp\in[1,\infty)$, then for all $u\in HW^{1,\wp}(\mathbb{H}^N)$, we have
$$\rho_n*u\to u~~\text{in}~~L^\wp(\mathbb{H}^N)~~\text{and}~D_H(\rho_n*u)\to D_H u ~~\text{in}~~L^\wp(\mathbb{H}^N,\mathbb{R}^{2N})~\text{as}~n\to\infty.$$

To study \eqref{main problem}, we consider the following function space as our solution space: 
\begin{equation}\label{eq2.4}
 \mathbf{X}=HW^{1,p}(\mathbb{H}^N)\cap HW^{1,Q} (\mathbb{H}^N) ~\text{for} ~1<p<Q,  
\end{equation}
endowed with the norm
$$ \|u\|_{\mathbf{X}}=\|u\|_{HW^{1,p}}+\|u\|_{HW^{1,Q}}. $$
It is easy to see that the function space $(\mathbf{X},\|\cdot\|_{\mathbf{X}})$ is a reflexive and separable Banach space. Observe that the embedding 
     $\mathbf{X}\hookrightarrow HW^{1,r}(\mathbb{H}^N)$ is continuous, where $r\in\{p,Q\}$.\\
Now we list some classical embedding results for horizontal Sobolev space in the Heisenberg group.     
\begin{lemma}[cf. \cite{MR4029564}]\label{lem2.1}
If $q\in[p,p^\ast]$ with $p^\ast=\frac{Qp}{Q-p}$, then the  embedding $HW^{1,p}(\mathbb{H}^N) \hookrightarrow L^q (\mathbb{H}^N)$ is continuous. In addition, the embedding $HW^{1,p}(\mathbb{H}^N)\hookrightarrow L^q (B_R)$ is compact for any $R>0$ and $q\in[1,p^\ast)$.
\end{lemma}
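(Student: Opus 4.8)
The plan is to deduce both parts from the Folland--Stein Sobolev inequality on the stratified group $\mathbb{H}^N$: there is a constant $C=C(N,p)>0$ such that
\[\|u\|_{p^\ast}\leq C\,\|D_H u\|_p\qquad\text{for every }u\in\mathcal{C}^\infty_c(\mathbb{H}^N),\]
which is the exact analogue of the Gagliardo--Nirenberg--Sobolev inequality with the topological dimension replaced by the homogeneous dimension $Q=2N+2$. This inequality is classical: one proof represents $u$ via the fundamental solution of the sub-Laplacian $\Delta_H$ and then applies the Hardy--Littlewood--Sobolev inequality on $\mathbb{H}^N$, while an alternative route combines the sub-Riemannian isoperimetric inequality with the coarea formula. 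Granting it, fix $q\in[p,p^\ast]$ and write $\tfrac1q=\tfrac{1-\theta}{p}+\tfrac{\theta}{p^\ast}$ with $\theta\in[0,1]$; then for $u\in\mathcal{C}^\infty_c(\mathbb{H}^N)$,
\[\|u\|_q\leq\|u\|_p^{1-\theta}\,\|u\|_{p^\ast}^{\theta}\leq\|u\|_p^{1-\theta}\bigl(C\,\|D_H u\|_p\bigr)^{\theta}\leq C'\,\|u\|_{HW^{1,p}},\]
and the density of $\mathcal{C}^\infty_c(\mathbb{H}^N)$ in $HW^{1,p}(\mathbb{H}^N)$ upgrades this to the continuous embedding $HW^{1,p}(\mathbb{H}^N)\hookrightarrow L^q(\mathbb{H}^N)$.

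For the compact embedding the crucial case is $q=p$, that is, the Rellich--Kondrachov statement that $HW^{1,p}(\mathbb{H}^N)\hookrightarrow L^p(B_R)$ is compact. I would prove this using the mollifiers $(\rho_n)_n$ recalled above. Given a bounded sequence $(u_k)\subset HW^{1,p}(\mathbb{H}^N)$, one shows on the one hand that $\sup_k\|\rho_n*u_k-u_k\|_{p,B_R}\to 0$ as $n\to\infty$: indeed $\rho_n*u_k-u_k$ is a superposition of differences of left-translates of $u_k$, each such difference being controlled by $\|D_H u_k\|_p$ times a modulus depending on the size of the translation (with the characteristic square-root behaviour of the Kor\'anyi metric in the missing direction), which tends to $0$ with $n$ uniformly in $k$. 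On the other hand, for each fixed $n$ the functions $\rho_n*u_k$ are uniformly bounded in $\mathcal{C}^1(\overline{B_R})$ (their horizontal derivatives are bounded by a constant, depending only on $n$, times $\|u_k\|_p$), hence relatively compact in $\mathcal{C}(\overline{B_R})$ by Arzel\`a--Ascoli; a diagonal argument then extracts a subsequence converging in $L^p(B_R)$. Equivalently, one may invoke the Fr\'echet--Kolmogorov compactness criterion directly.

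Finally, passing from $q=p$ to a general $q\in[1,p^\ast)$ on the bounded set $B_R$ is soft. Take a bounded sequence $(u_k)\subset HW^{1,p}(\mathbb{H}^N)$ and a subsequence (not relabelled) converging in $L^p(B_R)$. If $q\in[1,p]$, H\"older's inequality on $B_R$ gives $\|u_k-u_j\|_{q,B_R}\leq|B_R|^{\frac1q-\frac1p}\|u_k-u_j\|_{p,B_R}\to 0$. If $q\in(p,p^\ast)$, write $\tfrac1q=\tfrac{1-\theta}{p}+\tfrac{\theta}{p^\ast}$ with $\theta\in(0,1)$ and interpolate, using that $(u_k)$ is bounded in $L^{p^\ast}(\mathbb{H}^N)$ by the first part:
\[\|u_k-u_j\|_{q,B_R}\leq\|u_k-u_j\|_{p,B_R}^{1-\theta}\,\|u_k-u_j\|_{p^\ast,B_R}^{\theta}\leq C\,\|u_k-u_j\|_{p,B_R}^{1-\theta}\to 0.\]
Hence the subsequence is Cauchy, and so convergent, in $L^q(B_R)$, which is the asserted compactness. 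The only genuinely non-Euclidean ingredients are the Folland--Stein inequality and the uniform equicontinuity of left-translates on $\mathbb{H}^N$, both standard facts about stratified groups; since the statement here is exactly the embedding lemma in \cite{MR4029564}, in the paper it suffices to cite that reference.
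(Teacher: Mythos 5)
Your argument is correct, and it is worth noting that the paper itself offers no proof of this lemma: it is stated with a bare citation to \cite{MR4029564}, so there is no in-paper argument to compare against. What you have written is precisely the standard proof of the cited result --- the Folland--Stein inequality $\|u\|_{p^\ast}\leq C\|D_H u\|_p$ on the stratified group, interpolation between $L^p$ and $L^{p^\ast}$ plus density of $\mathcal{C}^\infty_c(\mathbb{H}^N)$ for the continuous embedding, and a mollifier-based Rellich--Kondrachov argument (translation estimates controlled by $\|D_H u_k\|_p$ with the square-root behaviour in the vertical direction, Arzel\`a--Ascoli or Fr\'echet--Kolmogorov, then H\"older/interpolation on $B_R$ to cover all $q\in[1,p^\ast)$) for the compactness. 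The only step you compress is the uniform-in-$k$ smallness of $\|\rho_n\ast u_k-u_k\|_{p,B_R}$, which rests on the left-translation estimate in the Carnot--Carath\'eodory (equivalently Kor\'anyi) metric; you identify this correctly as the one genuinely non-Euclidean ingredient, so for the purposes of this paper citing \cite{MR4029564} as you do at the end is exactly what is done.
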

\begin{lemma}[cf.\cite{MR3219231}]\label{lem2.2}
If $s\in[Q,\infty)$, then the  embedding $HW^{1,Q}(\mathbb{H}^N) \hookrightarrow L^s (\mathbb{H}^N)$ is continuous. Furthermore, the embedding $HW^{1,Q}(\mathbb{H}^N)\hookrightarrow L^s (B_R)$ is compact for any $R>0$ and $s\in[1,\infty)$.
\end{lemma}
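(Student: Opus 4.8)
The plan is to establish the two assertions in order: the continuous embedding $HW^{1,Q}(\mathbb{H}^N)\hookrightarrow L^s(\mathbb{H}^N)$ for $s\in[Q,\infty)$ by applying the subcritical Sobolev embedding of Lemma \ref{lem2.1} to suitable powers of $u$, and the local compactness by first restricting to the Korányi ball $B_R$ and then reducing matters to the subcritical Rellich--Kondrachov theorem. Although the statement is classical (and can simply be quoted from \cite{MR3219231}), I would argue as follows.

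\textbf{Continuous embedding.} The case $s=Q$ is immediate from $\|u\|_Q\le\|u\|_{HW^{1,Q}}$, so fix $s\in(Q,\infty)$. Given $u\in HW^{1,Q}(\mathbb{H}^N)$ and a parameter $m\in(1,Q)$, I would consider $w:=|u|^{Q/m}$. Since $Q/m>1$, a standard approximation/chain-rule argument yields $w\in HW^{1,m}(\mathbb{H}^N)$ with $|D_H w|_H=\tfrac{Q}{m}\,|u|^{(Q-m)/m}|D_H u|_H$; Hölder's inequality with conjugate exponents $\tfrac{Q}{Q-m}$ and $\tfrac{Q}{m}$ then gives $\|D_H w\|_m\le\tfrac{Q}{m}\|u\|_Q^{(Q-m)/m}\|D_H u\|_Q$, while $\|w\|_m^m=\|u\|_Q^Q$. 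Applying the continuous embedding of Lemma \ref{lem2.1} to $w$ shows $w\in L^{m^\ast}(\mathbb{H}^N)$, i.e.\ $u\in L^{\kappa(m)}(\mathbb{H}^N)$ with $\kappa(m):=\tfrac{Q}{m}\,m^\ast=\tfrac{Q^2}{Q-m}$, and unwinding the norms (using Young's inequality on the mixed term) gives a bound $\|u\|_{\kappa(m)}\le C(m)\,\|u\|_{HW^{1,Q}}$. Because $\kappa(m)\to\infty$ as $m\uparrow Q$, I can choose $m=m(s)\in(1,Q)$ with $\kappa(m)>s$; interpolating between $L^Q$ and $L^{\kappa(m)}$ (Hölder) then yields $u\in L^s(\mathbb{H}^N)$ with $\|u\|_s\le\|u\|_Q^{1-\theta}\|u\|_{\kappa(m)}^{\theta}\le C\,\|u\|_{HW^{1,Q}}$, which is the claimed continuous embedding.

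\textbf{Compact embedding.} Fix $R>0$ and $s\in[1,\infty)$, and let $(u_n)$ be bounded in $HW^{1,Q}(\mathbb{H}^N)$. By the continuous embedding just proved, $(u_n)$ is bounded in $L^q(\mathbb{H}^N)$ for every $q\in[Q,\infty)$. Choose $p\in(1,Q)$ with $p^\ast>s$ (possible since $p^\ast\to\infty$ as $p\uparrow Q$). Since $|B_R|<\infty$, Hölder's inequality shows that the restrictions $u_n|_{B_R}$ are bounded in $HW^{1,p}(B_R)$. The Rellich--Kondrachov theorem on $B_R$---which one obtains from the compact embedding in Lemma \ref{lem2.1} combined with the bounded Sobolev extension operator for the (John-type) domain $B_R$---then furnishes a subsequence converging in $L^q(B_R)$ for every $q\in[1,p^\ast)$, in particular in $L^s(B_R)$ since $s<p^\ast$. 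This gives the asserted compactness.

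The genuinely delicate point, I expect, is neither the Hölder bookkeeping nor the (classical) chain rule for $w=|u|^{Q/m}$, but the observation that on the unbounded group $HW^{1,Q}(\mathbb{H}^N)$ does \emph{not} embed into $HW^{1,p}(\mathbb{H}^N)$ for $p<Q$, so one cannot directly invoke a subcritical compact embedding; localizing to $B_R$ first---where finite measure restores the lower-exponent inclusion---is the step that makes the compactness argument legitimate.
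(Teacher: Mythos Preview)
Your argument is correct. The paper itself does not give a proof of this lemma at all---it is stated as a citation \texttt{cf.~\cite{MR3219231}} and used as a black box---so there is no ``paper's approach'' to compare with. What you have supplied is a standard and valid self-contained derivation of the borderline embedding from the subcritical one (Lemma~\ref{lem2.1}): the bootstrap via $w=|u|^{Q/m}$ and interpolation is exactly the classical route, and your observation that localization to $B_R$ is what restores the lower-exponent Sobolev control (since $HW^{1,Q}(\mathbb{H}^N)\not\hookrightarrow HW^{1,p}(\mathbb{H}^N)$ globally) is precisely the point.

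One minor simplification you may prefer over invoking an extension operator for the Kor\'anyi ball: take a cutoff $\psi\in\mathcal{C}^\infty_c(\mathbb{H}^N)$ with $\psi\equiv 1$ on $B_R$ and $\operatorname{supp}\psi\subset B_{2R}$; then $\psi u_n$ is bounded directly in $HW^{1,p}(\mathbb{H}^N)$ (by H\"older on $B_{2R}$), so Lemma~\ref{lem2.1} applies verbatim without any extension theorem, and $\psi u_n=u_n$ on $B_R$ gives the conclusion. This avoids appealing to the (true but nontrivial) fact that Kor\'anyi balls are Sobolev extension domains.
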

\begin{corollary} \label{cor2.3}
In light of Lemma \ref{lem2.1} and Lemma \ref{lem2.2}, the  embedding
 $\mathbf{X}\hookrightarrow L^\vartheta (\mathbb{H}^N)$ is continuous for any $\vartheta \in [p,p^\ast]\cup [Q,\infty)$. Moreover, the embedding $\mathbf{X}\hookrightarrow L^\vartheta(B_R)$ is compact for any $R>0$ and $\vartheta\in[1,\infty)$.
\end{corollary}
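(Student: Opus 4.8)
The plan is to derive Corollary \ref{cor2.3} directly from Lemma \ref{lem2.1}, Lemma \ref{lem2.2}, and the definition of $\mathbf{X}$ as a normed intersection. First I would establish the continuous embedding into $L^\vartheta(\mathbb{H}^N)$ for $\vartheta\in[p,p^\ast]\cup[Q,\infty)$. For $\vartheta\in[p,p^\ast]$, note that $u\in\mathbf{X}$ implies $u\in HW^{1,p}(\mathbb{H}^N)$, so Lemma \ref{lem2.1} gives $\|u\|_\vartheta\leq C\|u\|_{HW^{1,p}}\leq C\|u\|_{\mathbf{X}}$, where the last inequality uses that $\|u\|_{HW^{1,p}}\leq\|u\|_{\mathbf{X}}$ by definition of the $\mathbf{X}$-norm. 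Symmetrically, for $\vartheta\in[Q,\infty)$, $u\in HW^{1,Q}(\mathbb{H}^N)$ and Lemma \ref{lem2.2} gives $\|u\|_\vartheta\leq C\|u\|_{HW^{1,Q}}\leq C\|u\|_{\mathbf{X}}$. This yields the continuous embedding on the union of the two exponent ranges.

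Next I would address the compact embedding $\mathbf{X}\hookrightarrow L^\vartheta(B_R)$ for all $\vartheta\in[1,\infty)$ and fixed $R>0$. The idea is to split $[1,\infty)$ according to which of the two parent spaces supplies compactness: for $\vartheta\in[1,p^\ast)$ we use the compact embedding $HW^{1,p}(\mathbb{H}^N)\hookrightarrow L^\vartheta(B_R)$ from Lemma \ref{lem2.1}, and for $\vartheta\in[1,\infty)$ (in particular for $\vartheta\geq p^\ast$) we use the compact embedding $HW^{1,Q}(\mathbb{H}^N)\hookrightarrow L^\vartheta(B_R)$ from Lemma \ref{lem2.2}. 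Since $\mathbf{X}$ embeds continuously into each of $HW^{1,p}(\mathbb{H}^N)$ and $HW^{1,Q}(\mathbb{H}^N)$, the composition of a continuous map with a compact map is compact; hence $\mathbf{X}\hookrightarrow L^\vartheta(B_R)$ is compact for every $\vartheta\in[1,\infty)$. Concretely, given a bounded sequence $(u_n)$ in $\mathbf{X}$, it is bounded in $HW^{1,Q}(\mathbb{H}^N)$, so by Lemma \ref{lem2.2} a subsequence converges strongly in $L^\vartheta(B_R)$; this suffices for compactness of the embedding.

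There is no serious obstacle here: the statement is an immediate bookkeeping consequence of the two cited lemmas together with the elementary norm inequalities $\|u\|_{HW^{1,p}}\leq\|u\|_{\mathbf{X}}$ and $\|u\|_{HW^{1,Q}}\leq\|u\|_{\mathbf{X}}$. The only point requiring a moment's care is the exponent ranges: one must observe that $[1,p^\ast)\cup[1,\infty)=[1,\infty)$ trivially, so the $HW^{1,Q}$-compactness alone already covers all of $[1,\infty)$ for the ball, while the $HW^{1,p}$-part is relevant only for the continuous global embedding on $[p,p^\ast]$. I would also remark that the union $[p,p^\ast]\cup[Q,\infty)$ in the continuous embedding need not be an interval, and that no interpolation between the two ranges is claimed; the proof treats the two ranges entirely separately. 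I expect the write-up to be short, essentially the two displays above plus the remark on composing continuous and compact maps.
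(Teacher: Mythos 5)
Your argument is correct and is exactly the intended one: the paper states the corollary without proof, ``in light of'' Lemma \ref{lem2.1} and Lemma \ref{lem2.2}, and the implicit reasoning is precisely your bookkeeping via the norm inequalities $\|u\|_{HW^{1,p}}\leq\|u\|_{\mathbf{X}}$, $\|u\|_{HW^{1,Q}}\leq\|u\|_{\mathbf{X}}$ together with composing the continuous embedding $\mathbf{X}\hookrightarrow HW^{1,Q}(\mathbb{H}^N)$ with the compact embedding of Lemma \ref{lem2.2}, which alone covers all of $[1,\infty)$ on $B_R$. Your remark that no interpolation is needed and that the two exponent ranges are treated separately matches the paper's statement as well.
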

\begin{lemma}\label{lem2.4}
Let $\vartheta=\frac{Q}{Q-s}$ and $s\in(1,Q)$. Then the embedding $L^Q(\mathbb{H}^N)\hookrightarrow L^s_g(\mathbb{H}^N)$ is continuous for any $s\in(1,Q)$ and there holds $$\|u\|_{s,g}\leq \|h\|^{1/s}_\vartheta \|u\|_Q,~\forall~u\in L^Q(\mathbb{H}^N)~,~\text{where}~\|u\|^s_{s,g}=\int_{\mathbb{H}^N} g(\xi)|u|^s~\mathrm{d}\xi $$
is a norm for the weighted Lebesgue space $L^s_g(\mathbb{H}^N)$ with weight function $g\in L^\vartheta(\mathbb{H}^N)$.

In addition, due to Lemma \ref{lem2.2}, the embedding $HW^{1,Q}(\mathbb{H}^N)\hookrightarrow L^s_g (\mathbb{H}^N)$ is compact. Furthermore, by using the continuous embedding $ \mathbf{X}\hookrightarrow HW^{1,Q}(\mathbb{H}^N)$ , we also conclude that the embedding $\mathbf{X}\hookrightarrow L^s_g (\mathbb{H}^N)$ is compact.
\end{lemma}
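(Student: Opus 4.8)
The plan is to establish the three claims successively, the genuine work being the compact embedding $HW^{1,Q}(\mathbb{H}^N)\hookrightarrow L^s_g(\mathbb{H}^N)$; the rest is Hölder's inequality together with an abstract composition argument.

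\emph{Step 1: continuity of $L^Q(\mathbb{H}^N)\hookrightarrow L^s_g(\mathbb{H}^N)$.} The conjugate exponent of $\vartheta=\frac{Q}{Q-s}$ is $\vartheta'=\frac{Q}{s}$, and $s<Q$ makes both exponents larger than $1$. For $u\in L^Q(\mathbb{H}^N)$ I would apply Hölder's inequality to the pair $g$ and $|u|^s$:
\[
\int_{\mathbb{H}^N}g(\xi)|u|^s\,\mathrm{d}\xi\le \Big(\int_{\mathbb{H}^N}g^\vartheta\,\mathrm{d}\xi\Big)^{1/\vartheta}\Big(\int_{\mathbb{H}^N}|u|^{s\vartheta'}\,\mathrm{d}\xi\Big)^{1/\vartheta'}=\|g\|_\vartheta\,\|u\|_Q^{s},
\]
since $s\vartheta'=Q$; taking $s$-th roots gives $\|u\|_{s,g}\le\|g\|_\vartheta^{1/s}\|u\|_Q$. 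That $\|\cdot\|_{s,g}$ is a norm on $L^s_g(\mathbb{H}^N)$ I would deduce from $g>0$ a.e.: the map $u\mapsto g^{1/s}u$ is a linear bijection of $L^s_g(\mathbb{H}^N)$ onto $L^s(\mathbb{H}^N)$ with $\|u\|_{s,g}=\|g^{1/s}u\|_s$, so the norm axioms (in particular Minkowski and positive definiteness) transfer.

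\emph{Step 2: compactness of $HW^{1,Q}(\mathbb{H}^N)\hookrightarrow L^s_g(\mathbb{H}^N)$.} Let $(u_n)$ be bounded in $HW^{1,Q}(\mathbb{H}^N)$, $\|u_n\|_{HW^{1,Q}}\le M$; by reflexivity I pass to a subsequence with $u_n\rightharpoonup u$ in $HW^{1,Q}(\mathbb{H}^N)$, hence $u_n\rightharpoonup u$ in $L^Q(\mathbb{H}^N)$ and $\|u\|_Q\le M$ by weak lower semicontinuity, so $\|u_n-u\|_Q\le 2M$ for all $n$. To prove $u_n\to u$ in $L^s_g(\mathbb{H}^N)$ I split, for $R>0$,
\[
\|u_n-u\|_{s,g}^s=\int_{B_R}g|u_n-u|^s\,\mathrm{d}\xi+\int_{\mathbb{H}^N\setminus B_R}g|u_n-u|^s\,\mathrm{d}\xi.
\]
Given $\eps>0$: since $g^\vartheta\in L^1(\mathbb{H}^N)$, absolute continuity of the integral lets me fix $R$ so large that $\big(\int_{\mathbb{H}^N\setminus B_R}g^\vartheta\big)^{1/\vartheta}(2M)^s<\eps/2$, whence the Hölder bound of Step 1 applied on $\mathbb{H}^N\setminus B_R$ makes the second term $<\eps/2$ for \emph{every} $n$. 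For this fixed $R$, Lemma \ref{lem2.2} gives the compact embedding $HW^{1,Q}(\mathbb{H}^N)\hookrightarrow L^Q(B_R)$, which sends the weakly convergent sequence $(u_n)$ to a norm-convergent one, i.e.\ $u_n\to u$ in $L^Q(B_R)$; Hölder on $B_R$ then yields $\int_{B_R}g|u_n-u|^s\le\|g\|_\vartheta\|u_n-u\|_{Q,B_R}^s\to0$. Hence $\|u_n-u\|_{s,g}<\eps^{1/s}$ for $n$ large, and since an arbitrary bounded sequence thus has a subsequence converging in $L^s_g(\mathbb{H}^N)$, the embedding is compact.

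\emph{Step 3: compactness of $\mathbf{X}\hookrightarrow L^s_g(\mathbb{H}^N)$.} The inclusion $\mathbf{X}\hookrightarrow HW^{1,Q}(\mathbb{H}^N)$ is bounded, and the composition of a bounded linear operator with the compact operator from Step 2 is compact. I expect the only subtle point to be the order of choices in Step 2: one must fix the radius $R$ from the integrability of $g^\vartheta$ and the \emph{uniform} bound $2M$ first, and only afterwards invoke the local compact embedding on the now-frozen ball $B_R$ — so no diagonal argument is required, and the remaining estimates are routine applications of Hölder's inequality with the exponent pair $\big(\tfrac{Q}{Q-s},\tfrac{Q}{s}\big)$.
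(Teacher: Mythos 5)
Your proof is correct, and it is exactly the standard argument the paper has in mind: the paper itself gives no details, merely citing [\cite{MR4258779}, Lemma 2.2], whose idea is precisely your combination of H\"older's inequality with the pair $\big(\tfrac{Q}{Q-s},\tfrac{Q}{s}\big)$, a uniform tail estimate from $g\in L^\vartheta(\mathbb{H}^N)$, and the local compact embedding of Lemma \ref{lem2.2} on a fixed ball, followed by composition with the continuous inclusion $\mathbf{X}\hookrightarrow HW^{1,Q}(\mathbb{H}^N)$. Your remark about fixing $R$ (and the uniform bound $2M$) before invoking local compactness is the right way to order the choices, so nothing further is needed.
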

\begin{proof}
    By applying the same idea as in [\cite{MR4258779}, Lemma 2.2], the lemma can be proved. 
\end{proof}
\begin{corollary}[cf. \cite{MR2980499}]\label{cor2.5}
The function 
   $\Phi(t)=\exp(t)-\displaystyle\sum_{j=0}^{Q-2}\frac{t^j}{j!}$ is increasing and convex in $[0,\infty)$. Moreover, for any $\wp\geq 1$ and $t\geq 0$ be real numbers, then 
 $$ \bigg(\exp(t)-\sum_{j=0}^{Q-2}\frac{t^j}{j!}\bigg)^\wp\leq \exp(\wp t)-\sum_{j=0}^{Q-2}\frac{(\wp t)^j}{j!}. $$
\end{corollary}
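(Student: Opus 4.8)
The plan is to handle the two assertions separately, the second by a monotonicity-in-$\wp$ argument.

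\emph{Monotonicity and convexity.} I would first rewrite $\Phi$ through its Taylor expansion $\Phi(t)=\sum_{j\ge Q-1}\frac{t^j}{j!}$, valid on all of $\mathbb{R}$, whose coefficients are nonnegative and whose lowest degree is $Q-1\ge 3$. Differentiating term by term gives $\Phi'(t)=\sum_{j\ge Q-1}\frac{t^{j-1}}{(j-1)!}$ and $\Phi''(t)=\sum_{j\ge Q-1}\frac{t^{j-2}}{(j-2)!}$; since $Q\ge 3$ every exponent appearing is $\ge 0$, so both series are nonnegative on $[0,\infty)$. Hence $\Phi$ is increasing and convex there.

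\emph{The inequality.} The case $t=0$ is immediate because $\Phi(0)=0$. Fix $t>0$; then $\Phi(t)>0$ and $\Phi(\wp t)>0$, so $\Phi(t)^\wp\le\Phi(\wp t)$ is equivalent, after taking logarithms, to $\phi(\wp):=\ln\Phi(\wp t)-\wp\ln\Phi(t)\ge 0$. Observe that $\phi(1)=0$, and $\phi$ is differentiable in $\wp$ with $\phi'(\wp)=t\,\frac{\Phi'(\wp t)}{\Phi(\wp t)}-\ln\Phi(t)$. I would prove $\phi'(\wp)>0$ for every $\wp\ge 1$, which forces $\phi(\wp)\ge\phi(1)=0$ on $[1,\infty)$ and completes the argument. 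This rests on two elementary facts: (i) for every $s>0$ one has $\Phi'(s)>\Phi(s)>0$, indeed $\Phi'(s)-\Phi(s)=\frac{s^{Q-2}}{(Q-2)!}>0$ (equivalently, shift the summation index in the series for $\Phi'$), so $\Phi'(s)/\Phi(s)>1$; and (ii) $\ln\Phi(t)<t$, since $\Phi(t)=\exp(t)-\sum_{j=0}^{Q-2}\frac{t^j}{j!}<\exp(t)$. Applying (i) with $s=\wp t>0$ yields $t\,\Phi'(\wp t)/\Phi(\wp t)>t$, and combining with (ii), $\phi'(\wp)>t-\ln\Phi(t)>0$.

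\emph{Where the difficulty sits.} The only subtle point is covering all real $\wp\ge 1$ uniformly. One is tempted either to first settle integer $\wp$ by a combinatorial comparison of Taylor coefficients (the number of ways of writing $n$ as an ordered sum of $k$ integers each $\ge Q-1$ is at most $k^n$, so $\Phi(t)^k\le\Phi(kt)$ coefficientwise) and then interpolate, or to peel off $(a-b)^\wp\le a^\wp-b^\wp$ with $a=\exp(t)$ and $b=\sum_{j=0}^{Q-2}\frac{t^j}{j!}$. Either route reduces the claim to $\big(\sum_{j=0}^{Q-2}\frac{t^j}{j!}\big)^\wp\ge\sum_{j=0}^{Q-2}\frac{(\wp t)^j}{j!}$, which is less transparent to establish directly. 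The advantage of the monotonicity-in-$\wp$ scheme above is precisely that the estimate $\Phi'/\Phi>1$ — which holds because $\Phi$ carries the whole tail of the exponential series, unlike a truncated polynomial $P$, for which $P'/P<1$ — sidesteps this issue entirely.
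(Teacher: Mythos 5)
Your proof is correct. Note, however, that the paper does not actually prove Corollary \ref{cor2.5}: it is stated with a citation (cf.\ \cite{MR2980499}), so there is no in-paper argument to compare against, and your write-up supplies a complete self-contained derivation. Your route is the log-derivative one: the identity $\Phi'(s)-\Phi(s)=\frac{s^{Q-2}}{(Q-2)!}$ (valid precisely because $\Phi$ carries the whole tail of the exponential series) gives $\Phi'(s)/\Phi(s)>1$ for $s>0$, which together with $\ln\Phi(t)<t$ makes $\wp\mapsto\ln\Phi(\wp t)-\wp\ln\Phi(t)$ increasing on $[1,\infty)$ and hence nonnegative there, proving the inequality for all real $\wp\geq 1$ in one stroke; monotonicity and convexity follow from the nonnegative Taylor coefficients, using $Q=2N+2\geq 4$ so that all exponents in $\Phi''$ are nonnegative. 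This genuinely differs from the standard argument in the cited literature, which either compares Taylor coefficients of $\Phi(t)^{\wp}$ and $\Phi(\wp t)$ (transparent only for integer $\wp$) or peels off the truncated polynomial via $(a-b)^{\wp}\leq a^{\wp}-b^{\wp}$ and is then left with the less obvious bound $\bigl(\sum_{j=0}^{Q-2}t^{j}/j!\bigr)^{\wp}\geq\sum_{j=0}^{Q-2}(\wp t)^{j}/j!$; your scheme bypasses exactly that step, at the modest cost of the separate (and correctly handled) case $t=0$ needed so that $\ln\Phi(t)$ is defined.
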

The following theorem was proved by W. S. Cohn et al. in \cite{MR2927116}, called Trudinger–Moser inequality in the Heisenberg group $\mathbb{H}^N$.
\begin{theorem}(\textbf{Trudinger–Moser inequality in $\mathbb{H}^N$-I })\label{thm2.6}
 For any fixed $\alpha>0$, $0\leq \beta<Q$, and $u\in HW^{1,Q}(\mathbb{H}^N)$, then there holds $$\displaystyle\frac{\Phi(\alpha |u|^{Q^\prime})}{r(\xi)^\beta}\in L^1(\mathbb{H}^N),$$ where $\Phi$ is defined as in $(f1)$. Further, if $\alpha_Q=Q\sigma_Q^{\frac{1}{Q-1}}$, where $\sigma_Q=\displaystyle\int_{r(\xi)=1}|z|^Q~\mathrm{d}\mu$ with $\xi=(z,t)\in \mathbb{H}^N$ and let $\alpha^\ast$ be such that $\alpha^{\ast}=\frac{\alpha_Q}{c^\ast}$, where $c^\ast$ is characterized as follows: $$c^\ast=\inf\bigg\{ c^{\frac{1}{Q-1}}:~\displaystyle\int_{\mathbb{H}^N} |D_H u^\ast|^Q~\mathrm{d}\xi\leq c \displaystyle\int_{\mathbb{H}^N} |D_H u|^Q~\mathrm{d}\xi,~u\in HW^{1,Q}(\mathbb{H}^N)\bigg\},$$
 with $u^\ast$ is regarded as the decreasing rearrangement of $u$, then for $0<\alpha\leq \alpha^\ast$ and $\frac{\alpha}{\alpha^\ast}+\frac{\beta}{Q}\leq 1$, there holds
 \begin{align}\label{eq2.5}
 \sup_{\|u\|_{HW^{1,Q}}\leq 1} ~\int_{\mathbb{H}^N}\cfrac{\Phi\big(\alpha|u|^{Q^\prime}\big)}{r(\xi)^\beta}~\mathrm{d}\xi<+\infty.    
 \end{align}
  Consequently, if $\frac{\alpha}{\alpha^\ast}+\frac{\beta}{Q}> 1$, then the integral in \eqref{eq2.5} is still finite for any $u\in HW^{1,Q}(\mathbb{H}^N)$, but the supremum is infinite if further $\frac{\alpha}{\alpha_Q}+\frac{\beta}{Q}> 1$.
\end{theorem}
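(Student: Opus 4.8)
The plan is to follow the classical Moser scheme adapted to the sub-Riemannian setting, exploiting Kor\'anyi-symmetric rearrangement together with the bounded-domain Trudinger--Moser inequality of Cohn--Lu, and to treat the three assertions separately.

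For the integrability claim (fixed $u\in HW^{1,Q}(\mathbb{H}^N)$, arbitrary $\alpha>0$, $0\le\beta<Q$) I would split $\mathbb{H}^N=B_1\cup(\mathbb{H}^N\setminus B_1)$. On the exterior region the weight $r(\xi)^{-\beta}$ is bounded, so it suffices to show $\Phi(\alpha|u|^{Q^\prime})\in L^1(\mathbb{H}^N)$; expanding $\Phi(t)=\sum_{j\ge Q-1}t^j/j!$ and using the continuous embeddings $HW^{1,Q}(\mathbb{H}^N)\hookrightarrow L^{jQ^\prime}(\mathbb{H}^N)$ from Lemma \ref{lem2.2} (valid for every $jQ^\prime\ge Q$), together with the standard Gagliardo--Nirenberg-type growth bound on the embedding constants, one checks that the resulting series converges. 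On $B_1$ I would use H\"older's inequality with exponents $q,q^\prime$ chosen so that $\beta q^\prime<Q$ (possible since $\beta<Q$), which reduces matters to $\Phi(\alpha|u|^{Q^\prime})\in L^q(B_1)$; by Corollary \ref{cor2.5} this follows from $\int_{B_1}\Phi(q\alpha|u|^{Q^\prime})<\infty$, and the latter is obtained by splitting $u=u_1+u_2$ with $u_1\in C_c^\infty$ and $\|D_H u_2\|_Q$ small, estimating the $u_1$-factor trivially and applying the critical bounded-domain inequality to the $u_2$-factor after absorbing the fixed constant $q\alpha\,2^{Q^\prime-1}$ into the smallness of $\|D_H u_2\|_Q^{Q^\prime}$.

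For the supremum bound I would symmetrize. Since $\Phi$ is increasing and $r(\xi)^{-\beta}$ is already a radially decreasing weight, a Hardy--Littlewood rearrangement inequality gives $\int\Phi(\alpha|u|^{Q^\prime})r^{-\beta}\le\int\Phi(\alpha|u^\ast|^{Q^\prime})r^{-\beta}$, where $u^\ast$ is the Kor\'anyi-decreasing rearrangement; by the very definition of $c^\ast$ in the statement, $\|D_H u^\ast\|_Q^Q\le c^\ast\|D_H u\|_Q^Q$, so the constraint $\|u\|_{HW^{1,Q}}\le 1$ passes (up to the factor $c^\ast$) to $u^\ast$. This reduces everything to a one-dimensional problem: writing $u^\ast$ as a function of $\rho=r(\xi)$ and integrating out the Kor\'anyi sphere (where $\sigma_Q=\int_{r=1}|z|^Q\,\mathrm{d}\mu$, hence $\alpha_Q=Q\sigma_Q^{1/(Q-1)}$, enters), the change of variables $t$ proportional to $\log(1/\rho)$ turns the gradient integral into $\int_0^\infty|\phi^\prime|^Q\le 1$ and the target integral into a Moser-type functional $\int_0^\infty\exp\!\big(\tfrac{\alpha}{\alpha^\ast}\phi(t)^{Q^\prime}-(1-\tfrac{\beta}{Q})t\big)\,\mathrm{d}t$, the extra linear term $-(\beta/Q)t$ coming precisely from the weight. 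The one-dimensional Moser lemma then yields finiteness exactly when $\tfrac{\alpha}{\alpha^\ast}\le 1-\tfrac{\beta}{Q}$, i.e.\ $\tfrac{\alpha}{\alpha^\ast}+\tfrac{\beta}{Q}\le 1$; the ``borderline-finite-but-not-uniform'' statement for $\tfrac{\alpha}{\alpha^\ast}+\tfrac{\beta}{Q}>1$ follows from the same analysis applied to a single fixed profile.

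Finally, the sharpness assertion ($\sup=+\infty$ once $\tfrac{\alpha}{\alpha_Q}+\tfrac{\beta}{Q}>1$) is proved by exhibiting the Heisenberg analogue of the Moser concentrating family: functions equal to a constant multiple of $(\log n)^{(Q-1)/Q}$ on $B_{1/n}$, interpolated log-linearly in $\rho$ on $B_1\setminus B_{1/n}$, and zero outside; a direct computation shows $\|D_H m_n\|_Q$ converges to a normalizing constant while $\int\Phi(\alpha|m_n|^{Q^\prime})r^{-\beta}\,\mathrm{d}\xi\to\infty$ under the stated condition. I expect the rearrangement step to be the main obstacle: unlike in Euclidean space, the P\'olya--Szeg\H{o} inequality need not hold on $\mathbb{H}^N$, which is exactly why the best provable constant is $\alpha^\ast=\alpha_Q/c^\ast$ rather than $\alpha_Q$; making the symmetrization rigorous (choosing the correct notion of Kor\'anyi rearrangement, controlling the horizontal gradient under it, and tracking the constant $c^\ast$) is the delicate part, after which the reduction to Moser's one-dimensional lemma is routine.
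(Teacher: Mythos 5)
The paper itself gives no proof of Theorem \ref{thm2.6}: it is imported verbatim from Cohn et al.\ \cite{MR2927116}, so the only comparison available is with the cited literature. Your overall scheme (a decomposition argument for the fixed-$u$ integrability claim, Kor\'anyi symmetrization controlled through the constant $c^\ast$ followed by reduction to a one-dimensional Moser functional for the uniform bound, and a truncated-logarithm concentrating family normalized via $\sigma_Q$ for the sharpness assertion) is the natural route suggested by the way the statement is phrased, and the sharpness computation is fine in outline. However, two steps would fail as written.

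First, the exterior-region argument for the integrability claim does not work: expanding $\Phi(\alpha|u|^{Q^\prime})=\sum_{j\ge Q-1}\alpha^j|u|^{jQ^\prime}/j!$ and invoking the embeddings $HW^{1,Q}(\mathbb{H}^N)\hookrightarrow L^{jQ^\prime}(\mathbb{H}^N)$ cannot give convergence for an arbitrary fixed $u$ and arbitrary $\alpha>0$, because the sharp embedding constants grow like $(jQ^\prime)^{(Q-1)/Q}$; after raising to the power $jQ^\prime$ and dividing by $j!$, the $j$-th term is comparable to $\big(C\,\alpha\,e\,\|u\|_{HW^{1,Q}}^{Q^\prime}\big)^{j}$, so the series converges only when $\alpha\|u\|_{HW^{1,Q}}^{Q^\prime}$ is small --- i.e.\ this argument reproves a weak form of the uniform inequality, not the claim for every $u$ and every $\alpha$. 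The correct repair is exactly the splitting $u=u_1+u_2$ (with $u_1\in\mathcal{C}^\infty_c$ and $\|u_2\|_{HW^{1,Q}}$ small) that you deploy on $B_1$, applied on all of $\mathbb{H}^N$. Second, in the supremum bound your one-dimensional reduction with $t\propto\log(1/\rho)$ only accounts for the region $r(\xi)\le 1$ and never uses the $L^Q$-part of the constraint $\|u\|_{HW^{1,Q}}\le 1$. On the whole space that part is indispensable: under a gradient-only normalization the supremum in \eqref{eq2.5} is already infinite for every $\alpha>0$ (spread-out plateau functions), so the set where $u^\ast\le 1$ (equivalently the complement of a ball of controlled measure, since $\|u\|_Q\le 1$ forces decay of $u^\ast$) must be handled separately, e.g.\ via $\Phi(\alpha s^{Q^\prime})\le C(\alpha)\,s^{Q}$ for $0\le s\le 1$ together with $\|u\|_Q\le 1$, before the one-dimensional Moser lemma with the extra linear term $-(1-\beta/Q)t$ is applied on the remaining set. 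With these two repairs (and granting, as you yourself flag, a rigorous Kor\'anyi rearrangement with finite $c^\ast$ controlling the horizontal gradient --- the step the cited authors circumvent and the reason the constant is $\alpha^\ast=\alpha_Q/c^\ast$ rather than $\alpha_Q$), the outline is consistent with the result the paper quotes.
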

Moreover, N. Lama et al. in \cite{MR2980499} established the Trudinger-Moser type inequality in $\mathbb{H}^N$ with a singular potential. Their result can be read as follows:
\begin{theorem}(\textbf{Trudinger–Moser inequality in $\mathbb{H}^N$-II })\label{thm2.7}
 Let $\tau$ be any positive real number. If for any pair $\beta,\alpha$ satisfying $0\leq\beta<Q$ and $0<\alpha\leq\alpha_Q\big(1-\frac{\beta}{Q}\big)$, where $\alpha_Q=Q\big(2\pi^N\Gamma(\frac{1}{2})\Gamma(\frac{Q-1}{2}){\Gamma(\frac{Q}{2})}^{-1}{\Gamma(N)}^{-1}\big)^{Q^\prime-1}$, then there holds
  \begin{align}\label{eq2.6}
 \sup_{\|u\|_{HW_\tau^{1,Q}}\leq 1} ~\int_{\mathbb{H}^N}\cfrac{\Phi\big(\alpha|u|^{Q^\prime}\big)}{r(\xi)^\beta}~\mathrm{d}\xi<+\infty, ~~\text{where}~\|u\|_{HW_\tau^{1,Q}}=(\tau\|u\|^Q_Q+\|D_H u\|^Q_Q)^{\frac{1}{Q}}    
 \end{align}
and $\Phi$ is defined as in $(f1)$. Consequently, if $\alpha>\alpha_Q\big(1-\frac{\beta}{Q}\big)$, then the integral in \eqref{eq2.6} is still finite for any $u\in HW^{1,Q}(\mathbb{H}^N)$, but the supremum is infinite.
\end{theorem}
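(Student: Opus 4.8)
The plan is to argue along the lines of \cite{MR2980499}. By density of $\mathcal{C}^\infty_c(\mathbb{H}^N)$ in $HW^{1,Q}(\mathbb{H}^N)$ and monotone convergence it suffices to bound the integral in \eqref{eq2.6} uniformly over nonnegative $u\in\mathcal{C}^\infty_c(\mathbb{H}^N)$ with $\tau\|u\|_Q^Q+\|D_Hu\|_Q^Q=1$, and I would split $\mathbb{H}^N=\{u\le1\}\cup\{u>1\}$, handling the two pieces differently. On $\{u\le1\}$ the integrand is subcritical: since $Q'(Q-1)=Q$, the power series of $\Phi$ gives $\Phi(\alpha u^{Q'})\le C(\alpha,Q)\,u^{Q}$ there, whence $\int_{\{u\le1\}}\Phi(\alpha u^{Q'})\,r(\xi)^{-\beta}\,\mathrm{d}\xi\le C\int_{\mathbb{H}^N}u^{Q}\,r(\xi)^{-\beta}\,\mathrm{d}\xi$; the latter is finite because on $\mathbb{H}^N\setminus B_1$ one has $r^{-\beta}\le1$ and $u\in L^Q$, while on $B_1$ Hölder's inequality applies with $u\in L^{q}(B_1)$ for every finite $q$ (Lemma \ref{lem2.2}) and $r^{-\beta}\in L^{p}(B_1)$ for some $p>1$ (possible exactly because $\beta<Q$). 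This piece is thus $\le C(\tau,\beta,Q)\|u\|_{HW^{1,Q}}^{Q}$, with no use of the critical constant.

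The whole critical content sits on $E:=\{u>1\}$, which by Chebyshev satisfies $|E|\le\tau^{-1}\|u\|_Q^{Q}\le\tau^{-1}$. The truncation $v:=(u-1)^+\in HW^{1,Q}(\mathbb{H}^N)$ is supported in $\overline E$ and has $\int_{\mathbb{H}^N}|D_Hv|_H^{Q}=\int_E|D_Hu|_H^{Q}\le1-\tau\|u\|_Q^{Q}\le1$. On $E$ one has $u=v+1$, and Young's inequality ($Q'>1$) gives $u^{Q'}\le(1+\eps)v^{Q'}+C_\eps$ for every $\eps>0$; with monotonicity of $\Phi$ and $e^{a+b}=e^ae^b$ this yields $\Phi(\alpha u^{Q'})\le e^{\alpha C_\eps}\,\Phi\big(\alpha(1+\eps)v^{Q'}\big)+(\text{terms of order}\le v^{Q})$, whose lower-order part integrates against $r^{-\beta}$ to a bounded quantity just as in the first step. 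So everything reduces to the sharp singular Trudinger--Moser inequality \emph{on a set of finite measure}: if $w\in HW^{1,Q}(\mathbb{H}^N)$ is supported in a set of measure $\le\tau^{-1}$ with $\int_{\mathbb{H}^N}|D_Hw|_H^{Q}\le1$, then $\int_{\mathbb{H}^N}\Phi(\kappa|w|^{Q'})\,r(\xi)^{-\beta}\,\mathrm{d}\xi\le C(\tau^{-1},\beta,Q)$ whenever $\kappa\le\alpha_Q(1-\beta/Q)$. In the strictly subcritical range $\alpha<\alpha_Q(1-\beta/Q)$ this finishes the proof: pick $\eps$ so small that $\alpha(1+\eps)\le\alpha_Q(1-\beta/Q)$.

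The main obstacle is the borderline case $\alpha=\alpha_Q(1-\beta/Q)$, and underneath it the finite-measure inequality \emph{at its own critical exponent}. I would establish it by symmetrizing $w$ onto a Kor\'anyi ball $B_R$ with $|B_R|=|\{w\ne0\}|$ (the $Q$-Dirichlet energy not increasing under this rearrangement, up to the constant absorbed into the relation between $\kappa$ and $\alpha_Q$) and then performing the substitution $r=Re^{-t/Q}$, which sends $r(\xi)^{-\beta}\,\mathrm{d}\xi$ to $\mathrm{const}\cdot e^{-(1-\beta/Q)t}\,\mathrm{d}t$ and the energy to $\mathrm{const}\cdot\int_0^\infty|\phi'|^{Q}\,\mathrm{d}t$, $\phi$ the radial profile. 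The claim then becomes the weighted one-dimensional Moser lemma: if $\phi(0)=0$ and $\int_0^\infty|\phi'|^{Q}\,\mathrm{d}t\le1$, then $\int_0^\infty\exp\big(a\,\phi(t)^{Q'}\big)e^{-bt}\,\mathrm{d}t$ is bounded uniformly in such $\phi$ exactly when $a\le b$; here $b=1-\beta/Q>0$ (positivity is where $\beta<Q$ enters, and also what makes the polynomial subtracted in $\Phi$ contribute only an integrable remainder). By the scaling $t\mapsto(1-\beta/Q)t$ this reduces to Moser's classical sharp lemma ($b=1$), whose sole subtlety — the uniform bound \emph{at} $a=1$ — is dispatched by the standard refinement of the Hölder estimate $\phi(t)\le t^{1/Q'}\big(\int_0^t|\phi'|^{Q}\big)^{1/Q}$ on the set where $\int_0^t|\phi'|^{Q}$ is already near $1$; carrying the constants $\sigma_Q$ and $Q$ through the change of variables on $\mathbb{H}^N$ is exactly what turns $a\le b$ into $\kappa\le Q\sigma_Q^{1/(Q-1)}(1-\beta/Q)=\alpha_Q(1-\beta/Q)$. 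A Carleson--Chang type blow-up analysis is an alternative route to the same endpoint.

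Finally, for $\alpha>\alpha_Q(1-\beta/Q)$ the supremum is $+\infty$: one tests with the Heisenberg Moser functions $m_k$ equal to a constant $M_k$ on $B_{\rho_k}$ ($\rho_k\downarrow0$), $Q$-harmonic and hence logarithmic on $B_1\setminus B_{\rho_k}$, and $0$ outside; the normalization $\|D_Hm_k\|_Q\le1$ forces $\alpha M_k^{Q'}=\tfrac{\alpha}{\sigma_Q^{1/(Q-1)}}\log\tfrac1{\rho_k}+O(1)$, while $\int_{B_{\rho_k}}r(\xi)^{-\beta}\,\mathrm{d}\xi\asymp\rho_k^{Q-\beta}$, so with $\alpha_Q(1-\beta/Q)=(Q-\beta)\sigma_Q^{1/(Q-1)}$ one gets $\int_{\mathbb{H}^N}\Phi(\alpha m_k^{Q'})\,r(\xi)^{-\beta}\,\mathrm{d}\xi\gtrsim\exp\!\Big(\tfrac{\alpha-\alpha_Q(1-\beta/Q)}{\sigma_Q^{1/(Q-1)}}\log\tfrac1{\rho_k}\Big)\to\infty$, and since $\|m_k\|_Q\to0$ one has $\|m_k\|_{HW^{1,Q}_\tau}\to1$, so renormalizing keeps the blow-up. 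The remaining assertion — finiteness of the integral for \emph{every} fixed $u\in HW^{1,Q}(\mathbb{H}^N)$ even above the threshold — follows by writing $u=u_1+u_2$ with $u_1\in\mathcal{C}^\infty_c(\mathbb{H}^N)$ and $\|u_2\|_{HW^{1,Q}}$ arbitrarily small and applying Hölder's inequality (splitting $r^{-\beta}=r^{-\beta/p}r^{-\beta/p'}$) together with Corollary \ref{cor2.5} and the inequality just proved to the small piece, the piece $u_1$ being harmless as it is bounded with compact support.
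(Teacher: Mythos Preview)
The paper does not prove Theorem \ref{thm2.7}; it merely quotes the result from \cite{MR2980499} (Lam--Lu) as a preliminary tool, so there is no ``paper's own proof'' to compare against. Your sketch is therefore not redundant with anything in the text, and it follows the strategy of the very reference the paper cites: split into the subcritical region $\{u\le1\}$ (handled by $\Phi(\alpha t^{Q'})\lesssim t^Q$ and the integrability of $r^{-\beta}$ near the origin), truncate on the superlevel set $\{u>1\}$ of finite measure, absorb the shift via Young's inequality, and reduce to a sharp finite-measure singular Moser inequality whose one-dimensional core is Moser's lemma after the change of variables $r=Re^{-t/Q}$.

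One caution: the step ``symmetrize $w$ onto a Kor\'anyi ball \ldots\ the $Q$-Dirichlet energy not increasing under this rearrangement, up to the constant absorbed into the relation between $\kappa$ and $\alpha_Q$'' is exactly the delicate point on $\mathbb{H}^N$. Unlike in $\mathbb{R}^N$, a clean P\'olya--Szeg\H{o} inequality for the horizontal gradient under Kor\'anyi symmetrization is not available; what Lam--Lu actually do is pass through the Euclidean rearrangement of the level sets (identifying $\mathbb{H}^N$ with $\mathbb{R}^{2N+1}$ as measure spaces) and invoke the Cohn--Lu sharp inequality on bounded domains, which already encodes the correct constant $\alpha_Q$. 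Your parenthetical ``up to the constant absorbed'' hides a nontrivial argument, and if you intend this as more than a heuristic you should point to the precise rearrangement used and where the constant $\sigma_Q$ (or the equivalent $\alpha_Q$ in the statement) enters. The sharpness construction and the finiteness-for-fixed-$u$ argument are standard and correctly outlined.
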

\begin{remark}
    To study the problem \eqref{main problem}, we use Theorem \ref{thm2.7} with $\beta=0$ and $\tau=1$.
\end{remark}
Now we state the doubly weighted Hardy–Littlewood–Sobolev inequality in the Heisenberg group $\mathbb{H}^N$, which was proved by X. Han et al. in \cite{MR2921990}.

\begin{theorem}(\textbf{Doubly Weighted Hardy-Littlewood-Sobolev inequality in $\mathbb{H}^N$ })\label{thm2.9}
Let $1<\textbf{r},~\textbf{s}<\infty$,~$0<\lambda<Q=2N+2$ and $\alpha+\beta\geq 0$ such that $\lambda+\alpha+\beta\leq Q$,~$\alpha<\frac{Q}{\textbf{r}^\prime}$, $\beta<\frac{Q}{\textbf{s}^\prime}$, ( where $\frac{1}{\zeta}+\frac{1}{\zeta^\prime}=1~\text{for }~\zeta=\textbf{r},\textbf{s}$) and $\frac{1}{\textbf{r}}+\frac{1}{\textbf{s}}+\frac{\lambda+\alpha+\beta}{Q}=2$, then there exists a positive constant $C_{\alpha,\beta,\textbf{r},\lambda,N}$ independent of the function $f\in L^\textbf{r}(\mathbb{H}^N)$ and $g\in L^\textbf{s}(\mathbb{H}^N)$ such that 
$$\bigg|\displaystyle\int_{\mathbb{H}^N}\displaystyle\int_{\mathbb{H}^N}\cfrac{f(\xi)g(\xi^\prime)}{r(\xi)^{\alpha }~{d_K(\xi,\xi^\prime)}^\lambda r(\xi^\prime)^{\beta}}~\mathrm{d}\xi\mathrm{d}\xi^\prime\bigg|\leq C_{\alpha,\beta,\textbf{r},\lambda,N}\|f\|_{\textbf{r}}\|g\|_{\textbf{s}} . $$
Further, let 
$$ S(g(\xi))=\displaystyle \int_{\mathbb{H}^N} \frac{g(\xi^\prime)}{r(\xi)^{\alpha }~{d_K(\xi,\xi^\prime)}^\lambda r(\xi^\prime)^{\beta}}~\mathrm{d}\xi^\prime, $$
then there exists a constant $C_{\alpha,\beta,\textbf{s},\lambda,N}>0$ independent of $g\in L^\textbf{s}(\mathbb{H}^N)$ such that for any $t$ satisfying the relations $\alpha<\frac{Q}{t}$ and $1+\frac{1}{t}=\frac{1}{\textbf{s}}+\frac{\lambda+\alpha+\beta}{Q}$, we have
$$\|S(g)\|_t\leq C_{\alpha,\beta,\textbf{s},\lambda,N} \|g\|_{\textbf{s}}. $$
\end{theorem}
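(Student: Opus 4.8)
The plan is to adapt the classical Stein--Weiss argument to the homogeneous structure of $\mathbb{H}^N$. The features that replace their Euclidean analogues are: the Kor\'anyi norm $r(\cdot)$, homogeneous of degree $1$ under the dilations $\delta_R$ and symmetric in the sense $r(\xi^{-1})=r(\xi)$; the Kor\'anyi distance $d_K$, which satisfies a pseudo-triangle inequality $d_K(\xi,\eta)\le\gamma_0\big(d_K(\xi,\zeta)+d_K(\zeta,\eta)\big)$ for some $\gamma_0\ge 1$; the $Q$-homogeneity of the Haar measure under $\delta_R$ together with the corresponding polar-type coordinates; and the group convolution, for which the kernel $r^{-\lambda}$ belongs to the weak Lebesgue space $L^{Q/\lambda,\infty}(\mathbb{H}^N)$ since $|\{r<\rho\}|=\rho^Q|B_1|$. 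I would first reduce the operator statement to the bilinear one: by duality $\|S(g)\|_t=\sup\{\int_{\mathbb{H}^N}f\,S(g)\,\mathrm{d}\xi:\|f\|_{t'}\le 1\}$, and the prescribed relation $1+\tfrac1t=\tfrac1{\mathbf s}+\tfrac{\lambda+\alpha+\beta}{Q}$ is exactly $\tfrac1{t'}+\tfrac1{\mathbf s}+\tfrac{\lambda+\alpha+\beta}{Q}=2$, so the second conclusion follows from the first applied with $\mathbf r=t'$.

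For the bilinear estimate, I would begin by noting that replacing $f,g$ by their $\delta_R$-rescalings and letting $R\to 0$ and $R\to\infty$ forces the balance condition $\tfrac1{\mathbf r}+\tfrac1{\mathbf s}+\tfrac{\lambda+\alpha+\beta}{Q}=2$; this shows the exponents are critical, so it is natural to establish the bound piece by piece, at times only at restricted weak type, and assemble the pieces by Marcinkiewicz interpolation. The core of the proof is then a decomposition of $\mathbb{H}^N\times\mathbb{H}^N$ into the three regions $\{r(\xi)\le c\,d_K(\xi,\xi')\}$, $\{r(\xi')\le c\,d_K(\xi,\xi')\}$, and $\{d_K(\xi,\xi')< c^{-1}\min(r(\xi),r(\xi'))\}$, which manifestly cover the product. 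With the pseudo-triangle inequality (choosing $c$ small relative to $\gamma_0$) one checks that on the first region $d_K(\xi,\xi')\sim r(\xi')$ and $r(\xi)\lesssim r(\xi')$, so the three-factor kernel collapses to the separated weight $r(\xi)^{-\alpha}r(\xi')^{-(\lambda+\beta)}$ restricted to $\{r(\xi)\lesssim r(\xi')\}$; the second region is the mirror image; and on the third region $r(\xi)\sim r(\xi')$, so the two Kor\'anyi weights fuse into $r(\xi)^{-(\alpha+\beta)}$.

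On the first two regions, passing to polar coordinates turns the bilinear form into a one-dimensional Hardy--Littlewood--P\'olya inequality in the radial variable; the convergence of the arising radial integrals is precisely where the restrictions $\alpha<Q/\mathbf r'$ and $\beta<Q/\mathbf s'$ are used. On the third region one further decomposes into dyadic Kor\'anyi annuli $\{r(\xi)\sim 2^k\}$, on each of which one applies the \emph{unweighted} Hardy--Littlewood--Sobolev inequality on $\mathbb{H}^N$---itself a consequence of $r^{-\lambda}\in L^{Q/\lambda,\infty}(\mathbb{H}^N)$ and the weak Young (O'Neil) inequality for group convolution, together with the exponent relation $\tfrac1{\mathbf r}+\tfrac1{\mathbf s}+\tfrac\lambda Q=2$ obtained by stripping the weights---and then sums the resulting geometric series in $k$, the summability being guaranteed by $\alpha+\beta\ge 0$ (and $\lambda+\alpha+\beta\le Q$, which keeps the per-annulus HLS exponents admissible). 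Combining the three contributions and, where needed, interpolating, yields the asserted bilinear bound.

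I expect the principal difficulty to be careful bookkeeping rather than any single deep ingredient: one must tune the threshold $c$ so that the loss $\gamma_0$ in the pseudo-triangle inequality does not spoil either the covering of $\mathbb{H}^N\times\mathbb{H}^N$ or the comparabilities used within each region, and one must establish the Heisenberg Hardy-type inequalities exactly over the weight ranges $\alpha<Q/\mathbf r'$, $\beta<Q/\mathbf s'$ demanded by the statement (these, not $0<\lambda<Q$ or $\lambda+\alpha+\beta\le Q$, are the delicate constraints). The non-commutativity of $\mathbb{H}^N$ is a comparatively minor issue: $r(\xi^{-1}\circ\xi')^{-\lambda}$ is a bona fide convolution kernel because $r(\xi^{-1})=r(\xi)$, and in the weighted regions, where left translation invariance is lost, the $\delta_R$-homogeneity of the measure is the only symmetry one needs and it is available.
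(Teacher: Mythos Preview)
The paper does not prove Theorem~\ref{thm2.9} at all: it is stated in the preliminaries as a known result and attributed to X.~Han et al.\ (the reference \cite{MR2921990}), with no proof given. Your proposal, by contrast, sketches a genuine proof along the classical Stein--Weiss lines adapted to the homogeneous structure of $\mathbb{H}^N$; this is essentially the approach of the cited source, and your outline is sound. One small remark: on $\mathbb{H}^N$ the Kor\'anyi gauge in fact satisfies an exact triangle inequality (a result of Cygan), so you may take $\gamma_0=1$ and the bookkeeping with the threshold $c$ simplifies accordingly, though your more cautious quasi-metric formulation would also go through.
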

\section{Existence of the first solution}\label{sec3}
In this section, we will prove Theorem \ref{thm1.1}. Going forward, it will be assumed, without mentioning that the structural assumptions stated in Theorem \ref{thm1.1} are satisfied. Evidently, \eqref{main problem} has a variational structure. Indeed, since we are interested to study the nonnegative solutions of \eqref{main problem}, we define the Euler–Lagrange functional $J:\mathbf{X}\to\mathbb{R}$ associated with \eqref{main problem} by
\begin{align}\label{eq3.1}
 J(u)=\frac{1}{p}\|u\|^p_{HW^{1,p}}+\frac{1}{Q}\|u\|^Q_{HW^{1,Q}}-\frac{\mu}{s}\int_{\mathbb{H}^N}g(\xi)(u^+)^s~\mathrm{d}\xi-\frac{\gamma}{2}\int_{\mathbb{H}^N}\Bigg(\displaystyle\int_{\mathbb{H}^N}\frac{F(\eta,u)}{r(\eta)^\beta {d_K(\xi,\eta)}^\lambda}~\mathrm{d}\eta\Bigg)\frac{F(\xi,u)}{r(\xi)^\beta}~\mathrm{d}\xi.   
\end{align}
Notice that under the assumptions $(f1)$, one can easily verify that for any $\varepsilon>0$, $\nu\geq Q$, there exists positive constant $\widetilde{C}_\varepsilon=\widetilde{C}_\varepsilon(\nu)$ such that for all $(\xi,u)\in \mathbb{H}^N\times \mathbb{R}$, we have
\begin{align}\label{eq3.2}
\begin{cases}
 |f(\xi,u)|\leq \varepsilon |u|^{Q-1}+\widetilde{C}_\varepsilon |u|^{\nu-1}\Phi(\alpha_0 |u|^{Q^\prime});\\
 |F(\xi,u)|\leq \varepsilon |u|^{Q}+\widetilde{C}_\varepsilon |u|^\nu\Phi(\alpha_0 |u|^{Q^\prime}).    
\end{cases}
\end{align}
Consequently, in view of Corollary \ref{cor2.3}, we can see that for any $u\in \mathbf{X}$, there holds
\begin{align}\label{eq3.3}
F(\xi,u)\in L^\eta(\mathbb{H}^N)~~\text{for any}~~\eta\geq1.    
\end{align} Now since \eqref{eq3.3} holds, therefore by Theorem \ref{thm2.9} with $\textbf{r}=\textbf{s}$ and $\alpha=\beta$, we conclude that
\begin{align}\label{eq3.4}    \Bigg|\int_{\mathbb{H}^N}\Bigg(\displaystyle\int_{\mathbb{H}^N}\frac{F(\eta,u)}{r(\eta)^\beta {d_K(\xi,\eta)}^\lambda}~\mathrm{d}\eta\Bigg)\frac{F(\xi,u)}{r(\xi)^\beta}~\mathrm{d}\xi\Bigg|\leq C(\beta,\lambda,N)\|F(\cdot,u)\|^2_{\frac{2Q}{2Q-2\beta-\lambda}}.
\end{align}
The above inequality together with Theorem \ref{thm2.7} ensures that $J$ is well-defined, of class $ \mathcal{C}^1(\mathbb{H}^N,\mathbb{R})$ and its Gâteaux derivative is defined by 
\begin{align}\label{eq3.5}   \langle{J^\prime(u),v}\rangle=\big\langle{u,v}\big\rangle_{H,p}+\big\langle{u,v}\big\rangle_{H,Q}-\mu\displaystyle\int_{\mathbb{H}^N} g(\xi)(u^+)^{s-1}v~\mathrm{d}\xi-\gamma \displaystyle\int_{\mathbb{H}^N}\Bigg(\displaystyle\int_{\mathbb{H}^N}\frac{F(\eta,u)}{r(\eta)^\beta {d_K(\xi,\eta)}^\lambda}~\mathrm{d}\eta\Bigg)\frac{f(\xi,u)}{r(\xi)^\beta} v~\mathrm{d}\xi,
\end{align}
 for all $u,v\in\mathbf{X}$, where $\big\langle{u,v}\big\rangle_{H,t}$ is defined as in \eqref{eq1.1} for $t\in\{p,Q\}$ and $\langle{\cdot,\cdot}\rangle$ denotes the duality order pair between the dual of $\mathbf{X}$ $(~\text{denoted by}~\mathbf{X}^\ast)$  and $\mathbf{X}$ . Note that critical points of $J$ are exactly weak solutions of \eqref{main problem}. Further, the following lemma shows that every nontrivial weak solution of \eqref{main problem} is nonnegative.
 \begin{lemma}\label{lem3.1}
     The nontrivial critical points of the energy functional $J$ are nonnegative.
 \end{lemma}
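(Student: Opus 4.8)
The plan is to test the functional $J$ against the negative part of a critical point and show that this part must vanish. Let $u \in \mathbf{X}$ be a nontrivial critical point of $J$, so that $\langle J'(u), v\rangle = 0$ for all $v \in \mathbf{X}$. The natural choice is $v = u^- = \max\{-u, 0\} \in \mathbf{X}$ (membership in $\mathbf{X}$ follows since $u^-$ has weaker-or-equal integrability and $|D_H u^-|_H \le |D_H u|_H$ a.e., using the chain rule $D_H u^- = -D_H u \cdot \chi_{\{u < 0\}}$ valid in the Heisenberg-Sobolev setting). First I would compute each term of $\langle J'(u), u^-\rangle$ using the expression \eqref{eq3.5}.

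The key observation is that the right-hand-side terms kill the negative part by construction. Since $f(\xi, t) = 0$ for all $t \le 0$ by $(f1)$, and similarly $F(\xi, t) = 0$ for $t \le 0$, on the set $\{u < 0\}$ both $f(\xi, u)$ and $F(\xi, u)$ vanish, so the Choquard term $\gamma \int_{\mathbb{H}^N}\big(\int_{\mathbb{H}^N} \frac{F(\eta,u)}{r(\eta)^\beta d_K(\xi,\eta)^\lambda}\,\mathrm{d}\eta\big)\frac{f(\xi,u)}{r(\xi)^\beta} u^-\,\mathrm{d}\xi = 0$ (the factor $f(\xi,u) u^-$ is zero pointwise — either $u \ge 0$ making $u^- = 0$, or $u < 0$ making $f(\xi,u) = 0$). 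Likewise the weight term $\mu \int_{\mathbb{H}^N} g(\xi)(u^+)^{s-1} u^-\,\mathrm{d}\xi = 0$ because $(u^+)^{s-1} u^- = 0$ pointwise. Hence $\langle J'(u), u^-\rangle$ reduces to just $\langle u, u^-\rangle_{H,p} + \langle u, u^-\rangle_{H,Q} = 0$.

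Next I would analyze the remaining two bracket terms. On $\{u \ge 0\}$ we have $u^- = 0$ and $D_H u^- = 0$, while on $\{u < 0\}$ we have $u^- = -u$ and $D_H u^- = -D_H u$. Therefore, for $t \in \{p, Q\}$,
\begin{align*}
\langle u, u^-\rangle_{H,t} &= \int_{\{u<0\}}\Big(|D_H u|_H^{t-2}(D_H u, -D_H u)_H + |u|^{t-2} u \cdot (-u)\Big)\,\mathrm{d}\xi \\
&= -\int_{\{u<0\}}\Big(|D_H u|_H^{t} + |u|^{t}\Big)\,\mathrm{d}\xi = -\|u^-\|_{HW^{1,t}}^t.
\end{align*}
Summing over $t = p$ and $t = Q$ gives $\|u^-\|_{HW^{1,p}}^p + \|u^-\|_{HW^{1,Q}}^Q = 0$, which forces $u^- = 0$ in both $HW^{1,p}(\mathbb{H}^N)$ and $HW^{1,Q}(\mathbb{H}^N)$, hence $u^- = 0$ a.e. and $u = u^+ \ge 0$. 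Since $u$ is nontrivial, $u$ is a nonnegative (and not identically zero) weak solution.

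The only genuinely delicate point is the justification that $u^- \in \mathbf{X}$ with the correct chain rule for the horizontal gradient — that is, that the truncation $t \mapsto \max\{-t,0\}$ operates on $HW^{1,\wp}(\mathbb{H}^N)$ exactly as in the Euclidean case, giving $D_H u^- = -D_H u\,\chi_{\{u<0\}}$ a.e. This is standard for Sobolev spaces on Carnot groups (it follows from the density of smooth functions noted in Section~\ref{sec2} together with a locality argument for the horizontal gradient), so I would simply cite it; everything else is a direct pointwise computation exploiting the sign condition in $(f1)$.
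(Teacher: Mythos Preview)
Your proof is correct and follows essentially the same approach as the paper's own argument: test the critical-point equation against $u^-$, observe that the nonlinear terms vanish by the sign condition in $(f1)$ and the pointwise disjointness of $u^+$ and $u^-$, and deduce $\|u^-\|_{HW^{1,p}}^p + \|u^-\|_{HW^{1,Q}}^Q = 0$. The paper's version is terser, stating the identity directly from \eqref{eq3.5} without spelling out the vanishing of each term or the chain rule for $D_H u^-$, whereas you make all of this explicit.
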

\begin{proof}
    Let $u\in\mathbf{X}\setminus\{0\}$ be a critical point of the energy functional $J$. Set $u=u^+-u^-$ and choose $v=u^-$ as a test function in $\mathbf{X}$, then from \eqref{eq3.5}, we obtain
    $$\|u^-\|^p_{HW^{1,p}}+\|u^-\|^Q_{HW^{1,Q}}=0. $$
    This shows that $\|u^-\|_{HW^{1,p}}=\|u^-\|_{HW^{1,Q}}=0$ and hence $\|u^-\|_{\mathbf{X}}=0$. Therefore, we infer that $u^-=0$ a.e. in $\mathbb{H}^N$ and consequently, we have $u=u^+\geq 0$  a.e. in $\mathbb{H}^N$. This completes the proof.
\end{proof}
The following two lemma show that the functional $J$ satisfies the well-known mountain pass geometrical structures.
\begin{lemma}\label{lem3.2}
      There exist $\rho\in(0,1]$ and two positive constants $\mu^\ast$ and $\jmath$ depending upon $\rho$ such that $J(u)\geq \jmath$ for all $u\in \mathbf{X}$ with $\|u\|_{\mathbf{X}}=\rho$ and all $\mu\in(0,\mu^\ast]$ and $\gamma>0$.  
\end{lemma}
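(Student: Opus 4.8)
The goal is the standard mountain–pass ``valley'' estimate: for $\|u\|_{\mathbf X}=\rho$ small, $J(u)$ stays above a positive level $\jmath$, provided $\mu$ is small. The plan is to bound from below the two positive leading terms $\tfrac1p\|u\|_{HW^{1,p}}^p+\tfrac1Q\|u\|_{HW^{1,Q}}^Q$ and bound from above the two subtracted terms, namely the weighted $L^s_g$ term and the Stein--Weiss--Choquard term, and then to choose $\rho$ so small that, after absorbing constants, the Choquard term is of higher order in $\rho$ and the $L^s_g$ term is controlled by a small multiple of $\mu$. First I would use the elementary fact that on $\{\|u\|_{\mathbf X}=\rho\}$ with $\rho\le1$ one has a two–sided comparison between $\|u\|_{\mathbf X}$ and the pair of norms $\|u\|_{HW^{1,p}},\|u\|_{HW^{1,Q}}$; since $\rho\le1$ the higher powers dominate, so $\tfrac1p\|u\|_{HW^{1,p}}^p+\tfrac1Q\|u\|_{HW^{1,Q}}^Q\ge c_0\,\rho^{\max\{p,Q\}}=c_0\rho^Q$ (or, more conveniently, $\ge c_0\rho^p$ if one prefers a clean lower bound valid for $\rho\le1$).

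Next I would estimate the lower–order term: by Lemma \ref{lem2.4}, $\int_{\mathbb H^N} g(u^+)^s\,\mathrm d\xi=\|u^+\|_{s,g}^s\le \|g\|_\vartheta\,\|u\|_Q^s\le C\|u\|_{\mathbf X}^s=C\rho^s$, so this term contributes at most $\tfrac{\mu}{s}C\rho^s$. For the Choquard term, by \eqref{eq3.4} we have the upper bound $\tfrac{\gamma}{2}C(\beta,\lambda,N)\,\|F(\cdot,u)\|_{\frac{2Q}{2Q-2\beta-\lambda}}^2$, and then using \eqref{eq3.2} (the bound on $|F(\xi,u)|$ with a choice of $\nu\ge Q$), Corollary \ref{cor2.5}, Hölder's inequality and the Trudinger--Moser inequality of Theorem \ref{thm2.7}, one estimates $\|F(\cdot,u)\|_{\frac{2Q}{2Q-2\beta-\lambda}}\le \varepsilon\|u\|_{\mathbf X}^Q+C_\varepsilon\|u\|_{\mathbf X}^\nu$ as long as $\|u\|_{\mathbf X}$ is small enough that the exponential moment stays bounded (this is where $\rho\le1$ and the subcriticality $\alpha_0<\alpha_Q$ are used — choose, say, $\nu=Q+1$ and first fix $\varepsilon$). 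Squaring, the Choquard term is $O(\rho^{2Q})$, hence negligible compared to $\rho^Q$ for $\rho$ small.

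Putting the pieces together gives, for $\|u\|_{\mathbf X}=\rho\le1$,
\begin{equation}\label{eq3.6aux}
J(u)\ \ge\ c_0\,\rho^{Q}-C_1\gamma\,\rho^{2Q}-\frac{\mu}{s}\,C_2\,\rho^{s}.
\end{equation}
Since $s<Q$, I would first fix $\rho\in(0,1]$ small enough that $C_1\gamma\rho^{2Q}\le \tfrac12 c_0\rho^Q$ (note $\rho$ then depends on $\gamma$, which is allowed by the statement), obtaining $J(u)\ge \tfrac12 c_0\rho^Q-\tfrac{\mu}{s}C_2\rho^s$. Then I would set $\mu^\ast:=\tfrac{s c_0\rho^{Q-s}}{4C_2}$ so that for all $\mu\in(0,\mu^\ast]$ the last term is at most $\tfrac14 c_0\rho^Q$, and conclude $J(u)\ge \tfrac14 c_0\rho^Q=:\jmath>0$.

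The main obstacle is the Choquard term: one must justify that $\|F(\cdot,u)\|_{\frac{2Q}{2Q-2\beta-\lambda}}$ is controlled by a power of $\|u\|_{\mathbf X}$ uniformly on a small ball, which requires combining the growth bound \eqref{eq3.2}, the power-of-$\Phi$ inequality of Corollary \ref{cor2.5}, Hölder, and — crucially — Theorem \ref{thm2.7} applied to $u/\|u\|_{HW^{1,Q}}$ with the product $\alpha_0\|u\|_{HW^{1,Q}}^{Q'}$ kept below the critical threshold $\alpha_Q$; this is exactly where the hypothesis $\alpha_0<\alpha$ (and $\rho\le1$) is indispensable. Everything else is a routine application of the embeddings in Corollary \ref{cor2.3} and Lemma \ref{lem2.4} together with the ordering $s<p<Q$.
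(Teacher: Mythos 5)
Your proposal is correct and follows essentially the same route as the paper: lower-bound the leading terms by $c\,\rho^{Q}$ using $\rho\le 1$ and the norm comparison, control the weighted $s$-term via Lemma \ref{lem2.4}/H\"older, and bound the Choquard term by $C\rho^{2Q}$ through \eqref{eq3.4}, \eqref{eq3.2} with $\nu=Q+1$, Corollary \ref{cor2.5} and Theorem \ref{thm2.7} applied after normalizing so that the exponent stays below $\alpha_Q$, then pick $\rho$ small and $\mu^\ast\sim\rho^{Q-s}$. The only cosmetic difference is that you fix $\rho$ by a ``half-absorption'' inequality (keeping the $\gamma$-dependence explicit) while the paper maximizes the auxiliary function $h(\ell)=\frac{\ell^Q}{2^QQ}-C\ell^{2Q}$ with $\gamma$ absorbed into $C$; the substance is identical.
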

\begin{proof}
   Choose $\nu=Q+1$, then from \eqref{eq3.1} and using the fact that $\alpha>\alpha_0$, we obtain from Corollary \ref{cor2.5} that 
    \begin{align}\label{eq3.6}
       |F(\xi,u)|\leq \varepsilon |u|^{Q}+\widetilde{C}_\varepsilon |u|^{Q+1}\Phi(\alpha |u|^{Q^\prime}),~~\forall~(\xi,u)\in  \mathbb{H}^N\times \mathbb{R}.
    \end{align}
    Let $\delta\in(0,1]$ be sufficiently small enough such that $0<\|u\|_{\mathbf{X}}\leq \delta$. Then using \eqref{eq3.4}, \eqref{eq3.6}, the H\"older's inequality, Corollary \ref{cor2.3} and Corollary \ref{cor2.5}, we obtain  
\begin{align*}
\Bigg|\int_{\mathbb{H}^N}\Bigg(\displaystyle\int_{\mathbb{H}^N}\frac{F(\eta,u)}{r(\eta)^\beta {d_K(\xi,\eta)}^\lambda}~\mathrm{d}\eta\Bigg)\frac{F(\xi,u)}{r(\xi)^\beta}~\mathrm{d}\xi\Bigg|&\leq  C\Bigg[\displaystyle\int_{\mathbb{H}^N} \bigg(\varepsilon |u|^{Q}+\widetilde{C}_\varepsilon |u|^{Q+1}\Phi(\alpha |u|^{Q^\prime})\bigg)^{\frac{2Q}{2Q-2\beta-\lambda}}~\mathrm{d}\xi\Bigg]^{\frac{2Q-2\beta-\lambda}{Q}} \notag\\
& \leq C\Bigg[\|u\|^{2Q}_{\frac{2Q^2}{2Q-2\beta-\lambda}}+\Bigg(\displaystyle\int_{\mathbb{H}^N} |u|^{\frac{2Q(Q+1)}{2Q-2\beta-\lambda}}\Phi\bigg(\frac{2Q\alpha |u|^{Q^\prime} }{2Q-2\beta-\lambda} \bigg)~\mathrm{d}\xi\Bigg)^{\frac{2Q-2\beta-\lambda}{Q}}\Bigg] \notag\\
& \leq C\Bigg[ \|u\|^{2Q}_{\frac{2Q^2}{2Q-2\beta-\lambda}}+\|u\|^{2(Q+1)}_{\frac{4Q(Q+1)}{2Q-2\beta-\lambda}} \Bigg(\displaystyle\int_{\mathbb{H}^N}\Phi\bigg(\frac{4Q\alpha \|u\|_{\mathbf{X}}^{Q^\prime}|\widetilde{u}|^{Q^\prime} }{2Q-2\beta-\lambda} \bigg)\mathrm{d}\xi\Bigg)^{\frac{2Q-2\beta-\lambda}{2Q}}\Bigg],
\end{align*}
where $\widetilde{u}=u/\|u\|_{\mathbf{X}}$ and hence one can notice that $\|\widetilde{u}\|_{HW^{1,Q}}\leq \|\widetilde{u}\|_{\mathbf{X}}=1$. Since $\|u\|_{\mathbf{X}}$ is very small enough, without loss of generality we can choose $\frac{4Q\alpha \|u\|_{\mathbf{X}}^{Q^\prime} }{2Q-2\beta-\lambda}\leq \alpha_Q$. Now by applying Theorem \ref{thm2.7}, we deduce that
\begin{align}\label{eq3.7}
\Bigg|\int_{\mathbb{H}^N}\Bigg(\displaystyle\int_{\mathbb{H}^N}\frac{F(\eta,u)}{r(\eta)^\beta {d_K(\xi,\eta)}^\lambda}~\mathrm{d}\eta\Bigg)\frac{F(\xi,u)}{r(\xi)^\beta}~\mathrm{d}\xi\Bigg|&\leq  C\Bigg[ \|u\|^{2Q}_{\frac{2Q^2}{2Q-2\beta-\lambda}}+\|u\|^{2(Q+1)}_{\frac{4Q(Q+1)}{2Q-2\beta-\lambda}} \Bigg] \notag\\
& \leq C \big(\|u\|^{2Q}_{\mathbf{X}}+ \|u\|^{2(Q+1)}_{\mathbf{X}}\big)\leq C\|u\|^{2Q}_{\mathbf{X}},
\end{align}
where the last step in the above inequality is obtained by using $0<\|u\|_{\mathbf{X}}\leq 1$ and also we used that the constant $C>0$ in the above estimates varies from step to step.\\
Consequently, from \eqref{eq3.1}, \eqref{eq3.7} along with the H\"older's inequality and Corollary \ref{cor2.3}, it follows that
\begin{align}\label{eq3.8}
  J(u)&\geq \frac{1}{2^{Q-1}Q}\|u\|^{Q}_{\mathbf{X}}-\frac{\mu}{s} \|g\|_{\vartheta}\|u\|^s_{\mathbf{X}}- C\|u\|^{2Q}_{\mathbf{X}}.
\end{align}
Define $$h(\ell)=\frac{\ell^Q}{2^Q Q}-C\ell^{2Q}~\text{for all}~\ell\in(0,\delta].$$ Then one can observe that $h$ admits a positive maximum $\jmath$ on $(0,\delta]$ at a point $\rho\in(0,\delta]$. Moreover, for all $u\in\mathbf{X}$ with $\|u\|_{\mathbf{X}}=\rho$, we obtain from \eqref{eq3.8} that
$$J(u)\geq \frac{\rho^Q}{2^{Q-1}Q} -\frac{\mu}{s} \|g\|_{\vartheta} ~\rho^s- C\rho^{2Q}\geq h(\rho)=\jmath>0~~\text{for all}~ \mu\in(0,\mu^\ast]~\text{with}~ \mu^\ast=\frac{s\rho^{Q-s}}{2^Q Q\|g\|_{\vartheta}}. $$
This finishes the proof.   
\end{proof}
\begin{lemma}\label{lem3.3}
    There exists a nonnegative function $e\in \mathbf{X}$, independent of $\mu$ such that $\|e\|_{\mathbf{X}}>\rho$ and $J(e)<0$ for all $\mu>0$ and for all $\gamma>0$.
\end{lemma}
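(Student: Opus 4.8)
The plan is to exhibit $e$ of the form $e=t_0u_0$, where $u_0$ is chosen once and for all, independently of both parameters, and $t_0>0$ is taken large; since the term of $J$ carrying $\mu$ will be nonpositive along the ray $\{tu_0:t\ge 0\}$, the resulting $e$ will serve for every $\mu>0$ and will not depend on $\mu$. Concretely, I would fix $u_0\in\mathcal{C}_c^\infty(\mathbb{H}^N)$ with $u_0\ge 0$, $u_0\not\equiv 0$ and $u_0\equiv 1$ on a Kor\'anyi ball $B\subset\mathbb{H}^N$ of radius $R_0$ centered at the origin. It then suffices to prove that $J(tu_0)\to-\infty$ as $t\to+\infty$: for a given $\gamma>0$ this produces $t_0\ge 1$ with $\|t_0u_0\|_{\mathbf X}=t_0\|u_0\|_{\mathbf X}>\rho$ and $J(t_0u_0)<0$, and we set $e:=t_0u_0\ge 0$ (which depends on $\gamma$ but not on $\mu$).

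I would estimate the three contributions to $J(tu_0)$ in \eqref{eq3.1} separately. The $(p,Q)$-part equals $\frac{t^p}{p}\|u_0\|_{HW^{1,p}}^p+\frac{t^Q}{Q}\|u_0\|_{HW^{1,Q}}^Q$, which grows like $t^Q$. The term $-\frac{\mu}{s}\int_{\mathbb{H}^N}g(\xi)(tu_0)^s\,\mathrm{d}\xi$ is $\le 0$ for every $\mu>0$, because $g>0$ and $u_0\ge 0$; it is simply discarded from the upper bound, and this is precisely what makes the construction independent of $\mu$. The crucial point is to bound the Stein-Weiss double integral from below by a power of $t$ strictly larger than $Q$. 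For this I use the Ambrosetti--Rabinowitz condition $(f2)$: since $F(\xi,\tau)>0$ for $\tau>0$, $(f2)$ gives $\frac{\mathrm d}{\mathrm d\tau}\log F(\xi,\tau)=\frac{f(\xi,\tau)}{F(\xi,\tau)}\ge\frac{\sigma}{2\tau}$, and integrating over $[1,u]$ yields $F(\xi,u)\ge F(\xi,1)\,u^{\sigma/2}$ for all $u\ge 1$. As $F(\xi,1)=\int_0^1f(\xi,\tau)\,\mathrm{d}\tau$ is positive by $(f1)$ and continuous in $\xi$, compactness gives $c_0:=\min_{\overline B}F(\cdot,1)>0$, hence $F(\xi,u)\ge c_0u^{\sigma/2}$ on $\overline B$ for $u\ge 1$.

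Restricting the (nonnegative, since $F\ge 0$) double integral to $B\times B$ and using that the Kor\'anyi weights are controlled there, namely $r(\xi)^{-\beta},\,r(\eta)^{-\beta}\ge R_0^{-\beta}$ and $d_K(\xi,\eta)^{-\lambda}\ge D^{-\lambda}$ a.e.\ on $B\times B$ with $D:=\sup_{B\times B}d_K<\infty$, together with $tu_0\equiv t\ge 1$ on $B$, I obtain for $t\ge 1$
\[
\int_{\mathbb{H}^N}\!\Bigg(\int_{\mathbb{H}^N}\frac{F(\eta,tu_0)}{r(\eta)^\beta d_K(\xi,\eta)^\lambda}\,\mathrm{d}\eta\Bigg)\frac{F(\xi,tu_0)}{r(\xi)^\beta}\,\mathrm{d}\xi
\;\ge\;\frac{1}{R_0^{2\beta}D^\lambda}\Bigg(\int_B F(\xi,tu_0)\,\mathrm{d}\xi\Bigg)^{2}
\;\ge\;\frac{c_0^{2}\,|B|^{2}}{R_0^{2\beta}D^\lambda}\,t^{\sigma}\;=:\;C_0\,t^{\sigma},
\]
with $C_0>0$ independent of $t$, $\mu$, $\gamma$. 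Hence $J(tu_0)\le\frac{t^p}{p}\|u_0\|_{HW^{1,p}}^p+\frac{t^Q}{Q}\|u_0\|_{HW^{1,Q}}^Q-\frac{\gamma}{2}C_0t^{\sigma}$, and since $\sigma>Q>p>1$ the right-hand side tends to $-\infty$ as $t\to+\infty$. Choosing $t_0\ge 1$ large enough that $t_0\|u_0\|_{\mathbf X}>\rho$ and the displayed upper bound is negative, and putting $e:=t_0u_0$, finishes the argument.

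The only mildly delicate step is the upgrade from the pointwise positivity $F(\xi,1)>0$ to the uniform bound $\inf_{\overline B}F(\cdot,1)>0$, which rests on the strict positivity of $f(\cdot,\tau)$ for $\tau>0$ in $(f1)$ and continuity on the compact set $\overline B$; the rest — the one-line logarithmic estimate from $(f2)$, the boundedness of the Kor\'anyi weights over $B\times B$, and collapsing the double integral to the square of a single integral over $B$ — is routine. It is worth noting that only $(f1)$ and $(f2)$ are used, in keeping with the placement of this lemma in Section~\ref{sec3}.
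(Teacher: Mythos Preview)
Your proof is correct, and it takes a somewhat different route from the paper's. Both arguments use $(f2)$ to extract $t^\sigma$-growth from the Choquard term and thereby show $J(tu_0)\to-\infty$; the difference lies in \emph{where} the Ambrosetti--Rabinowitz condition is applied. The paper works at the level of the full nonlocal functional: setting $\mathcal H(t)=\Psi(tu_0/\|u_0\|_{\mathbf X})$ with $\Psi$ the Stein--Weiss double integral, $(f2)$ yields the differential inequality $\mathcal H'(t)\ge\frac{\sigma}{t}\mathcal H(t)$, which integrates to $\Psi(su_0)\ge\Psi(u_0/\|u_0\|_{\mathbf X})\,s^\sigma\|u_0\|_{\mathbf X}^\sigma$ for any nonnegative $u_0\in\mathbf X\setminus\{0\}$. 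You instead apply $(f2)$ pointwise to $F$ to obtain $F(\xi,u)\ge F(\xi,1)\,u^{\sigma/2}$ for $u\ge1$, and then manufacture a concrete lower bound on the double integral by restricting to a ball where $u_0\equiv 1$ and bounding the Kor\'anyi weights. The paper's approach is slicker and imposes no structural requirement on $u_0$; yours is more elementary and gives an explicit constant, at the price of choosing a specific test function and invoking the compactness step $\inf_{\overline B}F(\cdot,1)>0$. Either way the $\mu$-term is simply discarded as nonpositive, so $e$ is independent of $\mu$ in both proofs.
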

\begin{proof}
    Choose $u_0\in \mathbf{X}\setminus\{0\},~u_0\geq 0 $ and define a function $\mathcal{H}(t)=\Psi\bigg(\displaystyle\frac{tu_0}{\|u_0\|_{\mathbf{X}}}\bigg)$ for any $t>0$, where $\Psi$ is defined by
    $$\Psi(u)=\frac{1}{2}\int_{\mathbb{H}^N}\Bigg(\displaystyle\int_{\mathbb{H}^N}\frac{F(\eta,u)}{r(\eta)^\beta {d_K(\xi,\eta)}^\lambda}~\mathrm{d}\eta\Bigg)\frac{F(\xi,u)}{r(\xi)^\beta}~\mathrm{d}\xi. $$
    Using $(f2)$, we have the following estimates for any $t>0$
    \begin{align}\label{eq3.9}
      \mathcal{H}^\prime(t)&= \frac{1}{t}\Bigg\langle{\Psi^\prime\bigg(\displaystyle\frac{tu_0}{\|u_0\|_{\mathbf{X}}}\bigg),\displaystyle\frac{tu_0}{\|u_0\|_{\mathbf{X}}}}\Bigg\rangle=\frac{\sigma}{t} \displaystyle\int_{\mathbb{H}^N}\Bigg(\displaystyle\int_{\mathbb{H}^N}\frac{F\Big(\eta,\frac{tu_0}{\|u_0\|_{\mathbf{X}}}\Big)}{r(\eta)^\beta {d_K(\xi,\eta)}^\lambda}~\mathrm{d}\eta\Bigg)\frac{f\Big(\xi,\frac{tu_0}{\|u_0\|_{\mathbf{X}}}\Big)}{r(\xi)^\beta} \frac{tu_0}{\sigma\|u_0\|_{\mathbf{X}}}~\mathrm{d}\xi \notag\\
      &\geq \frac{\sigma}{2t} \displaystyle\int_{\mathbb{H}^N}\Bigg(\displaystyle\int_{\mathbb{H}^N}\frac{F\Big(\eta,\frac{tu_0}{\|u_0\|_{\mathbf{X}}}\Big)}{r(\eta)^\beta {d_K(\xi,\eta)}^\lambda}~\mathrm{d}\eta\Bigg)\frac{F\Big(\xi,\frac{tu_0}{\|u_0\|_{\mathbf{X}}}\Big)}{r(\xi)^\beta} ~\mathrm{d}\xi=\frac{\sigma}{t}\mathcal{H}(t).
    \end{align}
Integrating \eqref{eq3.9} on $[1,s\|u_0\|_{\mathbf{X}}]$ with  $s>\frac{1}{\|u_0\|_{\mathbf{X}}}$, we obtain $\mathcal{H}(s\|u_0\|_{\mathbf{X}})\geq \mathcal{H}(1)s^\sigma \|u_0\|^\sigma_{\mathbf{X}}$, which implies at once that 
\begin{align}\label{eq3.10}
    \Psi(su_0)\geq \Psi\bigg(\displaystyle\frac{u_0}{\|u_0\|_{\mathbf{X}}}\bigg) s^\sigma\|u_0\|^\sigma_{\mathbf{X}}.
\end{align}
Hence, it follows from \eqref{eq3.1} and \eqref{eq3.10} that
$$J(su_0)\leq \frac{s^p}{p}\|u_0\|^p_{HW^{1,p}}+\frac{s^Q}{Q}\|u_0\|^Q_{HW^{1,Q}}-\frac{\gamma s^\sigma \|u_0\|^\sigma_{\mathbf{X}}}{2}~ \Psi\bigg(\displaystyle\frac{u_0}{\|u_0\|_{\mathbf{X}}}\bigg)\to -\infty~\text{as}~s\to\infty,$$
since $p<Q<\sigma$. Thus, we can choose $s>\frac{1}{\|u_0\|_{\mathbf{X}}}$ large enough such that $e=su_0$ with $J(e)<0$ and $\|e\|_{\mathbf{X}}>\rho$. This completes the proof.   
\end{proof}
\begin{lemma}\label{lem3.4}
    Let $\rho\in(0,1]$ as in Lemma \ref{lem3.3}, then for all $\mu\in(0,\mu^\ast]$ as in Lemma \ref{lem3.3} and for all $\gamma>0$, there exist a sequence $\{u_n\}_n$ of nonnegative functions and some nonnegative function $u_{\mu,\gamma}$ in $\overline{B}_\rho$ such that for all $n\in \mathbb{N}$,
    \begin{equation}\label{eq3.11}
    \begin{split}
     \|u_n\|_{\mathbf{X}}<\rho,~m_{\mu,\gamma}\leq J(u_n)\leq m_{\mu,\gamma}+\frac{1}{n},~u_n\rightharpoonup u_{\mu,\gamma}~\text{in}~\mathbf{X},\\
     u_n\to u_{\mu,\gamma}~\text{a.e. in}~\mathbb{H}^N~\text{and}~J^\prime(u_n)\to 0~\text{in}~\mathbf{X}^\ast~\text{as}~n\to\infty,\\
    \end{split}
\end{equation}
where $$ m_{\mu,\gamma}=\inf\{J(u):u\in\overline{B}_\rho\}<0.$$    
\end{lemma}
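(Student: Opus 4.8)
The plan is to produce the sequence $\{u_n\}$ by applying the weak form of the Ekeland variational principle to $J$ restricted to the complete metric space $\overline{B}_\rho$, and then to replace the resulting (unsigned, only almost critical) minimizing sequence by its positive part. First one records that $m_{\mu,\gamma}$ is finite and strictly negative: the estimates from the proof of Lemma~\ref{lem3.2} give $J(u)\ge-\tfrac{\mu}{s}\|g\|_\vartheta\rho^{s}-C\rho^{2Q}$ on $\overline{B}_\rho$, so $J$ is bounded below there, while for any fixed $u_0\in\mathbf{X}\setminus\{0\}$ with $u_0\ge 0$ the term $-\tfrac{\mu}{s}t^{s}\int_{\mathbb{H}^N}g\,u_0^{s}\,\mathrm{d}\xi$ dominates $J(tu_0)$ as $t\to 0^+$ because $1<s<p<Q$, so $J(tu_0)<0$ and $\|tu_0\|_{\mathbf{X}}<\rho$ for $t$ small, whence $m_{\mu,\gamma}<0$. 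Since $J\in\mathcal{C}^1(\mathbf{X},\mathbb R)$ is continuous, Ekeland's principle yields $\{w_n\}\subset\overline{B}_\rho$ with $m_{\mu,\gamma}\le J(w_n)\le m_{\mu,\gamma}+\tfrac1n$ and $J(w)\ge J(w_n)-\tfrac1n\|w-w_n\|_{\mathbf{X}}$ for all $w\in\overline{B}_\rho$.

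Next I would show that $\{w_n\}$ lies in the open ball and is a Palais--Smale-type sequence. By Lemma~\ref{lem3.2}, $J\ge\jmath>0$ on $\partial B_\rho$ whereas $J(w_n)\to m_{\mu,\gamma}<0$; hence $\|w_n\|_{\mathbf{X}}<\rho$ for all large $n$, and after reindexing for every $n$. Then for $v\in\mathbf{X}$ with $\|v\|_{\mathbf{X}}=1$ the choice $w=w_n+tv$ is admissible for small $t>0$, and dividing by $t$ and letting $t\downarrow 0$ gives $\langle J'(w_n),v\rangle\ge-\tfrac1n$; applying this to $-v$ as well yields $\|J'(w_n)\|_{\mathbf{X}^\ast}\le\tfrac1n$, so $J'(w_n)\to 0$ in $\mathbf{X}^\ast$.

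It remains to pass to nonnegative functions. Testing $J'(w_n)$ with $-w_n^-$ and using that $f(\xi,\cdot)\equiv 0$ on $(-\infty,0]$ and that the weighted $L^s$-term involves only $w_n^+$, every nonlinear contribution vanishes and one is left with $\langle J'(w_n),-w_n^-\rangle=\|w_n^-\|^p_{HW^{1,p}}+\|w_n^-\|^Q_{HW^{1,Q}}$; since the left side is bounded by $\tfrac1n\|w_n^-\|_{\mathbf{X}}$, this forces $w_n^-\to 0$ in $\mathbf{X}$. Put $u_n:=w_n^+$; because $w_n^+$ and $w_n^-$ have disjoint supports, $\|u_n\|_{\mathbf{X}}\le\|w_n\|_{\mathbf{X}}<\rho$, $J(u_n)\le J(w_n)$ (hence $m_{\mu,\gamma}\le J(u_n)\le m_{\mu,\gamma}+\tfrac1n$), and $J(u_n)=J(w_n)+o_n(1)$; moreover $f(\xi,w_n)=f(\xi,w_n^+)$ and $F(\xi,w_n)=F(\xi,w_n^+)$ pointwise, so $J'(u_n)-J'(w_n)$ involves only the $\langle\cdot,\cdot\rangle_{H,p}$ and $\langle\cdot,\cdot\rangle_{H,Q}$ terms, which are $O(\|w_n^-\|_{\mathbf{X}}^{p-1}+\|w_n^-\|_{\mathbf{X}}^{Q-1})$ in $\mathbf{X}^\ast$ by H\"older's inequality, whence $J'(u_n)\to 0$ in $\mathbf{X}^\ast$. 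Finally $\{u_n\}$ is bounded, so by reflexivity $u_n\rightharpoonup u_{\mu,\gamma}$ along a subsequence with $u_{\mu,\gamma}\in\overline{B}_\rho$ by weak lower semicontinuity of $\|\cdot\|_{\mathbf{X}}$; the compact embeddings $\mathbf{X}\hookrightarrow L^{2}(B_R)$ from Corollary~\ref{cor2.3} together with a diagonal extraction over $R\in\mathbb N$ give $u_n\to u_{\mu,\gamma}$ a.e. in $\mathbb{H}^N$, and $u_{\mu,\gamma}\ge 0$ a.e. as an a.e. limit of nonnegative functions. The delicate point is not any isolated inequality but the transition from the Ekeland sequence $\{w_n\}$ to the nonnegative, still almost critical, sequence $\{u_n\}$ without destroying the two-sided bound $m_{\mu,\gamma}\le J(u_n)\le m_{\mu,\gamma}+\tfrac1n$; the identity $\langle J'(w_n),-w_n^-\rangle=\|w_n^-\|^p_{HW^{1,p}}+\|w_n^-\|^Q_{HW^{1,Q}}$ is exactly what makes this possible.
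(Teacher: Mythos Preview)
Your proof is correct and follows essentially the same route as the paper: both establish $-\infty<m_{\mu,\gamma}<0$, apply Ekeland's variational principle on the complete metric space $\overline{B}_\rho$, use Lemma~\ref{lem3.2} to force the sequence into the open ball $B_\rho$ and deduce $J'\to 0$ in $\mathbf{X}^\ast$, test the derivative against the negative part to obtain $w_n^-\to 0$ in $\mathbf{X}$, and then pass to a weakly convergent subsequence with a.e.\ limit. The only difference is cosmetic: the paper keeps the original Ekeland sequence and declares ``without loss of generality $u_n=u_n^+$'' after proving $u_n^-\to 0$, whereas you explicitly set $u_n:=w_n^+$ and verify that the two-sided energy bound and the Palais--Smale property survive the replacement---your version is in fact the more careful justification of that step.
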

\begin{proof}
Let $v\in\mathbf{X}\setminus\{0\}$, $v\geq 0$, with $\|v\|_{\mathbf{X}}=1$. Now using $(f3)$, we infer that $F(\xi,t)\geq Ct^{\frac{\sigma}{2}}$ for some constant $C>0$, for a.e. $\xi\in \mathbb{H}^N$ and for all $t>0$. Hence, for all $\tau\in (0,1]$ sufficiently small enough, we have 
\begin{align*}J(\tau v)&\leq \frac{2\tau^{p}}{p}-\frac{\mu\tau^s}{s}\|v\|^s_{s,g}-\frac{C^2\gamma\tau^\sigma}{2}\displaystyle\int_{\mathbb{H}^N}\Bigg(\displaystyle\int_{\mathbb{H}^N}\frac{|v(\eta)|^{\frac{\sigma}{2}}}{r(\eta)^\beta {d_K(\xi,\eta)}^\lambda}~\mathrm{d}\eta\Bigg)\frac{|v(\xi)|^{\frac{\sigma}{2}}}{r(\xi)^\beta}~\mathrm{d}\xi\\
&\leq \frac{2\tau^{p}}{p}-\frac{\mu\tau^s}{s}\|v\|^s_{s,g} <0,\end{align*}
because of $s<p$. From the above estimates and \eqref{eq3.8}, we obtain
$$-\infty<m_{\mu,\gamma}=\inf _{\|u\|\leq \rho}J(u)\leq \inf _{\tau\in(0,\rho]}J(\tau v)<0. $$
Consequently, we deduce that
\begin{equation}\label{eq3.12}
  -\infty< m_{\mu,\gamma}=\inf\{J_{\mu,\gamma}(u):u\in\overline{B}_\rho\}<0. 
\end{equation}
It follows that the functional $J$ is bounded from below and of class $\mathcal{C}^1$ on $\overline{B}_\rho$. Further, we know that $\overline{B}_\rho$ is a complete metric space with the metric given by the norm of $\mathbf{X}$. In light of Lemma \ref{lem3.2}, we infer that
\begin{equation}\label{eq3.13}
 \inf _{\partial B_\rho}J(u)\geq \jmath>0 . 
\end{equation}
In view of \eqref{eq3.12} and \eqref{eq3.13}, for $n$ large enough, we can choose
\begin{equation}\label{eq3.14}
    \frac{1}{n}\in \big(0, \inf _{\partial B_\rho}J(u)-\inf _{\overline{B}_\rho }J(u)\big).
\end{equation}
Due to Ekeland variational principle for the functional $J:\overline{B}_\rho\to\mathbb{R}$, we can find a sequence $\{u_n\}_n\subset \overline{B}_\rho $ such that
\begin{equation}\label{eq3.15}
  m_{\mu,\gamma}\leq J(u_n)\leq m_{\mu,\gamma}+\frac{1}{n}~\text{and} ~J(u_n)\leq J(u)+\frac{1}{n}\|u_n-u\|_{\mathbf{X}}, 
\end{equation}
for all $u\in\overline{B}_\rho$, with $u\neq u_n$ for each $n\in\mathbb{N}$. Now, it follows from \eqref{eq3.14} and \eqref{eq3.15} that
$$ J(u_n)\leq m_{\mu,\gamma}+\frac{1}{n}=\inf _{\overline{B}_\rho }J(u)+\frac{1}{n}<\inf _{\partial B_\rho}J(u).$$
From the above inequality, we deduce that $\{u_n\}_n\subset B_\rho $, that is, $\|u_n\|_{\mathbf{X}}<\rho$ for all $n\in\mathbb{N}$. Let for all $\varphi\in B_1$, we choose $t>0$ sufficiently small such that $u_n+t\varphi\in\overline{B}_\rho $ holds, for each $n\in\mathbb{N}$. Consequently, from \eqref{eq3.15}, we obtain
$$\langle{J^\prime(u_n),\varphi}\rangle=\lim_{t\to 0^+}\frac{J(u_n+t\varphi)-J(u_n)}{t}\geq -\frac{1}{n},~\forall~\varphi\in B_1. $$
Since $\varphi\in B_1$ is arbitrary, we conclude that $|\langle{J^\prime(u_n),\varphi}\rangle|\leq \frac{1}{n}$ for all $\varphi\in B_1$. This together with \eqref{eq3.15}, we have that
\begin{equation}\label{eq3.16}
  J(u_n) \to m_{\mu,\gamma}~\text{and}~ J^\prime(u_n)\to 0~\text{in}~\mathbf{X}^\ast~\text{as}~n\to\infty.
\end{equation}
In addition, since $\{u_n\}_n$ is bounded, there exists a subsequence still denoted by the same symbol and $u_{\mu,\gamma},u_1,u_2\in \overline{B}_\rho$  such that $u_n\rightharpoonup u_{\mu,\gamma}$, $u_n^+\rightharpoonup u_1$ and $u_n^-\rightharpoonup u_2$ in $\mathbf{X}$ as $n\to\infty$. By Corollary \ref{cor2.3}, we obtain for any $s\in[1,\infty)$ that $u_n\to u_{\mu,\gamma}$, $u_n^+\to u_1$ and $u_n^-\to u_2$ in $L^s(B_R)$ as $n\to\infty$ for any $R>0$. Using the fact that the maps $u\mapsto u^{\pm}$ are continuous from $L^s(B_R)$ into itself, we infer that $u_1=u_{\mu,\gamma}^+$ and $u_2=u_{\mu,\gamma}^-$. This shows that $u_n\to u_{\mu,\gamma}$, $u_n^+\to u_1^+$ and $u_n^-\to u_2^-$ a.e. in $\mathbb{H}^N$ as $n\to\infty$. Further, since $\langle{J^\prime(u_n),u_n^-}\rangle\to 0$ as $n\to\infty$, therefore we obtained from \eqref{eq3.5} that 
 $$\|u_n^-\|^p_{HW^{1,p}}+\|u_n^-\|^Q_{HW^{1,Q}}=o_n(1)~~\text{as}~n\to\infty. $$
Consequently, we deduce that $u_n^-\to 0$ in $\mathbf{X}$ as $n\to\infty$ and thus $u_n^-\to 0$ a.e. in $\mathbb{H}^N$ as $n\to\infty$. Therefore, we infer that $u_{\mu,\gamma}^-=0$ a.e. in $\mathbb{H}^N$ and $u_{\mu,\gamma}=u_{\mu,\gamma}^+\geq 0$ a.e. in $\mathbb{H}^N$. Hence, without loss of generality, we can assume $u_n=u_n^+$, since $u_n^-\to 0$ in $\mathbf{X}$ as $n\to\infty$. It follows that $\{u_n\}_n$ and $u_{\mu,\gamma}$ are nonnegative. This completes the proof.
\end{proof}
\begin{proposition}\label{prop3.5}
Let $\{u_n\}_n\subset\mathbf{X}$ be a sequence and $u\in \mathbf{X}$ be such that $u_n\rightharpoonup u$  in $\mathbf{X}$ as $n\to\infty$. Furthermore, if there holds
\begin{equation}\label{eq3.17}
   \limsup_{n\to\infty}\|u_n\|^{Q^\prime}_{HW^{1,Q}}<\alpha_Q, 
\end{equation}
then for any $v\in\mathcal{C}^\infty_c(\mathbb{H}^N)$, we have up to a subsequence,
\begin{align*}
    \lim_{n\to\infty} \displaystyle\int_{\mathbb{H}^N}\Bigg(\displaystyle\int_{\mathbb{H}^N}\frac{F(\eta,u_n)}{r(\eta)^\beta {d_K(\xi,\eta)}^\lambda}~\mathrm{d}\eta\Bigg)\frac{f(\xi,u_n)}{r(\xi)^\beta} v~\mathrm{d}\xi=\displaystyle\int_{\mathbb{H}^N}\Bigg(\displaystyle\int_{\mathbb{H}^N}\frac{F(\eta,u)}{r(\eta)^\beta {d_K(\xi,\eta)}^\lambda}~\mathrm{d}\eta\Bigg)\frac{f(\xi,u)}{r(\xi)^\beta} v~\mathrm{d}\xi.
\end{align*}
\end{proposition}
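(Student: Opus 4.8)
The plan is to localise the outer integral using the compact support of $v$, bound the convolution potential in a Lebesgue space by the doubly weighted Hardy--Littlewood--Sobolev inequality, and then pass to the limit by a weak--strong pairing. Fix $v\in\mathcal{C}^\infty_c(\mathbb{H}^N)$ and put $K=\operatorname{supp}v$, a compact set, so that the outer integrals run over $K$ only. Introduce
$$\phi_n(\xi)=\frac{v(\xi)f(\xi,u_n)}{r(\xi)^\beta},\qquad \psi_n(\eta)=\frac{F(\eta,u_n)}{r(\eta)^\beta},\qquad W_n(\xi)=\int_{\mathbb{H}^N}\frac{\psi_n(\eta)}{d_K(\xi,\eta)^\lambda}\,\mathrm{d}\eta,$$
together with the analogous $\phi,\psi,W$ built from $u$; by Fubini's theorem the assertion becomes $\int_K W_n\phi_n\,\mathrm{d}\xi\to\int_K W\phi\,\mathrm{d}\xi$. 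Since $\{u_n\}$ is bounded in $\mathbf{X}$, Corollary \ref{cor2.3} gives, along a subsequence, $u_n\to u$ in $L^\theta(B_R)$ for every $\theta\in[1,\infty)$ and $R>0$, as well as $u_n\to u$ a.e.\ in $\mathbb{H}^N$; moreover $\{u_n\}$ stays bounded in $HW^{1,Q}(\mathbb{H}^N)$ and \eqref{eq3.17} holds. The three ingredients I aim for are: $\{W_n\}$ bounded in $L^{t}(\mathbb{H}^N)$ for a suitable $t$; $\phi_n\to\phi$ strongly in $L^{t^\prime}(K)$ ($t^\prime$ the conjugate exponent); and $W_n\to W$ a.e.

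For the first, I would show $\{\psi_n\}$ is bounded in $L^{\mathbf{s}}(\mathbb{H}^N)$ for some $\mathbf{s}\in(1,\frac{Q}{Q-\lambda})$, whence Theorem \ref{thm2.9} (with the plain Riesz kernel $d_K^{-\lambda}$) gives $\{W_n\}$ bounded in $L^{t}(\mathbb{H}^N)$, where $1+\frac1t=\frac1{\mathbf{s}}+\frac{\lambda}{Q}$. To bound $\|\psi_n\|_{\mathbf{s}}$, apply the growth estimate \eqref{eq3.2} for $F$ (with a free exponent $\nu\ge Q$), use H\"older's inequality both to absorb the weight $r^{-\beta}$ (licit since $\beta<Q$) and to peel off the factor $\Phi(\alpha_0|u_n|^{Q^\prime})$, invoke Corollary \ref{cor2.5} to replace $\Phi(\cdot)^m$ by $\Phi(m\cdot)$, and write $|u_n|^{Q^\prime}=\|u_n\|_{HW^{1,Q}}^{Q^\prime}\,|\widetilde u_n|^{Q^\prime}$ with $\|\widetilde u_n\|_{HW^{1,Q}}\le1$. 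The exponential contribution is then reduced to $\int_{\mathbb{H}^N}\Phi\big(c\,\alpha_0\|u_n\|_{HW^{1,Q}}^{Q^\prime}\,|\widetilde u_n|^{Q^\prime}\big)\,\mathrm{d}\xi$ with $c$ a product of H\"older exponents that can be taken arbitrarily close to $1$; by \eqref{eq3.17} one arranges $c\,\alpha_0\,\limsup_n\|u_n\|_{HW^{1,Q}}^{Q^\prime}\le\alpha_Q$ and gets the uniform bound from Theorem \ref{thm2.7} (with $\beta=0$, $\tau=1$), the polynomial part being controlled by $\mathbf{X}\hookrightarrow L^\theta(\mathbb{H}^N)$, $\theta\in[Q,\infty)$.

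The same machinery applied to $f$ in place of $F$ shows $\{|\phi_n|^{t^\prime}\}$ is uniformly integrable on $K$; combined with $\phi_n\to\phi$ a.e.\ (from $u_n\to u$ a.e.\ and the continuity of $f$), Vitali's convergence theorem yields $\phi_n\to\phi$ strongly in $L^{t^\prime}(K)$. For the a.e.\ convergence $W_n\to W$ I would fix $\xi$ and split $\int_{\mathbb{H}^N}\mathrm{d}\eta=\int_{B_\delta(\xi)}+\int_{B_R\setminus B_\delta(\xi)}+\int_{\mathbb{H}^N\setminus B_R}$: the first and third pieces are uniformly small (the inner one because $d_K(\xi,\cdot)^{-\lambda}\in L^{\mathbf{s}^\prime}(B_\delta(\xi))$ since $\lambda<Q$, together with the $L^{\mathbf{s}}$ bound on $\{\psi_n\}$; the outer one from the $L^{\mathbf{s}}$ bound and the decay of the kernel), while on the bounded annulus $\psi_n\to\psi$ in $L^1$ (by Vitali again) against the bounded kernel. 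Then $\{W_n\}$ bounded in $L^{t}(\mathbb{H}^N)$ with $t>1$ and $W_n\to W$ a.e.\ force $W_n\rightharpoonup W$ in $L^{t}(\mathbb{H}^N)$; extending $\phi_n$ by zero and using the weak--strong pairing against $\phi_n\to\phi$ in $L^{t^\prime}$ gives $\int_K W_n\phi_n\to\int_K W\phi$, which is the claim.

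The main obstacle is the uniform control of the exponential integrals in the two boundedness/compactness steps: one must choose the auxiliary exponent $\mathbf{s}$ (equivalently $t^\prime$), the free exponent $\nu$, and the H\"older exponents compatibly with $\lambda$, $\beta$ and $\alpha_0$ so that, after Corollary \ref{cor2.5}, every integral of the form $\int\Phi(c\,\alpha_0|u_n|^{Q^\prime})$ remains within the range where Theorem \ref{thm2.7} applies. This is exactly where the \emph{strict} inequality $\limsup_n\|u_n\|_{HW^{1,Q}}^{Q^\prime}<\alpha_Q$ in \eqref{eq3.17} is indispensable: it leaves precisely the slack needed to absorb the constant $c$ and (what remains of) $\alpha_0$ and still stay below $\alpha_Q$. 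The a.e.\ convergence $W_n\to W$, with its three-region splitting and the singular/decaying kernel, is the other point that requires care.
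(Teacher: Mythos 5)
Your overall skeleton---weak convergence of the Choquard potential paired against strong convergence of $f(\cdot,u_n)v$ in a suitable Lebesgue norm on $\text{supp}(v)$ obtained from Vitali's theorem plus Theorem~\ref{thm2.7}---is the same as the paper's, but the way you split off the weights creates two genuine gaps. First, you cannot ``absorb'' $r^{-\beta}$ by one global H\"older estimate: $r^{-\beta}$ lies in no Lebesgue space over all of $\mathbb{H}^N$ (membership requires $b\beta<Q$ near the origin but $b\beta>Q$ at infinity), so the asserted uniform bound of $\psi_n=F(\cdot,u_n)\,r^{-\beta}$ in $L^{\mathbf{s}}(\mathbb{H}^N)$ does not follow as written; one needs a near/far splitting of $\mathbb{H}^N$, or better, to keep the weights inside the kernel. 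Second, your argument for $W_n\to W$ a.e.\ is internally inconsistent: for $\mathbf{s}\in\big(1,\frac{Q}{Q-\lambda}\big)$ the conjugate exponent satisfies $\mathbf{s}^\prime>\frac{Q}{\lambda}$, i.e.\ $\lambda\mathbf{s}^\prime>Q$, so $d_K(\xi,\cdot)^{-\lambda}\notin L^{\mathbf{s}^\prime}(B_\delta(\xi))$---the kernel is $\mathbf{s}^\prime$-integrable away from $\xi$ but \emph{not} near the singularity, the opposite of what you claim for the inner piece. Repairing this forces a second, strictly larger local exponent for $\psi_n$, which eats further into the Trudinger--Moser budget that you yourself identify as the delicate point.

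Both difficulties vanish along the route the paper takes, which you should adopt: keep both weights inside the doubly weighted Hardy--Littlewood--Sobolev operator, i.e.\ apply Theorem~\ref{thm2.9} with $\alpha=\beta$, so that $h\mapsto\int_{\mathbb{H}^N}\frac{h(\eta)}{r(\xi)^{\beta}d_K(\xi,\eta)^{\lambda}r(\eta)^{\beta}}\,\mathrm{d}\eta$ is a bounded linear map from $L^{\frac{2Q}{2Q-2\beta-\lambda}}(\mathbb{H}^N)$ into $L^{\frac{2Q}{2\beta+\lambda}}(\mathbb{H}^N)$. The growth estimates \eqref{eq3.2} together with \eqref{eq3.17} and Theorem~\ref{thm2.7} give that $\{F(\cdot,u_n)\}_n$ is bounded in $L^{\frac{2Q}{2Q-2\beta-\lambda}}(\mathbb{H}^N)$, hence $F(\cdot,u_n)\rightharpoonup F(\cdot,u)$ there, and boundedness of the linear operator transfers this weak convergence directly to the potentials---no pointwise convergence of $W_n$, no three-region splitting, and no separate treatment of $r^{-\beta}$ are needed. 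Your $\phi_n$-step then corresponds exactly to the paper's proof that $\|(f(\cdot,u_n)-f(\cdot,u))v\|_{\frac{2Q}{2Q-2\beta-\lambda},\,\text{supp}(v)}\to0$ via uniform integrability and Vitali, and the weak--strong pairing closes the argument as you intended.
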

\begin{proof}
By the hypothesis and Corollary \ref{cor2.3}, we obtain that  $\{u_n\}$ is bounded sequence in $\mathbf{X}$ and $u_n\to u~\text{a.e. in }~\mathbb{H}^N$ as $n\to\infty$. Consequently, by using the growth conditions on $F$ (see \eqref{eq3.2}) together with \eqref{eq3.17} and Theorem \ref{thm2.7} imply that  $\{F(\cdot,u_n)\}_n$ is bounded in $L^{\frac{2Q}{2Q-2\beta-\lambda}}(\mathbb{H}^N)$.  
In addition, due to the continuity of $F$, we deduce that $F(\xi,u_n)\to F(\xi,u) $  a.e. in $\mathbb{H}^N~\text{as}~n\to\infty$. It follows at once that
$$F(\xi,u_n)\rightharpoonup F(\xi,u)~\text{in}~L^{\frac{2Q}{2Q-2\beta-\lambda}}(\mathbb{H}^N)~\text{as}~n\to\infty.$$
By Theorem \ref{thm2.9}, we known that the application
$$L^{\frac{2Q}{2Q-2\beta-\lambda}}(\mathbb{H}^N)\ni h(\xi)\mapsto \displaystyle \int_{\mathbb{H}^N} \frac{h(\eta)}{r(\xi)^{\beta }~{d_K(\xi,\eta)}^\lambda r(\eta)^{\beta}}~\mathrm{d}\eta\in L^{\frac{2Q}{2\beta+\lambda}}(\mathbb{H}^N) $$
is a linear and bounded operator from $L^{\frac{2Q}{2Q-2\beta-\lambda}}(\mathbb{H}^N)$ into $L^{\frac{2Q}{2\beta+\lambda}}(\mathbb{H}^N)$. Hence, we deduce that
\begin{align}\label{eq3.19}
  \displaystyle \int_{\mathbb{H}^N} \frac{F(\eta,u_n)}{r(\xi)^{\beta }~{d_K(\xi,\eta)}^\lambda r(\eta)^{\beta}}~\mathrm{d}\eta\rightharpoonup \displaystyle \int_{\mathbb{H}^N} \frac{F(\eta,u)}{r(\xi)^{\beta }~{d_K(\xi,\eta)}^\lambda r(\eta)^{\beta}}~\mathrm{d}\eta~\text{in}~L^{\frac{2Q}{2\beta+\lambda}}(\mathbb{H}^N)~\text{as}~n\to\infty.
\end{align}
In light of \eqref{eq3.19}, there exists a constant $\widetilde{C}>0$ such that
\begin{equation}\label{eq3.20}
    \Bigg\| \displaystyle \int_{\mathbb{H}^N} \frac{F(\eta,u_n)}{r(\xi)^{\beta }~{d_K(\xi,\eta)}^\lambda r(\eta)^{\beta}}~\mathrm{d}\eta\Bigg\|_{\frac{2Q}{2\beta+\lambda}}\leq \widetilde{C}.
\end{equation}
Also, thanks to \eqref{eq3.19}, for every $\varphi\in L^{\frac{2Q}{2Q-2\beta-\lambda}}(\mathbb{H}^N)$, we get
\begin{align*}
    \displaystyle\int_{\mathbb{H}^N}\Bigg(\displaystyle\int_{\mathbb{H}^N}\frac{F(\eta,u_n)}{r(\eta)^\beta {d_K(\xi,\eta)}^\lambda}~\mathrm{d}\eta\Bigg)\frac{\varphi(\xi)}{r(\xi)^\beta} ~\mathrm{d}\xi\to\displaystyle\int_{\mathbb{H}^N}\Bigg(\displaystyle\int_{\mathbb{H}^N}\frac{F(\eta,u)}{r(\eta)^\beta {d_K(\xi,\eta)}^\lambda}~\mathrm{d}\eta\Bigg)\frac{\varphi(\xi)}{r(\xi)^\beta} ~\mathrm{d}\xi~\text{as}~n\to\infty.   
\end{align*}
In particular, we have for all $v\in\mathcal{C}^\infty_c(\mathbb{H}^N)$, there holds
\begin{align}\label{eq3.21}
\displaystyle\int_{\mathbb{H}^N}\Bigg(\displaystyle\int_{\mathbb{H}^N}\frac{F(\eta,u_n)}{r(\eta)^\beta {d_K(\xi,\eta)}^\lambda}~\mathrm{d}\eta\Bigg)\frac{f(\xi,u)}{r(\xi)^\beta} v~\mathrm{d}\xi\to\displaystyle\int_{\mathbb{H}^N}\Bigg(\displaystyle\int_{\mathbb{H}^N}\frac{F(\eta,u)}{r(\eta)^\beta {d_K(\xi,\eta)}^\lambda}~\mathrm{d}\eta\Bigg)\frac{f(\xi,u)}{r(\xi)^\beta} v~\mathrm{d}\xi~\text{as}~n\to\infty.      
\end{align}
Now, we claim that for every $v\in\mathcal{C}^\infty_c(\mathbb{H}^N)$, there holds 
\begin{align}\label{eq3.22}
\displaystyle\int_{\mathbb{H}^N}\Bigg(\displaystyle\int_{\mathbb{H}^N}\frac{F(\eta,u_n)}{r(\eta)^\beta {d_K(\xi,\eta)}^\lambda}~\mathrm{d}\eta\Bigg)\frac{(f(\xi,u_n)-f(\xi,u))}{r(\xi)^\beta} v~\mathrm{d}\xi\to 0~\text{as}~n\to\infty.      
\end{align}
Observe that, to check the validity of \eqref{eq3.22}, it is enough to show:
\begin{align}\label{eq3.23}
    \|(f(\cdot,u_n)-f(\cdot,u)) v\|_{{\frac{2Q}{2Q-2\beta-\lambda},~ \text{supp}(v)}}\to 0~\text{as}~n\to\infty,
\end{align}
since we can use the H\"older's inequality and \eqref{eq3.20}.
Notice that $f(\cdot,u_n)v\to f(\cdot,u)v$  a.e. in $\mathbb{H}^N~\text{as}~n\to\infty$. Further, it follows from \eqref{eq3.17} that there exists $n_0\in\mathbb{N}$ and $m>0$ such that $\|u_n\|^{Q^\prime}_{HW^{1,Q}}<m<\alpha_Q$ for all $n\geq n_0$. Let $r$, $r^\prime>1$ satisfying the relation $\frac{1}{r}+\frac{1}{r^\prime}=1$. Now, choose $\alpha>\alpha_0$ very close to $\alpha_0$ and $r^\prime>1$ very close to 1 in such a way that we still have $\displaystyle\frac{2\alpha Q r^\prime }{2Q-2\beta-\lambda} \|u_n\|^{Q^\prime}_{HW^{1, Q}}<m<\alpha_Q$ for all $n\geq n_0$.  Moreover, we obtain the following estimates for all $n\geq n_0$ by employing $(f1)$, Corollary \ref{cor2.3}, Corollary \ref{cor2.5}, and the H\"older's inequality:

\begin{align}\label{eq3.24}
  \int_{\text{supp}(v)}|f(\xi,u_n)v|^{\frac{2Q}{2Q-2\beta-\lambda}}~\mathrm{d}\xi&\leq  C \int_{\text{supp}(v)}|v|^{\frac{2Q}{2Q-2\beta-\lambda}} \Bigg( |u_n|^{\frac{2Q(Q-1)}{2Q-2\beta-\lambda}}+\Phi\bigg(\frac{2Q\alpha}{2Q-2\beta-\lambda} |u_n|^{Q^\prime}\bigg)\Bigg)~\mathrm{d}\xi \notag\\
    & \leq C \Bigg[\|u_n\|^{\frac{2Q(Q-1)}{2Q-2\beta-\lambda}}_{\frac{2Q^2}{2Q-2\beta-\lambda}}\bigg(\int_{\text{supp}(v)}|v|^{\frac{2Q^2}{2Q-2\beta-\lambda}}~\mathrm{d}\xi\bigg)^{\frac{1}{Q}}+\bigg(\int_{\mathbb{H}^N}\Phi\bigg(\frac{2\alpha Q r^\prime}{2Q-2\beta-\lambda} |u_n|^{Q^\prime}\bigg)~\mathrm{d}\xi\bigg)^{\frac{1}{r^\prime}} \notag\\
    & \qquad~~~\times\bigg(\int_{\text{supp}(v)}|v|^{\frac{2Qr}{2Q-2\beta-\lambda}}~\mathrm{d}\xi\bigg)^{\frac{1}{r}}\Bigg] \notag\\
    & \leq C_1\Bigg[\|u_n\|^{\frac{2Q(Q-1)}{2Q-2\beta-\lambda}}_{\mathbf{X}}\bigg(\int_{\text{supp}(v)}|v|^{\frac{2Q^2}{2Q-2\beta-\lambda}}\mathrm{d}\xi\bigg)^{\frac{1}{Q}} +\bigg(\int_{\mathbb{H}^N}\Phi\bigg(\frac{2\alpha Q r^\prime \|u_n\|^{Q^\prime}_{HW^{1, Q}}}{2Q-2\beta-\lambda}|\widetilde{u}_n|^{Q^\prime} \bigg)\mathrm{d}\xi\bigg)^{\frac{1}{r^\prime}}\notag\\
       & \qquad~~~ \times\bigg(\int_{\text{supp}(v)}|v|^{\frac{2Qr}{2Q-2\beta-\lambda}}~\mathrm{d}\xi\bigg)^{\frac{1}{r}}\Bigg] \notag\\
    &\leq C_2 \Bigg[\bigg(\int_{\text{supp}(v)}|v|^{\frac{2Q^2}{2Q-2\beta-\lambda}}~\mathrm{d}\xi\bigg)^{\frac{1}{Q}}  +\bigg(\int_{\text{supp}(v)}|v|^{\frac{2Qr}{2Q-2\beta-\lambda}}~\mathrm{d}\xi\bigg)^{\frac{1}{r}}\Bigg],  
    \end{align}
where $C, C_1$ are positive constants, $\widetilde{u}_n=u_n/{\|u_n\|_{HW^{1,Q}}}$ and $C_2$ is defined as follows
\begin{align*}
    C_2&=C_1\max\Bigg\{ \sup_{n\in\mathbb{N}}\|u_n\|^{\frac{2Q(Q-1)}{2Q-2\beta-\lambda}}_{\mathbf{X}},~\bigg(\sup_{n\geq n_0}\int_{\mathbb{H}^N}\Phi\bigg(\frac{2\alpha Q r^\prime \|u_n\|^{Q^\prime}_{HW^{1, Q}}}{2Q-2\beta-\lambda}|\widetilde{u}_n|^{Q^\prime} \bigg)~\mathrm{d}\xi\bigg)^{\frac{1}{r^\prime}}~\Bigg\},
\end{align*}
which is finite because of the uniform boundedness of the sequence $\{u_n\}_n$ in $\mathbf{X}$ and Theorem \ref{thm2.7}. Observe that the right hand side of $\eqref{eq3.24}$ is finite, therefore we conclude that $\{|f(\cdot,u_n)v|^{\frac{2Q}{2Q-2\beta-\lambda}}\}_{n\geq n_0}$ is bounded in $L^1(\text{supp}(v))$. Consequently, we can also see that the sequence $\{|f(\cdot,u_n)v|^{\frac{2Q}{2Q-2\beta-\lambda}}\}_{n\geq n_0}$ is uniformly absolutely integrable and tight over $\text{supp}(v)$. Thus, by applying the Vitali's convergence theorem, we obtain
$$\|f(\cdot,u_n)v\|_{\frac{2Q}{2Q-2\beta-\lambda},~ \text{supp}(v)}\to\|f(\cdot,u)) v\|_{{\frac{2Q}{2Q-2\beta-\lambda},~ \text{supp}(v)}}~\text{as}~n\to\infty. $$
This together with Br\'ezis-Lieb  lemma implies the validity of \eqref{eq3.23} and hence \eqref{eq3.22} holds. Therefore, combining \eqref{eq3.21} and \eqref{eq3.22}, the proof of Proposition \ref{prop3.5} is completely finished.
\end{proof}
\begin{proposition}\label{prop3.6}
Let $\{u_n\}_n\subset\mathbf{X} $ be a sequence and there exists $u\in \mathbf{X}$ such that $u_n\rightharpoonup u$  in $\mathbf{X}$ as $n\to\infty$. In addition, if $J^\prime(u_n)\to 0$ in $\mathbf{X}^\ast$ as $n\to\infty$ and \eqref{eq3.17} hold true, then $D_H u_n\to D_H u$ a.e. in $\mathbb{H}^N$ as $n\to\infty$. Consequently, for $t\in\{p,Q\}$, there holds
\begin{align}\label{eq3.333}
  |D_H u_n|_H^{t-2}D_H u_n &\rightharpoonup |D_H u|_H^{t-2}D_H u~\text{in}~L^{\frac{t}{t-1}}(\mathbb{H}^N,\mathbb{R}^{2N})~\text{and}~  |u_n|^{t-2} u_n\rightharpoonup |u|^{t-2} u~\text{in}~L^{\frac{t}{t-1}}(\mathbb{H}^N)~\text{as}~n\to\infty.
\end{align}

\end{proposition}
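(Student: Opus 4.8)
The plan is to prove Proposition \ref{prop3.6} in two stages: first establish the almost everywhere convergence of the horizontal gradients $D_H u_n \to D_H u$, and then deduce the two weak convergences \eqref{eq3.333} as an essentially routine consequence. For the first stage, the natural strategy is the classical Boccardo--Murat / De Giorgi argument adapted to the Heisenberg setting. I would test $J^\prime(u_n)$ against $\varphi_n := u_n - u$ (or, more carefully, against $(u_n-u)\psi_R$ with a cutoff $\psi_R$ supported on $B_R$ if one wants to avoid integrability issues at infinity; since $u_n \rightharpoonup u$ in $\mathbf{X}$, $\varphi_n \rightharpoonup 0$, so $\langle J^\prime(u_n), \varphi_n\rangle \to 0$). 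This gives
\begin{align*}
\langle u_n, u_n - u\rangle_{H,p} + \langle u_n, u_n - u\rangle_{H,Q} = \mu\!\int_{\mathbb{H}^N}\! g(u_n^+)^{s-1}(u_n-u)\,\mathrm{d}\xi + \gamma\!\int_{\mathbb{H}^N}\!\Bigl(\,\text{Choq.}\,\Bigr)\frac{f(\xi,u_n)}{r(\xi)^\beta}(u_n-u)\,\mathrm{d}\xi + o_n(1).
\end{align*}
The first right-hand term vanishes because $\mathbf{X} \hookrightarrow L^s_g(\mathbb{H}^N)$ is \emph{compact} by Lemma \ref{lem2.4}, so $u_n \to u$ strongly there and $(u_n^+)^{s-1}$ is bounded in the dual. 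The second (Choquard) term requires Proposition \ref{prop3.5}: by \eqref{eq3.17}, $\limsup_n \|u_n\|_{HW^{1,Q}}^{Q^\prime} < \alpha_Q$, so the nonlocal factor is bounded in $L^{2Q/(2\beta+\lambda)}$ and $f(\xi,u_n)(u_n-u)/r(\xi)^\beta \to 0$ in the conjugate space by a Vitali-type argument just like \eqref{eq3.23}--\eqref{eq3.24} but with $v$ replaced by $(u_n - u)$, which is bounded in every $L^\theta(B_R)$ and converges to $0$; here one exploits that $2Qr^\prime\alpha/(2Q-2\beta-\lambda)\cdot\|u_n\|_{HW^{1,Q}}^{Q^\prime} < \alpha_Q$ for $r^\prime$ close to $1$, exactly as in the proof of Proposition \ref{prop3.5}. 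Hence the whole right-hand side is $o_n(1)$.

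With the right side handled, I would split the left side using the standard monotonicity. Write, for $t \in \{p,Q\}$,
\begin{align*}
\langle u_n, u_n - u\rangle_{H,t} - \langle u, u_n - u\rangle_{H,t} = \int_{\mathbb{H}^N}\!\Bigl(|D_H u_n|_H^{t-2}D_H u_n - |D_H u|_H^{t-2}D_H u, D_H u_n - D_H u\Bigr)_H \mathrm{d}\xi + \int_{\mathbb{H}^N}\!\bigl(|u_n|^{t-2}u_n - |u|^{t-2}u\bigr)(u_n - u)\,\mathrm{d}\xi,
\end{align*}
and note $\langle u, u_n-u\rangle_{H,t} \to 0$ since $u_n \rightharpoonup u$. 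Therefore, summing over $t=p,Q$, the sum of four nonnegative integrands (each of the form $(|a|^{t-2}a - |b|^{t-2}b)\cdot(a-b) \geq 0$, with the vector version following from the Heisenberg Cauchy--Schwarz inequality $|(X,Y)_H| \leq |X|_H|Y|_H$ recalled in the preliminaries) has integral tending to $0$. Each integrand is thus an $L^1(\mathbb{H}^N)$ function converging to $0$ in $L^1$, so along a subsequence each converges to $0$ almost everywhere. A pointwise algebraic lemma — the strict monotonicity of $a \mapsto |a|^{t-2}a$ on $\mathbb{R}$ and of $X \mapsto |X|_H^{t-2}X$ on $\mathrm{span}\{X_j,Y_j\}$, i.e. that $(|a|^{t-2}a - |b|^{t-2}b)(a-b) = 0$ forces $a = b$ — then yields $D_H u_n \to D_H u$ and $u_n \to u$ a.e. in $\mathbb{H}^N$. (For $p<2$ one uses the standard inequality $(|a|^{p-2}a-|b|^{p-2}b)(a-b) \geq c_p |a-b|^2/(|a|+|b|)^{2-p}$; for $t=Q \geq 4 > 2$ it is even simpler.)

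For the second stage, \eqref{eq3.333} follows by a Brezis--Lieb / convergence-of-gradients routine: $\{|D_H u_n|_H^{t-2}D_H u_n\}$ is bounded in $L^{t/(t-1)}(\mathbb{H}^N,\mathbb{R}^{2N})$ because $\{u_n\}$ is bounded in $\mathbf{X} \hookrightarrow HW^{1,t}$, and it converges a.e. to $|D_H u|_H^{t-2}D_H u$ by the continuity of the map and the a.e. convergence just proved; a bounded sequence in a reflexive $L^{t/(t-1)}$ that converges a.e. converges weakly to that a.e. limit (this is the standard fact, provable via the Dunford--Pettis theorem or by noting any weak-limit subsequence must agree a.e. with the pointwise limit). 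The same argument applies verbatim to $|u_n|^{t-2}u_n$. The main obstacle — and the only step needing genuine care — is showing the Choquard term $\gamma\int (\text{nonlocal})\,f(\xi,u_n)(u_n-u)/r(\xi)^\beta\,\mathrm{d}\xi \to 0$; this is where the subcriticality hypothesis \eqref{eq3.17} is indispensable and where one must redo the Vitali/Trudinger--Moser estimate of Proposition \ref{prop3.5} with the test function allowed to vary with $n$, rather than being a fixed $v \in \mathcal{C}^\infty_c$. One handles this by splitting $\mathbb{H}^N = B_R \cup B_R^c$: on $B_R$ use the compact embedding $\mathbf{X} \hookrightarrow L^\theta(B_R)$ together with the uniform $L^{2Q/(2Q-2\beta-\lambda)}$-bound on $f(\cdot,u_n)$ and Hölder; on $B_R^c$ use that $\|u_n\|_{L^Q(B_R^c)}$ and hence the tail of the relevant integral is small uniformly in $n$ once $R$ is large (growth condition \eqref{eq3.2} plus the uniform bounds), letting $R \to \infty$ afterward.
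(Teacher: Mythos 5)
Your second stage (a.e.\ convergence of $D_H u_n$ plus boundedness of $\{|D_H u_n|_H^{t-2}D_H u_n\}_n$ in $L^{t/(t-1)}$ implies the weak convergences \eqref{eq3.333}) is fine, and so is the monotonicity/pointwise-strict-monotonicity mechanism you propose for the first stage. The genuine gap is in the step you yourself flag as the crux: you test $J^\prime(u_n)$ against the \emph{global} function $u_n-u$ and then need
$\int_{\mathbb{H}^N}\bigl(\int_{\mathbb{H}^N}\frac{F(\eta,u_n)}{r(\eta)^\beta d_K(\xi,\eta)^\lambda}\,\mathrm{d}\eta\bigr)\frac{f(\xi,u_n)(u_n-u)}{r(\xi)^\beta}\,\mathrm{d}\xi\to 0$,
which by H\"older amounts to $\|f(\cdot,u_n)(u_n-u)\|_{\frac{2Q}{2Q-2\beta-\lambda}}\to 0$ over all of $\mathbb{H}^N$. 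Your proposed justification of the tail --- that $\|u_n\|_{L^Q(B_R^c)}$ is small uniformly in $n$ for $R$ large, by ``\eqref{eq3.2} plus the uniform bounds'' --- is not available: the hypotheses give only $u_n\rightharpoonup u$ in $\mathbf{X}$, $J^\prime(u_n)\to 0$ and \eqref{eq3.17}, and a weakly convergent (even Palais--Smale) sequence need not be tight at infinity; growth conditions and uniform norm bounds give no uniform decay of $u_n$ on $B_R^c$ (think of a profile escaping to infinity). The Vitali argument of Proposition \ref{prop3.5} works precisely because the test function $v$ (there fixed, with compact support) confines the integral to a fixed ball where Corollary \ref{cor2.3} gives strong $L^\theta$ convergence; replacing $v$ by $u_n-u$ destroys both the fixed support and, globally, the strong convergence, and no argument you give restores it. Note also that the paper deliberately avoids ever claiming this global limit: in its Section 4 arguments it only gets an inequality for such terms (via Fatou) and compensates with the Br\'ezis--Lieb splitting of Proposition \ref{prop4.3}.

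The repair is the localization you mention parenthetically but then abandon: test with $(u_n-u)\psi$, where $\psi\in\mathcal{C}^\infty_c(\mathbb{H}^N)$, $\psi\equiv 1$ on $B_R$, $\operatorname{supp}\psi\subset B_{2R}$. Then the Choquard term is confined to $B_{2R}$ and the Vitali/Trudinger--Moser estimate of Proposition \ref{prop3.5} goes through with $v$ replaced by $(u_n-u)\psi$, since $u_n\to u$ in $L^\theta(B_{2R})$ for every $\theta<\infty$; the weight term is handled by Lemma \ref{lem2.4} exactly as you say. The price is an extra commutator term $\int\bigl(|D_H u_n|_H^{t-2}D_H u_n-|D_H u|_H^{t-2}D_H u,D_H\psi\bigr)_H(u_n-u)\,\mathrm{d}\xi$, which you do not discuss but which vanishes by H\"older and the local compact embedding. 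One then obtains the monotonicity sum tending to zero only on $B_R$, concludes $D_H u_n\to D_H u$ a.e.\ in $B_R$ (up to a subsequence), and finishes by a diagonal argument over $R\to\infty$ --- which is all the proposition's a.e.\ statement requires. This is exactly the route the paper takes (it additionally packages the local lower bound via Simon's inequality to get strong convergence in $HW^{1,Q}(B_R)$, but your pointwise strict-monotonicity lemma serves the same purpose).
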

\begin{proof}
 By the hypotheses and Corollary \ref{cor2.3}, we obtain that
\begin{equation}\label{eq3.99}
   u_n\rightharpoonup u~\text{in}~\mathbf{X},~ u_n\to u ~\text{in}~L^s(B_R)~\text{and }~u_n\to u~\text{a.e. in } ~\mathbb{H}^N~\text{as}~n\to\infty 
\end{equation} 
for $s\in[1,\infty)$ and all $R>0$. Fix $R>0$ and $\psi\in \mathcal{C}^\infty_c(\mathbb{H}^N)$ such that $0\leq \psi\leq 1$ in $\mathbb{H}^N$, $\psi\equiv 1$ in $B_R$ and  $\psi\equiv 0$ in $B^c_{2R}$. In addition to this, since $J\in \mathcal{C}^1(\mathbf{X},\mathbb{R})$, $u_n\rightharpoonup u$ in $\mathbf{X}$, and $J^\prime(u_n)\to 0$ in $\mathbf{X}^\ast$ as $n\to\infty$, therefore we have 
\begin{equation}\label{eq3.25}
 \langle{J^\prime(u_n)-J^\prime(u),(u_n-u)\psi}\rangle=o_n(1)~\text{as}~n\to\infty.   
\end{equation}
For any $n\in\mathbb{N}$ and $t\in\{p,Q\}$, let us define 
$$ \boldsymbol{T}^t_n=\big(|D_H u_n|_H^{t-2} D_H u_n-|D_H u|_H^{t-2} D_H u, D_H u_n-D_H u\big)_H+\big(| u_n|^{t-2} u_n-|u|^{t-2}u\big) (u_n-u). $$
Moreover, by convexity, we can see that $\big(|D_H u_n|_H^{t-2} D_H u_n-|D_H u|_H^{t-2} D_H u, D_H u_n-D_H u\big)_H\geq 0~\text{a.e. in} ~\mathbb{H}^N$ and $\big(| u_n|^{t-2} u_n-|u|^{t-2}u\big) (u_n-u)\geq 0~\text{a.e. in} ~\mathbb{H}^N$ for any $n\in\mathbb{N}$ and $t\in\{p,Q\}$.  In virtue of Simon's inequality (see \cite{MR0519432}) with $Q=2N+2> 2$ and \eqref{eq3.25}, there exists $\kappa>0$ such that
\begin{align}\label{eq3.26}
 \kappa^{-1}\|u_n-u\|^Q_{HW^{1,Q}(B_R)}&= \kappa^{-1}\bigg(\int_{B_R} |D_H u_n-D_H u|_H^{Q}~\mathrm{d}\xi+ \int_{B_R} |u_n- u|^{Q}~\mathrm{d}\xi\bigg) \notag\\
 & \leq  \int_{B_R} \boldsymbol{T}^Q_n ~\mathrm{d}\xi \leq \displaystyle\sum_{t\in\{p,Q\}} \bigg(\int_{B_R}\boldsymbol{T}^t_n ~\mathrm{d}\xi\bigg) \leq \displaystyle\sum_{t\in\{p,Q\}} \bigg(\int_{\mathbb{H}^N}\boldsymbol{T}^t_n~ \psi~\mathrm{d}\xi\bigg) \notag\\
 & =-\displaystyle\sum_{t\in\{p,Q\}} \bigg(\int_{\mathbb{H}^N}\big(|D_H u_n|_H^{t-2} D_H u_n-|D_H u|_H^{t-2} D_H u, D_H \psi\big)_H (u_n-u) ~\mathrm{d}\xi\bigg) \notag\\
 & \quad +\gamma \displaystyle\int_{\mathbb{H}^N}\Bigg[\Bigg(\displaystyle\int_{\mathbb{H}^N}\frac{F(\eta,u_n)}{r(\eta)^\beta {d_K(\xi,\eta)}^\lambda}\mathrm{d}\eta\Bigg)\frac{f(\xi,u_n)}{r(\xi)^\beta} -\Bigg(\displaystyle\int_{\mathbb{H}^N}\frac{F(\eta,u)}{r(\eta)^\beta {d_K(\xi,\eta)}^\lambda}\mathrm{d}\eta\Bigg)\frac{f(\xi,u)}{r(\xi)^\beta}\Bigg](u_n-u)\psi\mathrm{d}\xi\notag\\
 & \quad +\mu\int_{\mathbb{H}^N} g(\xi)\big((u_n^+)^{s-1}-(u^+)^{s-1}\big)(u_n-u)\psi~~\mathrm{d}\xi+o_n(1)~\text{as}~n\to\infty. 
\end{align}
Due to the H\"older's inequality and \eqref{eq3.24}, we get for $t\in\{p,Q\}$ that 
\begin{align*}
    \Bigg| \int_{\mathbb{H}^N}\big(|D_H u_n|_H^{t-2} D_H u_n-|D_H u|_H^{t-2} D_H u, D_H \psi\big)_H (u_n-u)\mathrm{d}\xi\Bigg|&\leq \|D_H \psi\|_{\infty}\big(\|D_H u_n\|^{t-1}_t+\|D_H u\|^{t-1}_t\big)\bigg(\int_{B_{2R}}|u_n-u|^t\mathrm{d}\xi\bigg)^{\frac{1}{t}} \notag\\
    & \quad\to 0~\text{as}~n\to\infty.\end{align*}
It follows that for $t\in\{p,Q\}$, we have
\begin{equation}\label{eq3.27}
 \lim_{n\to\infty} \int_{\mathbb{H}^N}\big(|D_H u_n|_H^{t-2} D_H u_n-|D_H u|_H^{t-2} D_H u, D_H \psi\big)_H (u_n-u) ~\mathrm{d}\xi =0.  
\end{equation}
In the same way, once more via the H\"older's inequality, we obtain
$$\bigg|\int_{\mathbb{H}^N} g(\xi)\big((u_n^+)^{s-1}-(u^+)^{s-1}\big)(u_n-u)\psi~\mathrm{d}\xi\bigg| \leq \big(\|u_n\|^{s-1}_{s,g}+\|u\|^{s-1}_{s,g}\big)\|g\|^{\frac{1}{s}}_\vartheta \bigg(\int_{B_{2R}}|u_n-u|^Q~\mathrm{d}\xi\bigg)^{\frac{1}{Q}}\to 0~\text{as}~n\to\infty.$$
This shows that 
\begin{equation}\label{eq3.28}
 \lim_{n\to\infty}\int_{\mathbb{H}^N} g(\xi)\big((u_n^+)^{s-1}-(u^+)^{s-1}\big)(u_n-u)\psi~\mathrm{d}\xi =0.  
\end{equation}
Next, we claim that 
\begin{equation}\label{eq3.29}
   \lim_{n\to\infty} \displaystyle\int_{\mathbb{H}^N}\Bigg(\displaystyle\int_{\mathbb{H}^N}\frac{F(\eta,u_n)}{r(\eta)^\beta {d_K(\xi,\eta)}^\lambda}~\mathrm{d}\eta\Bigg)\frac{f(\xi,u_n)}{r(\xi)^\beta} (u_n-u)\psi~\mathrm{d}\xi=0. 
\end{equation}
 First notice that \eqref{eq3.19} holds by using the same idea as in Proposition \ref{prop3.5}. By employing the H\"older's inequality, to complete the proof of \eqref{eq3.29}, it is sufficient to show that 
\begin{align}\label{eq3.30}
    \|f(\cdot,u_n)(u_n-u)\psi\|_{{\frac{2Q}{2Q-2\beta-\lambda},~ B_{2R}}}\to 0~~\text{as}~~n\to\infty.
\end{align}
In view of \eqref{eq3.17}, there exists $n_0\in\mathbb{N}$ and $\delta>0$ such that $\|u_n\|^{Q^\prime}_{HW^{1,Q}}<\delta<\alpha_Q$ for all $n\geq n_0$. Suppose that $\zeta$, $\zeta^\prime>1$ satisfying $\frac{1}{\zeta}+\frac{1}{\zeta^\prime}=1$. Choose $\alpha>\alpha_0$ very close to $\alpha_0$ and $\zeta^\prime>1$ very close to 1 such that we still have $\displaystyle\frac{2\alpha Q \zeta^\prime }{2Q-2\beta-\lambda} \|u_n\|^{Q^\prime}_{HW^{1, Q}}<\delta<\alpha_Q$ for all $n\geq n_0$. Consequently, arguing similarly as in Proposition \ref{prop3.5}, we have
\begin{align*}
  \int_{B_{2R}}|f(\xi,u_n)(u_n-u)\psi|^{\frac{2Q}{2Q-2\beta-\lambda}}~\mathrm{d}\xi&\leq  C \Bigg[\bigg(\int_{B_{2R}}|u_n-u|^{\frac{2Q^2}{2Q-2\beta-\lambda}}~\mathrm{d}\xi\bigg)^{\frac{1}{Q}}  +\bigg(\int_{B_{2R}}|u_n-u|^{\frac{2Q\zeta}{2Q-2\beta-\lambda}}~\mathrm{d}\xi\bigg)^{\frac{1}{\zeta}}\Bigg] \notag\\
    & \qquad \to 0~~\text{as}~~n\to\infty,
\end{align*}
we thank to \eqref{eq3.24}, where for $\widetilde{u}_n=u_n/{\|u_n\|_{HW^{1,Q}}}$, the constant $C$ is defined by
\begin{align*}
    C&=\max\Bigg\{ \sup_{n\in\mathbb{N}}\|u_n\|^{\frac{2Q(Q-1)}{2Q-2\beta-\lambda}}_{\mathbf{X}},~\bigg(\sup_{n\geq n_0}\int_{\mathbb{H}^N}\Phi\bigg(\frac{2\alpha Q \zeta^\prime \|u_n\|^{Q^\prime}_{HW^{1, Q}}}{2Q-2\beta-\lambda}|\widetilde{u}_n|^{Q^\prime} \bigg)~\mathrm{d}\xi\bigg)^{\frac{1}{\zeta^\prime}}~\Bigg\}<+\infty,
\end{align*}
due to the uniform boundedness of $\{u_n\}_n$ in $\mathbf{X}$ and Theorem \ref{thm2.7}. This shows that \eqref{eq3.30} holds and hence we obtain the claim \eqref{eq3.29}. Similarly, we can prove that
\begin{equation}\label{eq3.31}
   \lim_{n\to\infty} \displaystyle\int_{\mathbb{H}^N}\Bigg(\displaystyle\int_{\mathbb{H}^N}\frac{F(\eta,u)}{r(\eta)^\beta {d_K(\xi,\eta)}^\lambda}~\mathrm{d}\eta\Bigg)\frac{f(\xi,u)}{r(\xi)^\beta} (u_n-u)\psi~\mathrm{d}\xi=0. 
\end{equation}
Passing $n\to\infty$ in \eqref{eq3.26} and using the convergence results \eqref{eq3.27}--\eqref{eq3.29} and \eqref{eq3.31}, we get $u_n\to u$ in $HW^{1,Q}(B_{R})$ as $n\to\infty$. Consequently, we get $D_H u_n\to D_H u$ in $L^{Q}(B_{R},\mathbb{R}^{2N})$ as $n\to\infty$ for all $R>0$. Hence, up to a subsequence, not relabelled, we obtain $D_H u_n\to D_H u$  a.e. in $\mathbb{H}^N$ as $n\to\infty$. This together with \eqref{eq3.24} imply that $|D_H u_n|^{t-1}D_H u_n\to |D_H u|^{t-1}D_H u$ a.e. in $\mathbb{H}^N$ and $|u_n|^{t-1} u_n\to | u|^{t-1} u$ a.e. in $\mathbb{H}^N$ as $n\to\infty$ for $t\in\{p,Q\}$. Now using the facts that $\{|D_H u_n|^{t-1}D_H u_n\}_n$ and $\{|u_n|^{t-1} u_n\}_n$ are bounded in $L^{\frac{t}{t-1}}(\mathbb{H}^N,\mathbb{R}^{2N})~\text{and}~L^{\frac{t}{t-1}}(\mathbb{H}^N)$ respectively for $t\in\{p, Q\}$, we deduce that \eqref{eq3.333} holds. This ends the proof of Proposition \ref{prop3.6}. 
\end{proof}
\begin{proof}[\textbf{Proof of Theorem \ref{thm1.1}}.]
Let $\gamma>0$ be fixed and $\mu\in (0,\widehat{\mu})$, with $\widehat{\mu}=\text{min}\{\mu^\ast,\mu^0\}$, where $\mu^\ast$ as in Lemma \ref{lem3.2}, while we define $\mu^0$ by
\begin{equation}\label{eq3.33}
 \mu^0=\frac{s(\sigma-Q)}{Q\|g\|_\vartheta (\sigma-s)} \alpha^{\frac{Q-s}{Q^\prime}}_Q >0. 
\end{equation}
  Due to Lemma \ref{lem3.4}, there exists a sequence $\{u_n\}_n$ of nonnegative functions in $\overline{B}_\rho$ such that \eqref{eq3.11} holds. Now replacing $u$ with $u_{\mu,\gamma}$ in \eqref{eq3.99}, one can observe that \eqref{eq3.99} still holds. Moreover, by using Lemma \ref{lem2.4}, Lemma \ref{lem3.4} and $(f3)$, we obtain as $n\to\infty$
\begin{align*}
0&>m_{\mu,\gamma}= J(u_n)-\frac{1}{\sigma}\langle{J^\prime(u_n),u_n}\rangle+o_n(1)\\
&\geq \bigg(\frac{1}{p}-\frac{1}{\sigma}\bigg)\|u_n\|^{p}_{HW^{1,p}}+\bigg(\frac{1}{Q}-\frac{1}{\sigma}\bigg)\|u_n\|^{Q}_{HW^{1,Q}}-\mu\bigg(\frac{1}{s}-\frac{1}{\sigma}\bigg)\|g\|_\vartheta\|u_n\|^s_{HW^{1,Q}}+o_n(1)  \\
&\geq\bigg(\frac{1}{Q}-\frac{1}{\sigma}\bigg)\|u_n\|^{Q}_{HW^{1,Q}}-\mu\bigg(\frac{1}{s}-\frac{1}{\sigma}\bigg)\|g\|_\vartheta\|u_n\|^s_{HW^{1,Q}}+o_n(1). \end{align*}
This together with \eqref{eq3.33} yields
$$\limsup_{n\to\infty}\|u_n\|^{Q^\prime}_{HW^{1,Q}}\leq\Bigg(\frac{\mu(\sigma-s)Q\|g\|_\vartheta}{s(s-Q)}\Bigg)^{\frac{Q^\prime}{Q-s}}< \alpha_Q.   $$
In light of the above fact, we can argue as in the proof of Proposition \ref{prop3.6} to deduce that $D_H u_n\to D_H u_{\mu,\gamma}$ a.e. in $\mathbb{H}^N$ as $n\to\infty$. Further, notice that \eqref{eq3.333} holds whenever $u$ is replaced by $u_{\mu,\gamma}$. Choose $v\in\mathcal{C}^\infty_c(\mathbb{H}^N)$, then exploiting the density of $\mathcal{C}^\infty_c(\mathbb{H}^N)$ in $HW^{1,\wp}(\mathbb{H}^N)$ for any $\wp\in(1,\infty)$ , we obtain for $t\in\{p,Q\}$ that 
\begin{equation}\label{eq3.35}
  \int_{\mathbb{H}^N}|D_H u_n|_H^{t-2}(D_H u_n,D_H v)_H~\mathrm{d}\xi\to \int_{\mathbb{H}^N}|D_H u_{\mu,\gamma}|_H^{t-2}(D_H u_{\mu,\gamma},D_H v)_H~\mathrm{d}\xi~~\text{as}~n\to\infty  
\end{equation}
and 
\begin{equation}\label{eq3.36}
  \int_{\mathbb{H}^N}u_n^{t-1} v~\mathrm{d}\xi\to \int_{\mathbb{H}^N}u_{\mu,\gamma}^{t-1}v~\mathrm{d}\xi~~\text{as}~n\to\infty.  
\end{equation}
Because of Lemma \ref{lem2.4}, we deduce that $\{u_n\}_n$ is bounded in $L^s_g(\mathbb{H}^N)$. It follows at once that $\{g(\cdot)^{\frac{s-1}{s}}u_n^{s-1}\}_n$ is bounded in $L^{\frac{s-1}{s}}(\mathbb{H}^N)$. Consequently, using the facts that $g(\cdot)^{\frac{s-1}{s}}u_n^{s-1}\to g(\cdot)^{\frac{s-1}{s}}u_{\mu,\gamma}^{s-1}$ a.e. in $\mathbb{H}^N$, we have 
$$ g(\cdot)^{\frac{s-1}{s}}u_n^{s-1}\rightharpoonup g(\cdot)^{\frac{s-1}{s}}u_{\mu,\gamma}^{s-1}~\text{in}~L^{\frac{s-1}{s}}(\mathbb{H}^N)~\text{as}~n\to\infty.$$
Once more by Lemma \ref{lem2.4} and the density of $\mathcal{C}^\infty_c(\mathbb{H}^N)$ in $HW^{1,Q}(\mathbb{H}^N)$, we can see that $g(\cdot)^{\frac{1}{s}}v\in L^{s}(\mathbb{H}^N)$. Therefore, we get  
\begin{equation}\label{eq3.37}
  \int_{\mathbb{H}^N}g(\xi)u_n^{s-1} v~\mathrm{d}\xi\to \int_{\mathbb{H}^N} g(\xi)u_{\mu,\gamma}^{s-1}v~\mathrm{d}\xi~~\text{as}~n\to\infty.  
\end{equation}
By Proposition \ref{prop3.5}, we also have
\begin{align}\label{eq3.38}
\displaystyle\int_{\mathbb{H}^N}\Bigg(\displaystyle\int_{\mathbb{H}^N}\frac{F(\eta,u_n)}{r(\eta)^\beta {d_K(\xi,\eta)}^\lambda}~\mathrm{d}\eta\Bigg)\frac{f(\xi,u_n)}{r(\xi)^\beta} v~\mathrm{d}\xi\to\displaystyle\int_{\mathbb{H}^N}\Bigg(\displaystyle\int_{\mathbb{H}^N}\frac{F(\eta,u_{\mu,\gamma})}{r(\eta)^\beta {d_K(\xi,\eta)}^\lambda}~\mathrm{d}\eta\Bigg)\frac{f(\xi,u_{\mu,\gamma})}{r(\xi)^\beta} v~\mathrm{d}\xi~\text{as}~n\to\infty.
\end{align}
Finally, due the convergence results of \eqref{eq3.35}--\eqref{eq3.38} and using the facts that $\langle{J^\prime(u_n),v}\rangle=o_n(1)~\text{as}~n\to\infty$, we deduce that $\langle{J^\prime(u_{\mu,\gamma}),v}\rangle=0$, for all $v\in\mathcal{C}^\infty_c(\mathbb{H}^N)$.

Suppose $\varrho\in \mathcal{C}^\infty_c(\mathbb{H}^N)$ such that $0\leq \varrho\leq 1,~\varrho\equiv 1~\text{in}~B_1 $ and $\text{supp}(\varrho)\subset B_2$. Define the sequence of cut-off functions $$\varrho_n(\xi)=\varrho\big(\delta_{\frac{1}{n}}(\xi)\big), ~\xi\in \mathbb{H}^N, $$
where $\delta_{\frac{1}{n}}$ is the dilation of parameter $\frac{1}{n}$ as in \eqref{eq2.2}. Now choosing $\varphi\in\mathbf{X}$, then the sequence $\{\varphi_n\}_n$, which is defined by $\varphi_n=\varrho_n\big(\rho_n\ast\varphi\big)$, where $\{\rho_n\}_n$ is the sequence of mollifiers and $\{\varrho_n\}_n$ is the sequence of cut-off functions with the property that $\varphi_n\to\varphi$ in $\mathbf{X}$ as $n\to\infty$. This shows that up to the subsequence $\varphi_n\to\varphi$, $D_H \varphi_n\to D_H \varphi$ a.e. in $\mathbb{H}^N$ as $n\to\infty$, and there exists functions $g_1\in L^p(\mathbb{H}^N)$ and $g_2\in L^Q(\mathbb{H}^N)$ such that $|\varphi_n|\leq g_i$, $|D_H \varphi_n|\leq g_i$ a.e. in $\mathbb{H}^N$ for all $n$ and $i=1,2$. Therefore, it follows that $\langle{J^\prime(u_{\mu,\gamma}), \varphi_n}\rangle=0$ for all $n\in\mathbb{N}$ and thus passing to the limit as $n\to\infty$ under the sign of integrals along with the Lebesgue's dominated convergence theorem or Vitali's convergence theorem, we obtain that $\langle{J^\prime(u_{\mu,\gamma}), \varphi}\rangle=0$ for all $\varphi\in\mathbf{X}$. Consequently, we obtain that $u_{\mu,\gamma}$ is a solution of \eqref{main problem}.

Moreover, since $\{u_n\}_n$ is bounded in $HW^{1,t}(\mathbb{H}^N)$ for $t\in\{p, Q\}$, therefore up to a subsequence, not relabelled, there exists $l_p,l_Q\geq 0$ such that we have $\|u_n\|_{HW^{1,t}}\to l_t$ in $\mathbb{R}$ as $n\to\infty$. By using Lemma \ref{lem2.4} and \eqref{eq3.99}, we infer that 
\begin{equation}\label{eq3.39}
  \int_{\mathbb{H}^N}g(\xi)u_n^{s} ~\mathrm{d}\xi\to \int_{\mathbb{H}^N} g(\xi)u_{\mu,\gamma}^{s}~\mathrm{d}\xi~~\text{as}~n\to\infty.    
\end{equation}
Define $$I(u)= \displaystyle\int_{\mathbb{H}^N}\Bigg(\displaystyle\int_{\mathbb{H}^N}\frac{F(\eta,u)}{r(\eta)^\beta {d_K(\xi,\eta)}^\lambda}~\mathrm{d}\eta\Bigg)\bigg(\frac{2f(\xi,u)u-\sigma F(\xi,u)}{r(\xi)^\beta} \bigg)~\mathrm{d}\xi.$$
Then, it follows from \eqref{eq3.11}, $(f2)$ and the Fatou's lemma that
\begin{equation}\label{eq3.40}
 \liminf_{n\to\infty} I(u_n)\geq I(u_{\mu,\gamma}).
\end{equation}
In view of \eqref{eq3.11}, \eqref{eq3.39} and \eqref{eq3.40}, we obtain
\begin{align*}    m_{\mu,\gamma}&=\lim_{n\to\infty}\bigg( J(u_n)-\frac{1}{\sigma}\langle{J^\prime(u_n),u_n}\rangle\bigg) \notag\\
&= \bigg(\frac{1}{p}-\frac{1}{\sigma}\bigg)l_p^{p}+\bigg(\frac{1}{Q}-\frac{1}{\sigma}\bigg)l_Q^{Q}-\mu\bigg(\frac{1}{s}-\frac{1}{\sigma}\bigg)\|u_{\mu,\gamma}\|^s_{s,g}+\frac{\gamma}{2\sigma} \liminf_{n\to\infty} I(u_n) \notag\\
&\geq \bigg(\frac{1}{p}-\frac{1}{\sigma}\bigg)\|u_{\mu,\gamma}\|^{p}_{HW^{1,p}}+\bigg(\frac{1}{Q}-\frac{1}{\sigma}\bigg)\|u_{\mu,\gamma}\|^{Q}_{HW^{1,Q}}-\mu\bigg(\frac{1}{s}-\frac{1}{\sigma}\bigg)\|u_{\mu,\gamma}\|^s_{s,g}+\frac{\gamma}{2\sigma}I(u_{\mu,\gamma}) \notag\\
&=J(u_{\mu,\gamma})-\frac{1}{\sigma}\langle{J^\prime(u_{\mu,\gamma}),u_{\mu,\gamma}}\rangle=J(u_{\mu,\gamma})\geq m_{\mu,\gamma}. 
\end{align*}
This shows that the solution $u_{\mu,\gamma}$ is a minimizer of $J$ in $\overline{B}_\rho$. Consequently, we also have $J(u_{\mu,\gamma})=m_{\mu,\gamma}<0<\jmath\leq J(u)$ for $\|u\|_{\mathbf{X}}=\rho$, thanks to Lemma \ref{lem3.2}. Therefore, we conclude that $u_{\mu,\gamma}$ is in $B_\rho$, which is a nontrivial solution of \eqref{main problem} for all $\mu\in (0,\widehat{\mu})$. 

Notice that $u_{\mu,\gamma}\in B_\rho$ with $\rho>0$ independent of $\mu$ and hence $\{u_{\mu,\gamma}\}_{\mu\in (0,\widehat{\mu})}$ is uniformly bounded in $\mathbf{X}$. Further, from \eqref{eq3.11} and $(f2)$, we get as $n\to\infty$
\begin{align*}
 0>m_{\mu,\gamma}&\geq \bigg(\frac{1}{p}-\frac{1}{\sigma}\bigg)\|u_{\mu,\gamma}\|^{p}_{HW^{1,p}}+\bigg(\frac{1}{Q}-\frac{1}{\sigma}\bigg)\|u_{\mu,\gamma}\|^{Q}_{HW^{1,Q}}-\mu\bigg(\frac{1}{s}-\frac{1}{\sigma}\bigg)\|g\|_\vartheta\|u_{\mu,\gamma}\|^s_{HW^{1,Q}} \notag\\
 & \geq \bigg(\frac{1}{p}-\frac{1}{\sigma}\bigg)\|u_{\mu,\gamma}\|^{p}_{HW^{1,p}}+\bigg(\frac{1}{Q}-\frac{1}{\sigma}\bigg)\|u_{\mu,\gamma}\|^{Q}_{HW^{1,Q}}-\mu C_g,
\end{align*}
where $$C_g=\bigg(\frac{1}{s}-\frac{1}{\sigma}\bigg)\|g\|_\vartheta \sup_{\mu\in (0,\widehat{\mu})}\|u_{\mu,\gamma}\|^s_{\mathbf{X}}<\infty. $$
This yields
$$0\geq \limsup_{\mu\to 0^+} m_{\mu,\gamma}\geq \limsup_{\mu\to 0^+}\Bigg[\bigg(\frac{1}{p}-\frac{1}{\sigma}\bigg)\|u_{\mu,\gamma}\|^{p}_{HW^{1,p}}+\bigg(\frac{1}{Q}-\frac{1}{\sigma}\bigg)\|u_{\mu,\gamma}\|^{Q}_{HW^{1,Q}}\Bigg]\geq 0. $$
It follows that $$\lim_{\mu\to 0^+}\|u_{\mu,\gamma}\|_{HW^{1,p}}=\lim_{\mu\to 0^+}\|u_{\mu,\gamma}\|_{HW^{1,Q}}=0,~\text{i.e.,}~\lim_{\mu\to 0^+}\|u_{\mu,\gamma}\|_{\mathbf{X}}=0.$$
This completes the proof of Theorem \ref{thm1.1}. 
\end{proof}
\section{Existence of the second solution}\label{sec4}
For the sake of simplicity, we believe in this section that the structural assumptions required for Theorem \ref{thm1.2} are valid.
It is evident from Lemma \ref{lem3.2} and Lemma \ref{lem3.3} that the energy functional $J$ fulfils the geometrical structures of the mountain pass theorem.
In order to implement the mountain pass theorem, it is necessary to verify the validity of the Palais-Smale compactness condition at an appropriate level c. We say $\{u_n\}_n\subset\mathbf{X}$ is a $(PS)_c$ sequence for $J$ at any suitable level $c$ if 
\begin{equation}\label{eq4.1}
  J(u_n)\to c~\text{and}~\sup_{\|\varphi\|_{\mathbf{X}}=1}|\langle{J^\prime(u_n),\varphi}\rangle|\to 0~\text{as}~n\to\infty.  
\end{equation}
Note that $J$ meets the $(PS)_c$ condition at any suitable level $c$ if this sequence has a convergent subsequence in $\mathbf{X}$.
\begin{lemma}\label{lem4.1}
 Suppose $\mu,\gamma>0$ be fixed. Then, the $(PS)_c$ sequence $\{u_n\}_n\subset\mathbf{X}$ for $J$ at any level $c>0$ is bounded in $\mathbf{X}$ and satisfying
 \begin{equation}\label{eq4.444}
   c+\Theta\mu^{\vartheta}+o_n(1)\geq \boldsymbol{\kappa} \big(\|u_n\|^{p}_{HW^{1,p}}+\|u_n\|^{Q}_{HW^{1,Q}}\big)~\text{as}~n\to\infty,   
 \end{equation}
 where  $\Theta$ and $\boldsymbol{\kappa} $ are defined by
 $$\Theta=\Big(\frac{\sigma-s}{Q}\Big)^{\frac{Q}{Q-s}} \Big(\frac{2Q}{\sigma-Q}\Big)^{\frac{s}{Q-s}}\frac{(Q-s)\|g\|^\vartheta_\vartheta}{\sigma s},~\text{and}~ \boldsymbol{\kappa}=\frac{\sigma-Q}{2Q\sigma}.$$
\end{lemma}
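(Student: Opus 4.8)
The plan is to test the Palais--Smale sequence against itself, combine this identity with the energy level, use $(f2)$ to discard the Stein--Weiss remainder (which turns out to be nonnegative), and then absorb the subcritical term by Young's inequality; the boundedness of $\{u_n\}_n$ then follows from the superlinearity of the $HW^{1,p}$ and $HW^{1,Q}$ terms. Fix $\mu,\gamma>0$ and let $\{u_n\}_n\subset\mathbf{X}$ be a $(PS)_c$ sequence with $c>0$. From \eqref{eq4.1} we have $J(u_n)=c+o_n(1)$ and $|\langle J'(u_n),u_n\rangle|\leq\|J'(u_n)\|_{\mathbf{X}^\ast}\|u_n\|_{\mathbf{X}}=o_n(1)\|u_n\|_{\mathbf{X}}$. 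Expanding $J(u_n)-\tfrac1\sigma\langle J'(u_n),u_n\rangle$ with \eqref{eq3.1} and \eqref{eq3.5}, the Sobolev-norm contributions give $\big(\tfrac1p-\tfrac1\sigma\big)\|u_n\|^p_{HW^{1,p}}+\big(\tfrac1Q-\tfrac1\sigma\big)\|u_n\|^Q_{HW^{1,Q}}$, the $g$-term gives $-\mu\big(\tfrac1s-\tfrac1\sigma\big)\int_{\mathbb{H}^N}g(u_n^+)^s\,\mathrm{d}\xi$, and the convolution terms combine into
\[
\frac{\gamma}{2\sigma}\int_{\mathbb{H}^N}\Bigg(\int_{\mathbb{H}^N}\frac{F(\eta,u_n)}{r(\eta)^\beta d_K(\xi,\eta)^\lambda}\,\mathrm{d}\eta\Bigg)\frac{2f(\xi,u_n)u_n-\sigma F(\xi,u_n)}{r(\xi)^\beta}\,\mathrm{d}\xi,
\]
which is nonnegative: by $(f1)$ one has $F\geq 0$, so the inner convolution is $\geq 0$, while $(f2)$ gives $2f(\xi,u_n)u_n-\sigma F(\xi,u_n)\geq 0$ pointwise. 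Dropping this term yields
\[
c+o_n(1)+o_n(1)\|u_n\|_{\mathbf{X}}\geq\Big(\tfrac1p-\tfrac1\sigma\Big)\|u_n\|^p_{HW^{1,p}}+\Big(\tfrac1Q-\tfrac1\sigma\Big)\|u_n\|^Q_{HW^{1,Q}}-\mu\Big(\tfrac1s-\tfrac1\sigma\Big)\int_{\mathbb{H}^N}g(u_n^+)^s\,\mathrm{d}\xi.
\]

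Next I would absorb the subcritical term. Since $1<s<p<Q<\sigma$, we have $\tfrac1p-\tfrac1\sigma\geq\tfrac1Q-\tfrac1\sigma=\tfrac{\sigma-Q}{Q\sigma}=2\boldsymbol{\kappa}$, so both leading coefficients dominate $\boldsymbol{\kappa}$; I keep $\boldsymbol{\kappa}\|u_n\|^p_{HW^{1,p}}+\boldsymbol{\kappa}\|u_n\|^Q_{HW^{1,Q}}$ and retain the leftover $\boldsymbol{\kappa}\|u_n\|^Q_{HW^{1,Q}}$ for the absorption. By Lemma \ref{lem2.4}, $\int_{\mathbb{H}^N}g(u_n^+)^s\,\mathrm{d}\xi\leq\|g\|_\vartheta\|u_n\|^s_{HW^{1,Q}}$, and Young's inequality with conjugate exponents $\tfrac Qs$ and $\vartheta=\tfrac{Q}{Q-s}$, applied with the optimal splitting parameter $\epsilon=\big(\tfrac{\sigma-Q}{2s\sigma}\big)^{s/Q}$, gives
\[
\mu\Big(\tfrac1s-\tfrac1\sigma\Big)\|g\|_\vartheta\|u_n\|^s_{HW^{1,Q}}\leq\boldsymbol{\kappa}\,\|u_n\|^Q_{HW^{1,Q}}+\tfrac{Q-s}{Q}\epsilon^{-\vartheta}\Big(\tfrac{\sigma-s}{\sigma s}\Big)^\vartheta\|g\|_\vartheta^\vartheta\,\mu^{\vartheta},
\]
and a direct simplification of the last coefficient (using $\tfrac{s}{Q-s}+1=\tfrac{Q}{Q-s}$) shows it equals exactly $\Theta$. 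Substituting back produces
\[
c+\Theta\mu^{\vartheta}+o_n(1)+o_n(1)\|u_n\|_{\mathbf{X}}\geq\boldsymbol{\kappa}\big(\|u_n\|^p_{HW^{1,p}}+\|u_n\|^Q_{HW^{1,Q}}\big).
\]

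To finish, I would run the usual superlinearity argument: if $\|u_n\|_{\mathbf{X}}=\|u_n\|_{HW^{1,p}}+\|u_n\|_{HW^{1,Q}}\to\infty$ along a subsequence, then one of the two summands is $\geq\tfrac12\|u_n\|_{\mathbf{X}}$, and since $p,Q>1$ that summand raised to its power $\geq p$ makes the right-hand side above grow strictly faster than the at-most-linear left-hand side, a contradiction; hence $\{u_n\}_n$ is bounded in $\mathbf{X}$. Once boundedness is known, $o_n(1)\|u_n\|_{\mathbf{X}}=o_n(1)$, and the last display becomes precisely \eqref{eq4.444}. The only genuinely delicate step is the sign analysis of the Stein--Weiss remainder --- where $(f1)$ and $(f2)$, together with the positivity of the Riesz-type kernel, are essential --- and the bookkeeping needed to land on the exact constants $\Theta$ and $\boldsymbol{\kappa}$ from the Young step; since both leading terms are superlinear, boundedness is essentially automatic, so I do not expect a real obstacle beyond this.
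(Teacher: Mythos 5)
Your proposal is correct and follows essentially the same route as the paper: test the $(PS)_c$ sequence with $u_n$ in $J(u_n)-\tfrac{1}{\sigma}\langle J'(u_n),u_n\rangle$, drop the nonnegative Stein--Weiss remainder via $(f1)$--$(f2)$, bound the weighted term through Lemma \ref{lem2.4} and absorb it by Young's inequality with a splitting parameter chosen so the surviving coefficient is $\boldsymbol{\kappa}$, and then conclude boundedness from the superlinear growth (the paper just phrases this last step as three explicit cases, and leaves the Young constant as a generic $C_\epsilon$ rather than computing, as you do, that it equals $\Theta$). No gaps.
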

\begin{proof}
Let $\{u_n\}_n\subset\mathbf{X}$ be a $(PS)_c$ sequence for $J$ at level $c>0$ and hence \eqref{eq4.1} is satisfied. Therefore, by using $(f2)$, the H\"older's inequality and Young's inequality with $\epsilon$, we have as $n\to\infty$,
\begin{align*}c+o_n(1)+o_n(1)\|u_n\|_{\mathbf{X}}&=J(u_n)-\frac{1}{\sigma}\langle{J^\prime(u_n),u_n}\rangle \\
&\geq \bigg(\frac{1}{p}-\frac{1}{\sigma}\bigg)\|u_n\|^{p}_{HW^{1,p}}+\bigg(\frac{1}{Q}-\frac{1}{\sigma}\bigg)\|u_n\|^{Q}_{HW^{1,Q}}-\mu\bigg(\frac{1}{s}-\frac{1}{\sigma}\bigg)\|g\|_\vartheta\|u_n\|^{s}_{HW^{1,Q}} \\
&\geq \bigg(\frac{1}{p}-\frac{1}{\sigma}\bigg)\|u_n\|^{p}_{HW^{1,p}}+\bigg[\bigg(\frac{1}{Q}-\frac{1}{\sigma}\bigg)-\epsilon \bigg(\frac{1}{s}-\frac{1}{\sigma}\bigg)\bigg]\|u_n\|^{Q}_{HW^{1,Q}}-C_\epsilon\bigg(\frac{1}{s}-\frac{1}{\sigma}\bigg)\mu^\vartheta\|g\|^\vartheta_{\vartheta}.\end{align*}
Choose $\epsilon=\big(\frac{1}{Q}-\frac{1}{\sigma}\big)\big/2\big(\frac{1}{s}-\frac{1}{\sigma}\big)$, $\Theta=C_\epsilon\bigg(\frac{1}{s}-\frac{1}{\sigma}\bigg)\|g\|^\vartheta_{\vartheta}$, and $\boldsymbol{\kappa}=\frac{\sigma-Q}{2Q\sigma}$, then we get from the above inequality that
\begin{equation}\label{eq4.2}
 c+\Theta \mu^\vartheta+ o_n(1)+o_n(1)\|u_n\|_{\mathbf{X}}\geq \boldsymbol{\kappa}\big(\|u_n\|^{p}_{HW^{1,p}}+\|u_n\|^{Q}_{HW^{1,Q}}\big)~\text{as}~n\to\infty. 
 \end{equation}
If possible, let $\{u_n\}_n$ be not bounded in $\mathbf{X}$, then we  have following possibilities:\\
\textbf{Case-1:}
Let $\|u_n\|_{HW^{1,p}}\to\infty$ and $\|u_n\|_{HW^{1,Q}}\to\infty$ as $n\to\infty$. Observe that $p<Q$, therefore we have $\|u_n\|^{Q}_{HW^{1,Q}}\geq \|u_n\|^{p}_{HW^{1,Q}}>1$ for $n$ large enough. It follows from \eqref{eq4.2} that
$$ c+\Theta \mu^\vartheta+ o_n(1)+o_n(1)\|u_n\|_{\mathbf{X}}\geq 2^{1-p}\boldsymbol{\kappa}\|u_n\|^p_\mathbf{X}~\text{as}~n\to\infty.$$
Dividing $\|u_n\|^{p}_{\mathbf{X}}$ on both the sides and letting $n\to\infty$, we get $0\geq 2^{1-p}\boldsymbol{\kappa}>0 $, which is a contradiction.\\
\textbf{Case-2:}
Let $\|u_n\|_{HW^{1,p}}\to\infty$ as $n\to\infty$ and $\|u_n\|_{W^{1,Q}}$ be bounded. From \eqref{eq4.2}, we infer that
$$ c+\Theta \mu^\vartheta+ o_n(1)+o_n(1)\|u_n\|_{\mathbf{X}}\geq \boldsymbol{\kappa}\|u_n\|^{p}_{HW^{1,p}}~\text{as}~n\to\infty.$$
Dividing $\|u_n\|^{p}_{HW^{1,p}}$ on both the sides and letting $n\to\infty$, we get $0\geq \boldsymbol{\kappa}>0 $, which is again a contradiction.\\
\textbf{Case-3:}
Let $\|u_n\|_{HW^{1,Q}}\to\infty$ as $n\to\infty$ and $\|u_n\|_{HW^{1,p}}$ be bounded. Similarly, we get a contradiction as in \textit{Case-2}.

Thus, we conclude from the above three situations that $\{u_n\}_n\subset\mathbf{X}$ must be bounded. Therefore, taking into account \eqref{eq4.2}, the proof of the lemma is completed.
\end{proof}
The following result is a direct consequence of Lemma \ref{lem4.1}.
\begin{corollary}\label{cor4.2}
 Let $\mu\in(0,\mu^\ast]$, where $\mu^\ast$ as in Lemma \ref{lem3.2} and $\gamma>0$ be fixed. Now, fix $c<c_0-\Theta\mu^{\vartheta}$, where $c_0>0$ satisfying
\begin{equation}\label{eq4.3}
  c_0\leq \boldsymbol{\kappa}\alpha^{Q-1}_Q,   
\end{equation}
where $\boldsymbol{\kappa}$, and $\Theta$ are defined as in Lemma \ref{lem4.1}. Then, for any $(PS)_c$ sequence $\{u_n\}_n\subset\mathbf{X}$ of $J$ at level $c>0$, there holds $\displaystyle\limsup_{n\to\infty}\|u_n\|^{Q^\prime}_{HW^{1,Q}}<\alpha_Q$. In addition, if $\alpha>\alpha_0$, where $\alpha_0$ as in $(f3)$, then there exists $n_0\in\mathbb{N}$ sufficiently large enough such that
\begin{align*}
 \sup_{n\geq n_0}\Bigg|\displaystyle\int_{\mathbb{H}^N}\Bigg(\displaystyle\int_{\mathbb{H}^N}\frac{F(\eta,u_n)}{r(\eta)^\beta {d_K(\xi,\eta)}^\lambda}~\mathrm{d}\eta\Bigg)\frac{f(\xi,u_n)}{r(\xi)^\beta} u_n~\mathrm{d}\xi\Bigg|<+\infty.    
\end{align*}   
\end{corollary}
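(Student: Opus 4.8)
The plan is to obtain both assertions directly from Lemma~\ref{lem4.1}, the second one by reproducing the estimates already carried out for \eqref{eq3.24}. First I would fix a $(PS)_c$ sequence $\{u_n\}_n$ for $J$ at level $c>0$. By Lemma~\ref{lem4.1} it is bounded in $\mathbf{X}$ and satisfies \eqref{eq4.444}; passing to the superior limit there and using the standing hypotheses $c<c_0-\Theta\mu^{\vartheta}$ together with \eqref{eq4.3} gives
\[
\limsup_{n\to\infty}\|u_n\|^{Q}_{HW^{1,Q}}\ \le\ \frac{c+\Theta\mu^{\vartheta}}{\boldsymbol{\kappa}}\ <\ \frac{c_0}{\boldsymbol{\kappa}}\ \le\ \alpha_Q^{\,Q-1}.
\]
Since $x\mapsto x^{1/(Q-1)}$ is continuous and strictly increasing on $[0,\infty)$, raising both sides to that power yields $\displaystyle\limsup_{n\to\infty}\|u_n\|^{Q^{\prime}}_{HW^{1,Q}}<\alpha_Q$, which is the first claim.

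For the second claim, set $\textbf{r}:=\frac{2Q}{2Q-2\beta-\lambda}\in(1,2)$. Because \eqref{eq3.3} holds for each fixed $u_n\in\mathbf{X}$, Theorem~\ref{thm2.9} applied with $\textbf{r}=\textbf{s}$ and $\alpha=\beta$ (the admissibility conditions there reduce precisely to $0<\lambda<Q$ and $2\beta+\lambda<Q$, which are assumed) gives
\[
\left|\int_{\mathbb{H}^N}\left(\int_{\mathbb{H}^N}\frac{F(\eta,u_n)}{r(\eta)^{\beta}d_K(\xi,\eta)^{\lambda}}\,\mathrm{d}\eta\right)\frac{f(\xi,u_n)\,u_n}{r(\xi)^{\beta}}\,\mathrm{d}\xi\right|\ \le\ C\,\|F(\cdot,u_n)\|_{\textbf{r}}\,\|f(\cdot,u_n)\,u_n\|_{\textbf{r}},
\]
so it suffices to bound $\|F(\cdot,u_n)\|_{\textbf{r}}$ and $\|f(\cdot,u_n)u_n\|_{\textbf{r}}$ uniformly for $n$ large.

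To obtain those $L^{\textbf r}$-bounds I would take $\nu=Q+1$, so that $(f1)$, $(f3)$ and \eqref{eq3.2} give $|f(\xi,u_n)u_n|\le\varepsilon|u_n|^{Q}+\widetilde{C}_\varepsilon|u_n|^{\nu}\Phi(\alpha_0|u_n|^{Q^{\prime}})$ and the analogous bound for $F(\cdot,u_n)$; raising to the power $\textbf{r}$, replacing $\Phi(\alpha_0|u_n|^{Q^{\prime}})^{\textbf{r}}$ by $\Phi(\textbf{r}\alpha_0|u_n|^{Q^{\prime}})$ via Corollary~\ref{cor2.5}, splitting the resulting polynomial$\,\times\,$exponential term with H\"older's inequality for a conjugate pair $(r,r')$, and handling the polynomial factors with Corollary~\ref{cor2.3} and the uniform boundedness of $\{u_n\}_n$ in $\mathbf{X}$ — this is verbatim the computation behind \eqref{eq3.24} — reduces everything to the uniform control of
\[
\int_{\mathbb{H}^N}\Phi\!\left(\textbf{r}\,r'\,\alpha\,\|u_n\|^{Q^{\prime}}_{HW^{1,Q}}\,|\widetilde u_n|^{Q^{\prime}}\right)\mathrm{d}\xi,\qquad\widetilde u_n:=\frac{u_n}{\|u_n\|_{HW^{1,Q}}},\quad\|\widetilde u_n\|_{HW^{1,Q}}=1.
\]
By the first part there are $n_0\in\mathbb{N}$ and $m<\alpha_Q$ with $\|u_n\|^{Q^{\prime}}_{HW^{1,Q}}<m$ for all $n\ge n_0$; since $\alpha>\alpha_0$, choosing $\alpha$ close to $\alpha_0$ (this $\alpha$ now playing the role of $\alpha_0$ in \eqref{eq3.2}) and $r'>1$ close to $1$ one still has $\textbf{r}\,r'\,\alpha\,\|u_n\|^{Q^{\prime}}_{HW^{1,Q}}<m<\alpha_Q$ for all $n\ge n_0$, whence Theorem~\ref{thm2.7} (with its singular exponent equal to $0$ and $\tau=1$) bounds the integral uniformly for $n\ge n_0$. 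Feeding this back into the H\"older splitting and then into the Stein--Weiss estimate of the previous paragraph yields the asserted uniform bound on the Choquard integral.

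All inequalities above are routine and patterned on Section~\ref{sec3}; the single delicate point is the calibration in the last step, where the factors $\textbf{r}$, $r'$ and the passage from $\alpha_0$ to $\alpha$ must be absorbed while keeping the exponential argument strictly below $\alpha_Q$ — and it is exactly here that the strict inequality $\limsup_n\|u_n\|^{Q^{\prime}}_{HW^{1,Q}}<\alpha_Q$, i.e.\ the threshold \eqref{eq4.3}, is indispensable. No compactness of Sobolev embeddings enters this corollary; only their continuous version and the Trudinger--Moser inequality are used.
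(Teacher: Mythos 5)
Your proposal is correct and follows exactly the route the paper intends: the first assertion is read off from \eqref{eq4.444} of Lemma \ref{lem4.1} together with $c+\Theta\mu^{\vartheta}<c_0\leq\boldsymbol{\kappa}\alpha_Q^{Q-1}$ and the monotonicity of $x\mapsto x^{1/(Q-1)}$, while the second reproduces the Stein--Weiss plus Trudinger--Moser estimate of \eqref{eq3.24} (used again in \eqref{eq4.30}) to bound $\|F(\cdot,u_n)\|_{\boldsymbol{t}}$ and $\|f(\cdot,u_n)u_n\|_{\boldsymbol{t}}$ uniformly for $n\geq n_0$ via Theorem \ref{thm2.9} and Theorem \ref{thm2.7}. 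The calibration point you single out (absorbing the factors $\tfrac{2Q}{2Q-2\beta-\lambda}$, $r'$ and $\alpha>\alpha_0$ while keeping the exponential argument below $\alpha_Q$) is treated exactly as the paper itself treats it in Proposition \ref{prop3.5}, so your argument coincides with the paper's, which simply records the corollary as a direct consequence of Lemma \ref{lem4.1}.
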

Following in Br\'ezis-Lieb's footsteps, the next lemma is dedicated to doubly weighted Choquard nonlinearity.
\begin{proposition}\label{prop4.3}    
 Let $\{u_n\}_n\subset \mathbf{X} $ be a sequence and $u$ be in $\mathbf{X}$ such that $u_n\rightharpoonup u$ in $\mathbf{X}$, $u_n\to u$ a.e. in $\mathbb{H}^N$ and $D_H u_n\to D_H u$ a.e. in $\mathbb{H}^N$ as $n\to\infty$. If
 \begin{equation}\label{eq4.4}
 \limsup_{n\to\infty}\|u_n\|^{Q^\prime}_{HW^{1,Q}}<\frac{\alpha_Q}{2^{Q^\prime}}
\end{equation}
is satisfied, then for $\boldsymbol{t}=\frac{2Q}{2Q-2\beta-\lambda}$, we have as $n\to\infty$
\begin{itemize}
   
    \item [(i)] $$\int_{\mathbb{H}^N}\big|F(\xi,u_n)-F(\xi,u_n-u)-F(\xi,u)\big|^{\boldsymbol{t}}~\mathrm{d}\xi=o_n(1);$$
     \item [(ii)] $$\int_{\mathbb{H}^N}\big|f(\xi,u_n)u_n-f(\xi,u_n-u)(u_n-u)-f(\xi,u)u\big|^{\boldsymbol{t}}~\mathrm{d}\xi=o_n(1).$$
\end{itemize}
Furthermore, if we define
    $$ \mathcal{G}(u)=\displaystyle\int_{\mathbb{H}^N}\Bigg(\displaystyle\int_{\mathbb{H}^N}\frac{F(\eta,u)}{r(\eta)^\beta {d_K(\xi,\eta)}^\lambda}~\mathrm{d}\eta\Bigg)\frac{f(\xi,u)}{r(\xi)^\beta} u~\mathrm{d}\xi~~\text{and }~~\mathcal{T}(u)=\displaystyle\int_{\mathbb{H}^N}\Bigg(\displaystyle\int_{\mathbb{H}^N}\frac{F(\eta,u)}{r(\eta)^\beta {d_K(\xi,\eta)}^\lambda}~\mathrm{d}\eta\Bigg)\frac{F(\xi,u)}{r(\xi)^\beta}~\mathrm{d}\xi, $$
    then the following results hold as $n\to\infty$
   \begin{itemize}
       \item [(a)] $$\mathcal{G}(u_n)-\mathcal{G}(u_n-u)-\mathcal{G}(u)=o_n(1);$$
       \item [(b)] $$\mathcal{T}(u_n)-\mathcal{T}(u_n-u)-\mathcal{T}(u)=o_n(1) .$$
   \end{itemize}
\end{proposition}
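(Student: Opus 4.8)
The plan is to establish (i) and (ii) first — these are the genuinely new Br\'ezis--Lieb type ingredients — and then to deduce (a) and (b) by expanding the bilinear doubly weighted Choquard form and using the boundedness and weak continuity furnished by Theorem \ref{thm2.9}. As a set-up I would write $v_n:=u_n-u$, so that $v_n\rightharpoonup 0$ in $\mathbf{X}$ and $v_n\to 0$, $D_Hv_n\to 0$ a.e. in $\mathbb{H}^N$. Since $u_n\to u$, $D_Hu_n\to D_Hu$ a.e. and $\{u_n\}_n$, $\{D_Hu_n\}_n$ are bounded in $L^Q(\mathbb{H}^N)$, the classical Br\'ezis--Lieb lemma for the $L^Q$-norms gives $\|v_n\|^{Q}_{HW^{1,Q}}=\|u_n\|^{Q}_{HW^{1,Q}}-\|u\|^{Q}_{HW^{1,Q}}+o_n(1)$; together with \eqref{eq4.4} this produces $N_0\in\mathbb{N}$ and a number $\mathfrak{m}\in(0,2^{-Q^\prime}\alpha_Q)$ with $\|u_n\|^{Q^\prime}_{HW^{1,Q}}\le\mathfrak{m}$ and $\|v_n\|^{Q^\prime}_{HW^{1,Q}}\le\mathfrak{m}$ for all $n\ge N_0$. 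Combining \eqref{eq3.2}, Corollary \ref{cor2.5} and Theorem \ref{thm2.7} (with $\tau=1$, $\beta=0$) — the factor $2^{Q^\prime}$ in \eqref{eq4.4} being exactly what keeps the ensuing exponential integrals uniformly finite — I then get that $\{F(\cdot,u_n)\}_n$, $\{F(\cdot,v_n)\}_n$, $\{f(\cdot,u_n)u_n\}_n$, $\{f(\cdot,v_n)v_n\}_n$ are bounded in $L^{\boldsymbol{t}}(\mathbb{H}^N)$, and — using also $F(\cdot,v_n)\to 0$, $f(\cdot,v_n)v_n\to 0$ a.e. — that $F(\cdot,v_n)\rightharpoonup 0$ and $f(\cdot,v_n)v_n\rightharpoonup 0$ in $L^{\boldsymbol{t}}(\mathbb{H}^N)$.

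For (i) and (ii) I would use the fundamental theorem of calculus to write $F(\xi,u_n)-F(\xi,v_n)-F(\xi,u)=\int_0^1[f(\xi,v_n+tu)-f(\xi,tu)]\,u\,\mathrm{d}t$, and, for (ii), the analogous identity for $\tau\mapsto f(\xi,\tau)\tau$ (its $\tau$-derivative controlled via $(f1)$ and $(f3)$). Both integrands tend to $0$ a.e. (continuity of $f,\partial_uf$ and dominated convergence in $t$, since $v_n\to 0$ a.e.), so it remains to upgrade this to convergence in $L^{\boldsymbol{t}}$, which I would do via Vitali's convergence theorem. The crucial point is that each representation carries the fixed factor $|u|$ out front; estimating the remaining integrand by \eqref{eq3.2}, the convexity inequality $(a+b)^{Q^\prime}\le(1+\eta)a^{Q^\prime}+C_\eta b^{Q^\prime}$, a Young-type inequality for products of exponentials, and Corollary \ref{cor2.5} (to absorb the $\boldsymbol{t}$-th power inside $\Phi$), one dominates the $\boldsymbol{t}$-th power of the integrand of (i) (resp.\ of (ii)), for every $\varepsilon>0$, by $\varepsilon\,\omega_n+C_\varepsilon\,\Upsilon$, where $\Upsilon\in L^1(\mathbb{H}^N)$ depends only on $u$ (assembled from $|u|^{Q\boldsymbol{t}}$ and $|u|^{\boldsymbol{t}}\Phi(c|u|^{Q^\prime})$ through Theorem \ref{thm2.6} and H\"older), and $\{\omega_n\}_{n\ge N_0}$ is a finite sum of terms of the shape $|u|^{\boldsymbol{t}}|u_n|^{a}\Phi(b|u_n|^{Q^\prime})$ that is uniformly bounded, equi-integrable and tight in $L^1(\mathbb{H}^N)$ — here \eqref{eq4.4} is used again, since (after choosing $\eta$, the H\"older exponents close to $1$ and $\alpha>\alpha_0$ close to $\alpha_0$) it forces $b\,\|u_n\|^{Q^\prime}_{HW^{1,Q}}<\alpha_Q$, so Theorem \ref{thm2.7} yields the uniform exponential bounds, while the fixed prefactor $|u|^{\boldsymbol{t}}\in L^1$ gives tightness by absolute continuity of $\int|u|^{\boldsymbol{t}}$. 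Letting $\varepsilon\to0$ shows the $\boldsymbol{t}$-th powers in (i)--(ii) form an equi-integrable, tight family converging to $0$ a.e., and Vitali's theorem finishes these items. I expect this $L^{\boldsymbol{t}}$-domination — keeping $\Phi(\cdot\,|u_n|^{Q^\prime})$ uniformly integrable in the $L^{\boldsymbol{t}}$-scale while building a tight majorant — to be the main obstacle.

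Finally, for (a) and (b) I set, for $\phi,\psi\in L^{\boldsymbol{t}}(\mathbb{H}^N)$, $\mathcal{B}(\phi,\psi):=\int_{\mathbb{H}^N}\int_{\mathbb{H}^N}\frac{\phi(\eta)\psi(\xi)}{r(\eta)^\beta d_K(\xi,\eta)^\lambda r(\xi)^\beta}\,\mathrm{d}\eta\,\mathrm{d}\xi$; by Theorem \ref{thm2.9} (with $\mathbf{r}=\mathbf{s}$, $\alpha=\beta$) $\mathcal{B}$ is a bounded symmetric bilinear form, and the associated operator $\Lambda\phi(\xi):=\int_{\mathbb{H}^N}\frac{\phi(\eta)}{r(\xi)^\beta d_K(\xi,\eta)^\lambda r(\eta)^\beta}\,\mathrm{d}\eta$ maps $L^{\boldsymbol{t}}(\mathbb{H}^N)$ boundedly — hence weak-to-weakly continuously — into $L^{\frac{2Q}{2\beta+\lambda}}(\mathbb{H}^N)$ (the operator of \eqref{eq3.19}), so that $\mathcal{B}(\phi,\psi)=\int_{\mathbb{H}^N}\Lambda\phi\,\psi\,\mathrm{d}\xi$. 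Since $\mathcal{G}(u)=\mathcal{B}(F(\cdot,u),f(\cdot,u)u)$ and $\mathcal{T}(u)=\mathcal{B}(F(\cdot,u),F(\cdot,u))$, I would write $F(\cdot,u_n)=F(\cdot,v_n)+F(\cdot,u)+A_n$ and $f(\cdot,u_n)u_n=f(\cdot,v_n)v_n+f(\cdot,u)u+E_n$ with $\|A_n\|_{\boldsymbol{t}},\|E_n\|_{\boldsymbol{t}}\to 0$ by (i)--(ii), expand $\mathcal{G}(u_n)-\mathcal{G}(v_n)-\mathcal{G}(u)$ and $\mathcal{T}(u_n)-\mathcal{T}(v_n)-\mathcal{T}(u)$ by bilinearity, and check that each resulting term is $o_n(1)$: those containing a factor $A_n$ or $E_n$ because $\mathcal{B}$ is bounded and the companion argument is bounded in $L^{\boldsymbol{t}}$; the remaining cross terms, being of the form $\int_{\mathbb{H}^N}\Lambda(F(\cdot,v_n))\,\chi\,\mathrm{d}\xi$ or $\int_{\mathbb{H}^N}\Lambda(\chi)\,f(\cdot,v_n)v_n\,\mathrm{d}\xi$ with $\chi\in\{F(\cdot,u),f(\cdot,u)u\}$ fixed, because $\Lambda(F(\cdot,v_n))\rightharpoonup 0$ and $f(\cdot,v_n)v_n\rightharpoonup 0$ while $\chi$ and $\Lambda(\chi)$ are fixed elements of the relevant spaces. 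This yields (a) and (b) and completes the proof.
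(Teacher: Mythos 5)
Your proposal is correct and follows essentially the same route as the paper: for (i)--(ii) the paper likewise writes the difference via the fundamental theorem of calculus (for $F$ and for $\tau\mapsto f(\xi,\tau)\tau$, using $(f1)$, $(f3)$ and Corollary \ref{cor2.5}), keeps the fixed factor $|u|$ in the majorant, and invokes \eqref{eq4.4} with the Br\'ezis--Lieb splitting of $\|v_n\|_{HW^{1,Q}}$ plus Theorem \ref{thm2.7} to get uniform integrability and tightness before applying Vitali's theorem; for (a)--(b) the paper's five-term expansion \eqref{eq4.10} is exactly your bilinear decomposition, with the $A_n$, $E_n$ terms controlled by Theorem \ref{thm2.9} and $L^{\boldsymbol{t}}$-boundedness, and the cross terms killed by $F(\cdot,v_n)\rightharpoonup 0$, $f(\cdot,v_n)v_n\rightharpoonup 0$ in $L^{\boldsymbol{t}}$ together with the boundedness of the convolution operator into $L^{\frac{2Q}{2\beta+\lambda}}$.
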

\begin{proof}
Set $v_n=u_n-u$ and $H_n(\cdot)=|F(\cdot,v_n+u)-F(\cdot,v_n)-F(\cdot,u)|^{\boldsymbol{t}}$. By using $(f1)$ and Corollary \ref{cor2.5}, we obtain
\begin{align*}
    |F(\xi,v_n+u)-F(\xi,v_n)|&=\bigg|\int_{0}^{1}\frac{\mathrm{d}}{\mathrm{d}s}\big(F(\xi,v_n+su)\big)~\mathrm{d}s\bigg|\leq \int_{0}^{1} |f(\xi,v_n+su)||u|~\mathrm{d}s\\
    &\leq \int_{0}^{1}|u|\Big( \varepsilon  |v_n+su|^{Q-1}+C_\varepsilon  \Phi(\alpha_0 |v_n+su|^{Q^\prime})\Big)~\mathrm{d}s\\
    & \leq  2^{Q-2} \varepsilon \int_{0}^{1} \Big(|v_n|^{Q-1}+|su|^{Q-1}\Big)|u|~\mathrm{d}s+C_\varepsilon \int_{0}^{1}|u|\Phi\Big(\alpha_0 \big(|v_n|+s|u|\big)^{Q^\prime}\Big)~\mathrm{d}s\\
    & \leq  2^{Q-2} \varepsilon \Big[|v_n|^{Q-1}|u|+|u|^Q\Big]+C_\varepsilon |u|\Phi\Big(\alpha_0 \big(|v_n|+|u|\big)^{Q^\prime}\Big).
\end{align*}
It follows that
 $$|F(\xi,v_n+u)-F(\xi,v_n)-F(\xi,u)|\leq  2^{Q-2} \varepsilon |v_n|^{Q-1}|u|+\big(2^{Q-2}+1 \big)\varepsilon |u|^Q+C_\varepsilon |u|\big[\Phi\big(\alpha_0 \big(|v_n|+|u|\big)^{Q^\prime}\big)+ \Phi\big(\alpha_0 |u|^{Q^\prime}\big)\big].$$
This together with Corollary \ref{cor2.5} implies that there exists a constant $C>0$ such that    
\begin{align}\label{eq4.5}
 H_n(\xi)\leq C \big[|v_n|^{(Q-1)\boldsymbol{t}}|u|^{\boldsymbol{t}}+|u|^{Q\boldsymbol{t}}+|u|^{\boldsymbol{t}}\big(\Phi\big(\alpha_0 \boldsymbol{t}\big(|v_n|+|u|\big)^{Q^\prime}\big)+ \Phi\big(\alpha_0 \boldsymbol{t}|u|^{Q^\prime}\big)\big)\big].   
\end{align} 
Notice that $u_n\rightharpoonup u$ in $HW^{1,Q}(\mathbb{H}^N)$ as $n\to\infty$ and thus $\{u_n\}_n$ is bounded in $HW^{1,Q}(\mathbb{H}^N)$. It follows from Br\'ezis-Lieb lemma that
$$\|v_n\|^Q_{HW^{1,Q}}=\|u_n\|^Q_{HW^{1,Q}}-\|u\|^Q_{HW^{1,Q}}+o_n(1)\leq \|u_n\|^Q_{HW^{1,Q}}+o_n(1)~\text{as}~n\to\infty. $$
Consequently, by using \eqref{eq4.4}, we have
\begin{equation}\label{eq4.6}
  \limsup_{n\to\infty}\|v_n\|^{Q^\prime}_{HW^{1,Q}}<\frac{\alpha_Q}{2^{Q^\prime}}.   
\end{equation}
Due to weakly lower semicontinuity of the norm, we obtain
\begin{equation}\label{eq4.7}
 \|u\|^{Q^\prime}_{HW^{1,Q}}\leq \liminf_{n\to\infty}\|u_n\|^{Q^\prime}_{HW^{1,Q}}\leq \limsup_{n\to\infty}\|u_n\|^{Q^\prime}_{HW^{1,Q}}<\frac{\alpha_Q}{2^{Q^\prime}}. 
\end{equation}
Combining \eqref{eq4.6} and \eqref{eq4.7} together, we can easily deduce that
\begin{equation}\label{eq4.8}
  \limsup_{n\to\infty}\big\||v_n|+|u|\big\|^{Q^\prime}_{HW^{1,Q}}<\alpha_Q.   
\end{equation}
Taking into account \eqref{eq4.8} there exist $m>0$ and $n_0\in\mathbb{N}$ such that $\big\||v_n|+|u|\big\|^{Q^\prime}_{HW^{1,Q}}<m<\alpha_Q$ for all $n\ge n_0$. Suppose that $\zeta\geq Q$ with $\zeta^\prime=\frac{\zeta}{1-\zeta}$ satisfying $\frac{1}{\zeta}+\frac{1}{\zeta^\prime}=1$. Choose $\alpha>\alpha_0$ very close to $\alpha_0$ and $\zeta^\prime>1$ very close to 1 such that we still have  $\zeta^\prime\alpha \boldsymbol{t}\big\||v_n|+|u|\big\|^{Q^\prime}_{HW^{1,Q}}<m<\alpha_Q$ for all $n\ge n_0$. Denote $\widetilde{v}_n=|v_n|+|u|\bigg/\big\||v_n|+|u|\big\|_{HW^{1,Q}}$, then by using the H\"older's inequality, Corollary \ref{cor2.3} and Corollary \ref{cor2.5}, it follows from  \eqref{eq4.5} that for $n\geq n_0$, we have
\begin{align*}\int_{\mathbb{H}^N}H_n(\xi)~\mathrm{d}\xi &\leq C\Bigg[ \|v_n\|^{(Q-1)\boldsymbol{t}}_{Q\boldsymbol{t}}\|u\|_{Q\boldsymbol{t}}+\|u\|^{Q\boldsymbol{t}}_{Q\boldsymbol{t}}+\|u\|^{\boldsymbol{t}}_{\boldsymbol{t}\zeta}\Bigg(\int_{\mathbb{H}^N}\Phi\big(\zeta^\prime\alpha\boldsymbol{t}\big\||v_n|+|u|\big\|^{Q^\prime}_{HW^{1,Q}}|\widetilde{v}_n|^{N^\prime}\big)~\mathrm{d}\xi\Bigg)^{\frac{1}{\zeta^\prime}}\\
&\qquad~~+\|u\|^{\boldsymbol{t}}_{\boldsymbol{t}\zeta}\Bigg(\int_{\mathbb{H}^N}\Phi\big(\zeta^\prime\alpha \boldsymbol{t}|u|^{Q^\prime}\big)~\mathrm{d}\xi\Bigg)^{\frac{1}{\zeta^\prime}}\Bigg].
\end{align*}
This yields that for all $n\geq n_0$, we have
\begin{equation}\label{eq4.9}
 \int_{\mathbb{H}^N}H_n(\xi)~\mathrm{d}\xi\leq C_1\Bigg[\bigg(\int_{\mathbb{H}^N}|u|^{Q\boldsymbol{t}}~\mathrm{d}\xi\bigg)^{\frac{1}{Q\boldsymbol{t}}}+\int_{\mathbb{H}^N}|u|^{Q\boldsymbol{t}}~\mathrm{d}\xi+\bigg(\int_{\mathbb{H}^N}|u|^{\boldsymbol{t}\zeta}~\mathrm{d}\xi\bigg)^{\frac{1}{\zeta}}\Bigg] <\infty,  
\end{equation}
where \begin{align*}C_1&= C ~\text{max}\Bigg\{\sup_{n\in\mathbb{N}}\|v_n\|^{(Q-1)\boldsymbol{t}}_{Q\boldsymbol{t}},1, \Bigg(\sup_{n\geq n_0}\int_{\mathbb{H}^N}\Phi\big(\zeta^\prime\alpha\boldsymbol{t}\big\||v_n|+|u|\big\|^{Q^\prime}_{HW^{1,Q}}|\widetilde{v}_n|^{N^\prime}\big)~\mathrm{d}\xi\Bigg)^{\frac{1}{\zeta^\prime}}+\Bigg(\int_{\mathbb{H}^N}\Phi\big(\zeta^\prime\alpha \boldsymbol{t}|u|^{Q^\prime}\big)~\mathrm{d}\xi\Bigg)^{\frac{1}{\zeta^\prime}}\Bigg\},\end{align*}
which is finite, thanks to Theorem \ref{thm2.7} and the fact that $\{v_n\}_n$ is uniformly bounded in $\mathbf{X}$. It follows that $\{H_n(\cdot)\}_{n\geq n_0}$ is bounded in $L^1(\mathbb{H}^N)$. Moreover, because of \eqref{eq4.9}, it is not difficult to verify that $\{H_n(\cdot)\}_{n\geq n_0}$ is uniformly absolutely integrable and tight over $\mathbb{H}^N$. Further, since $u_n\to u$ a.e. in $\mathbb{H}^N$ as $n\to\infty$, therefore $H_n(\xi)\to 0$ a.e. in $\mathbb{H}^N$ as $n\to\infty$. Consequently, the conclusion of Proposition \ref{prop4.3}-$(i)$ follows immediately by applying the Vitali's theorem. Furthermore, by using $(f1),(f3)$ and Corollary \ref{cor2.5}, one can easily see that for any $\vartheta_1,\vartheta_2\in\mathbb{R}$
\begin{align*}|f(\xi,\vartheta_1+\vartheta_2)(\vartheta_1+\vartheta_2)-f(\xi,\vartheta_1)\vartheta_1|&=\bigg|\int_{0}^{1}\frac{\mathrm{d}}{\mathrm{d}s}\big(f(\xi,\vartheta_1+s\vartheta_2)(\vartheta_1+s\vartheta_2)\big)~\mathrm{d}s\bigg|\\
&=\bigg|\int_{0}^{1}\big(\partial_u f(\xi,\vartheta_1+s\vartheta_2)\vartheta_2(\vartheta_1+s\vartheta_2)+f(\xi,\vartheta_1+s\vartheta_2)\vartheta_2\big)~\mathrm{d}s\bigg| \\
&\leq 2 \int_{0}^{1}\Big[\varepsilon|\vartheta_1+s\vartheta_2|^{Q-1}|\vartheta_2|+C_\varepsilon|\vartheta_2|\Phi(\alpha_0|\vartheta_1+s\vartheta_2|^{Q^\prime})\Big]~\mathrm{d}s\\
&\leq 2^{Q-1}\varepsilon\Big[|\vartheta_1|^{Q-1}|\vartheta_2|+|\vartheta_2|^{Q}\Big]+2C_\varepsilon|\vartheta_2|\Phi\big(\alpha_0(|\vartheta_1|+|\vartheta_2|)^{Q^\prime}\big).\end{align*}
Denote $\vartheta_1=u_n-u,\vartheta_2=u$, $v_n=u_n-u$ and $h_n(\cdot)=|f(\cdot,v_n+u)(v_n+u)-f(\cdot,v_n)v_n-f(\cdot,u)u|^{\boldsymbol{t}}$. Following a similar idea as in the proof of Proposition \ref{prop4.3}-$(i)$, we can easily verify the validity of Proposition \ref{prop4.3}-$(ii)$. Observe that
\begin{align}\label{eq4.10}
 \mathcal{G}(u_n)-\mathcal{G}(u_n-u)-\mathcal{G}(u)&=\displaystyle\int_{\mathbb{H}^N}\Bigg(\displaystyle\int_{\mathbb{H}^N}\frac{\big(F(\eta,u_n)-F(\eta,u_n-u)-F(\eta,u)\big)}{r(\eta)^\beta {d_K(\xi,\eta)}^\lambda}~\mathrm{d}\eta\Bigg)\frac{f(\xi,u_n)u_n}{r(\xi)^\beta} ~\mathrm{d}\xi \notag\\
 &\quad+\displaystyle\int_{\mathbb{H}^N}\Bigg(\displaystyle\int_{\mathbb{H}^N}\frac{F(\eta,u_n-u)}{r(\eta)^\beta {d_K(\xi,\eta)}^\lambda}~\mathrm{d}\eta\Bigg)\frac{\big(f(\xi,u_n)u_n-f(\xi,u_n-u)(u_n-u)-f(\xi,u)u\big)}{r(\xi)^\beta}~\mathrm{d}\xi \notag\\
 & \quad+\displaystyle\int_{\mathbb{H}^N}\Bigg(\displaystyle\int_{\mathbb{H}^N}\frac{F(\eta,u)}{r(\eta)^\beta {d_K(\xi,\eta)}^\lambda}~\mathrm{d}\eta\Bigg)\frac{\big(f(\xi,u_n)u_n-f(\xi,u_n-u)(u_n-u)-f(\xi,u)u\big)}{r(\xi)^\beta}~\mathrm{d}\xi \notag\\
 & \quad+\displaystyle\int_{\mathbb{H}^N}\Bigg(\displaystyle\int_{\mathbb{H}^N}\frac{F(\eta,u)}{r(\eta)^\beta {d_K(\xi,\eta)}^\lambda}~\mathrm{d}\eta\Bigg)\frac{f(\xi,u_n-u)(u_n-u)}{r(\xi)^\beta}~\mathrm{d}\xi \notag\\
 &\quad+\displaystyle\int_{\mathbb{H}^N}\Bigg(\displaystyle\int_{\mathbb{H}^N}\frac{F(\eta,u_n-u)}{r(\eta)^\beta {d_K(\xi,\eta)}^\lambda}~\mathrm{d}\eta\Bigg)\frac{f(\xi,u)u}{r(\xi)^\beta}~\mathrm{d}\xi.
\end{align}
Using the growth assumptions of $f$ and $F$, \eqref{eq4.4} and the boundedness of $\{u_n\}_n$ in $\mathbf{X}$, we can conclude that  $\{F(\cdot,u_n)\}_n,~\{F(\cdot,u_n-u)\}_n,~\{f(\cdot,u_n)u_n\}_n,$ and $\{f(\cdot,u_n-u)(u_n-u)\}_n$ are bounded in $L^{\boldsymbol{t}}(\mathbb{H}^N)$, where $\boldsymbol{t}=\frac{2Q}{2Q-2\beta-\lambda}$. In view of Theorem \ref{thm2.9} and Proposition \ref{prop4.3}-$(i)$, we have
\begin{align*}
\Bigg|\int_{\mathbb{H}^N}\Bigg(\displaystyle\int_{\mathbb{H}^N}\frac{\big(F(\eta,u_n)-F(\eta,u_n-u)-F(\eta,u)\big)}{r(\eta)^\beta {d_K(\xi,\eta)}^\lambda}\mathrm{d}\eta\Bigg)\frac{f(\xi,u_n)u_n}{r(\xi)^\beta} \mathrm{d}\xi\Bigg|&\leq \|F(\cdot,u_n)-F(\cdot,u_n-u)-F(\cdot,u)\|_{\boldsymbol{t}}\|f(\cdot,u_n)u_n\|_{\boldsymbol{t}} \\
&\to 0~\text{as}~n\to\infty.
\end{align*}
It follows that 
\begin{equation}\label{eq4.11}
\lim_{n\to\infty} \int_{\mathbb{H}^N}\Bigg(\displaystyle\int_{\mathbb{H}^N}\frac{\big(F(\eta,u_n)-F(\eta,u_n-u)-F(\eta,u)\big)}{r(\eta)^\beta {d_K(\xi,\eta)}^\lambda}~\mathrm{d}\eta\Bigg)\frac{f(\xi,u_n)u_n}{r(\xi)^\beta}~ \mathrm{d}\xi=0.    
\end{equation}
Similarly, by using Theorem \ref{thm2.9} and Proposition \ref{prop4.3}-$(ii)$, we obtain
\begin{equation}\label{eq4.12}
    \lim_{n\to\infty} \displaystyle\int_{\mathbb{H}^N}\Bigg(\displaystyle\int_{\mathbb{H}^N}\frac{F(\eta,u_n-u)}{r(\eta)^\beta {d_K(\xi,\eta)}^\lambda}~\mathrm{d}\eta\Bigg)\frac{\big(f(\xi,u_n)u_n-f(\xi,u_n-u)(u_n-u)-f(\xi,u)u\big)}{r(\xi)^\beta}~\mathrm{d}\xi =0
\end{equation}
and 
\begin{equation}\label{eq4.13}
     \lim_{n\to\infty} \displaystyle\int_{\mathbb{H}^N}\Bigg(\displaystyle\int_{\mathbb{H}^N}\frac{F(\eta,u)}{r(\eta)^\beta {d_K(\xi,\eta)}^\lambda}~\mathrm{d}\eta\Bigg)\frac{\big(f(\xi,u_n)u_n-f(\xi,u_n-u)(u_n-u)-f(\xi,u)u\big)}{r(\xi)^\beta}~\mathrm{d}\xi=0.
\end{equation}
By the hypothesis, we have that $f(\xi,u_n-u)(u_n-u)\to 0$  a.e. in $\mathbb{H}^N$ and $F(\xi,u_n-u)\to 0$  a.e. in $\mathbb{H}^N$ as $n\to\infty$. It follows that $f(\xi,u_n-u)(u_n-u)\rightharpoonup 0$  and $F(\xi,u_n-u)\rightharpoonup 0$ in $L^{\frac{2Q}{2Q-2\beta-\lambda}}(\mathbb{H}^N)$ as $n\to\infty$. Consequently, due to Theorem \ref{thm2.9}, we also have
$$ \displaystyle \int_{\mathbb{H}^N} \frac{F(\eta,u)}{r(\xi)^{\beta }~{d_K(\xi,\eta)}^\lambda r(\eta)^{\beta}}~\mathrm{d}\eta\in L^{\frac{2Q}{2\beta+\lambda}}(\mathbb{H}^N).$$
This shows that
\begin{equation}\label{eq4.14}
 \lim_{n\to\infty} \displaystyle\int_{\mathbb{H}^N}\Bigg(\displaystyle\int_{\mathbb{H}^N}\frac{F(\eta,u)}{r(\eta)^\beta {d_K(\xi,\eta)}^\lambda}~\mathrm{d}\eta\Bigg)\frac{f(\xi,u_n-u)(u_n-u)}{r(\xi)^\beta}~\mathrm{d}\xi=0.   
\end{equation}
Once more by Theorem \ref{thm2.9}, we can notice that
$$ \displaystyle \int_{\mathbb{H}^N} \frac{F(\eta,u_n-u)}{r(\xi)^{\beta }~{d_K(\xi,\eta)}^\lambda r(\eta)^{\beta}}~\mathrm{d}\eta\rightharpoonup 0~~\text{in}~~ L^{\frac{2Q}{2\beta+\lambda}}(\mathbb{H}^N)~\text{as}~n\to\infty. $$
This together with the fact that $f(\cdot,u)u\in L^{\frac{2Q}{2Q-2\beta-\lambda}}(\mathbb{H}^N)$, we obtain
\begin{equation}\label{eq4.15}
 \lim_{n\to\infty} \displaystyle\int_{\mathbb{H}^N}\Bigg(\displaystyle\int_{\mathbb{H}^N}\frac{F(\eta,u_n-u)}{r(\eta)^\beta {d_K(\xi,\eta)}^\lambda}~\mathrm{d}\eta\Bigg)\frac{f(\xi,u)u}{r(\xi)^\beta}~\mathrm{d}\xi=0.   
\end{equation}
Letting $n\to\infty$ in \eqref{eq4.10} and using the convergence results \eqref{eq4.11}--\eqref{eq4.15}, we obtain the validity of Proposition \ref{prop4.3}-$(a)$. In a similar way, we can prove Proposition \ref{prop4.3}-$(b)$. This completes the proof. 
\end{proof}
For $\upsilon$ defined as in $(f4)$ and $e\in\mathbf{X}$ as stated in Lemma \ref{lem3.3}, we define
 $$ M(e)= \displaystyle\int_{\mathbb{H}^N}\Bigg(\displaystyle\int_{\mathbb{H}^N}\frac{e(\eta)^\upsilon}{r(\eta)^\beta {d_K(\xi,\eta)}^\lambda}~\mathrm{d}\eta\Bigg)\frac{e(\xi)^\upsilon}{r(\xi)^\beta}~\mathrm{d}\xi. $$
 Due to Corollary \ref{cor2.3}, we can conclude that $e(\cdot)\in L^{\boldsymbol{\tau}\upsilon}(\mathbb{H}^N)$ for any $\boldsymbol{\tau}\in(Q,\infty)$. This fact together with Theorem \ref{thm2.9} implies at once that $M(e)$ is well-defined. Further, choose $\nu\geq 2Q^2 $ in \eqref{eq3.2} such that $\nu p> 2Q\upsilon$ holds. Let $\zeta,\zeta^\prime>1$ satisfying $\frac{1}{\zeta}+\frac{1}{\zeta^\prime}=1$, $\widehat{C}$ is defined as in $(f4)$ and $\mathcal{S}_{\frac{2Q\zeta\nu}{2Q-2\beta-\lambda},Q}$ is the best constant in the embedding $HW^{1,Q}(\mathbb{H}^N)\hookrightarrow L^{\frac{2Q\zeta\nu}{2Q-2\beta-\lambda}}(\mathbb{H}^N)$. Now, we denote $\gamma^\ast$ as follows:
 \begin{align}\label{eq4.999}
 \gamma^\ast=\max\bigg\{1, \Gamma_1^{\frac{\nu-Q}{Q}}, \big(\Gamma_2\Gamma_3\big)^{\frac{(2\upsilon-p)(\nu-Q)}{p\nu-2\upsilon Q}}\bigg\},    
 \end{align}
 where $$ \Gamma_1 \Gamma_2= \bigg(\frac{\alpha_Q}{2^{Q^\prime}}\bigg)^{1-Q},~\Gamma_2 \boldsymbol{\kappa}= \bigg(\boldsymbol{\wp}\widetilde{C}_2 \mathcal{S}_{\frac{2Q\zeta\nu}{2Q-2\beta-\lambda},Q}^{-\nu}\bigg)^{\frac{Q}{\nu-Q}},~~(\text{ the constants $\boldsymbol{\wp},\widetilde{C}_2>0$ are specified in Lemma \ref{lem4.5}}) $$
 and $$ \Gamma_3=\displaystyle\sum_{t\in\{p,Q\}} \bigg(\frac{1}{t}-\frac{1}{2\upsilon}\bigg)\bigg(\frac{\upsilon}{2 \widehat{C}^{2}}\bigg)^{\frac{t}{2\upsilon-t}}\frac{\|e\|^{\frac{2\upsilon t}{2\upsilon-t}}_{HW^{1,t}}}{\big(M(e)\big)^{\frac{t}{2\upsilon-t}}}. $$
Notice that whenever $\gamma>\gamma^\ast$, then we have
$$ \Gamma_2 \gamma ^{\frac{Q}{\nu-Q}}< \frac{\gamma ^{\frac{p}{2\upsilon-p}}}{\Gamma_3}.$$
Fix $c_0$ in such a way that there holds
\begin{align}\label{eq4.17}
  \Gamma_2 \gamma ^{\frac{Q}{\nu-Q}}<\frac{1}{c_0} <\frac{\gamma ^{\frac{p}{2\upsilon-p}}}{\Gamma_3}.  
\end{align}
This together with the definition of $\Gamma_2$ gives
\begin{align}\label{eq4.18}
    c_0< \boldsymbol{\kappa} \bigg(\boldsymbol{\wp}\widetilde{C}_2\gamma \mathcal{S}_{\frac{2Q\zeta\nu}{2Q-2\beta-\lambda},Q}^{-\nu}\bigg)^{\frac{Q}{Q-\nu}}.
\end{align}
Moreover, from the definition of $\Gamma_1$ and using the fact that $\gamma>\gamma^\ast$, one can easily obtain that 
\begin{align}\label{eq4.19}
  \bigg(\boldsymbol{\wp}\widetilde{C}_2\gamma \mathcal{S}_{\frac{2Q\zeta\nu}{2Q-2\beta-\lambda},Q}^{-\nu}\bigg)^{\frac{Q}{Q-\nu}} < \bigg(\frac{\alpha_Q}{2^{Q^\prime}}\bigg)^{Q-1}.
\end{align}
In view of \eqref{eq4.18} and \eqref{eq4.19}, we have
\begin{equation}\label{eq4.21}
    c_0<\boldsymbol{\kappa}\min\bigg\{\bigg(\frac{\alpha_Q}{2^{Q^\prime}}\bigg)^{Q-1},\bigg(\boldsymbol{\wp}\widetilde{C}_2\gamma \mathcal{S}_{\frac{2Q\zeta\nu}{2Q-2\beta-\lambda},Q}^{-\nu}\bigg)^{\frac{Q}{Q-\nu}}\bigg\}=\boldsymbol{\kappa} \bigg(\boldsymbol{\wp}\widetilde{C}_2\gamma \mathcal{S}_{\frac{2Q\zeta\nu}{2Q-2\beta-\lambda},Q}^{-\nu}\bigg)^{\frac{Q}{Q-\nu}}.
\end{equation}
The following lemma ensures the existence of a $(PS)_c$ condition for $J$ at a suitable level $c$ depending upon $c_0$.
\begin{lemma}\label{lem4.5}
 Suppose $\mu\in(0,\mu^\ast]$, where $\mu^\ast$ as in Lemma \ref{lem3.2} and $\gamma>\gamma^\ast$, with $\gamma^\ast$ is  defined as in \eqref{eq4.999}. Further, assume that  $c<c_0-\Theta\mu^{\vartheta}$, where $c_0>0$ satisfying \eqref{eq4.21} and $\Theta$ as in Lemma \ref{lem4.1}. Then, every $(PS)_c$ sequence for $J$ in $\mathbf{X}$ converges strongly to a function $w_{\mu,\gamma}\in\mathbf{X}$, which is a nontrivial solution of \eqref{main problem} at level $c$.
\end{lemma}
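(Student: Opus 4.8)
The plan is to carry out a Palais--Smale analysis in the spirit of the proof of Theorem \ref{thm1.1}, with the extra --- and decisive --- step of upgrading weak convergence to strong convergence. Let $\{u_n\}_n\subset\mathbf{X}$ be a $(PS)_c$ sequence with $c<c_0-\Theta\mu^\vartheta$. By Lemma \ref{lem4.1} it is bounded in $\mathbf{X}$, so up to a subsequence $u_n\rightharpoonup w_{\mu,\gamma}$ in $\mathbf{X}$, $u_n\to w_{\mu,\gamma}$ a.e.\ in $\mathbb{H}^N$ and $u_n\to w_{\mu,\gamma}$ in $L^\theta(B_R)$ for every $R>0$, $\theta\in[1,\infty)$. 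From \eqref{eq4.444} together with $c+\Theta\mu^\vartheta<c_0<\boldsymbol{\kappa}\big(\alpha_Q/2^{Q^\prime}\big)^{Q-1}$ (which is part of \eqref{eq4.21}) one gets $\limsup_n\|u_n\|_{HW^{1,Q}}^{Q^\prime}<\alpha_Q/2^{Q^\prime}<\alpha_Q$, so \eqref{eq3.17} holds with room to spare; hence Proposition \ref{prop3.6} applies, giving $D_Hu_n\to D_Hw_{\mu,\gamma}$ a.e.\ in $\mathbb{H}^N$ and the weak convergences \eqref{eq3.333} with $u=w_{\mu,\gamma}$. Repeating verbatim the limiting argument of the proof of Theorem \ref{thm1.1} --- pass to the limit in $\langle J^\prime(u_n),v\rangle=o_n(1)$ for $v\in\mathcal{C}^\infty_c(\mathbb{H}^N)$ using Proposition \ref{prop3.5} for the Stein--Weiss term, \eqref{eq3.333} for the $(p,Q)$-operator terms and Lemma \ref{lem2.4} for the weight term, then extend to all $v\in\mathbf{X}$ by the mollifier/cut-off density argument --- shows that $w_{\mu,\gamma}$ is a nonnegative weak solution of \eqref{main problem}; in particular $\langle J^\prime(w_{\mu,\gamma}),w_{\mu,\gamma}\rangle=0$.

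For the strong convergence, set $v_n=u_n-w_{\mu,\gamma}$, so $v_n\rightharpoonup0$ in $\mathbf{X}$ and $v_n\to0$, $D_Hv_n\to0$ a.e.\ in $\mathbb{H}^N$. Combining the classical Br\'ezis--Lieb lemma for $\|\cdot\|^p_{HW^{1,p}}$ and $\|\cdot\|^Q_{HW^{1,Q}}$, the compactness $\mathbf{X}\hookrightarrow L^s_g(\mathbb{H}^N)$ of Lemma \ref{lem2.4}, and the Br\'ezis--Lieb type identities (a)--(b) of Proposition \ref{prop4.3} (applicable since \eqref{eq4.4} follows from the bound above and Br\'ezis--Lieb for $\|\cdot\|^Q_{HW^{1,Q}}$, cf.\ \eqref{eq4.6}--\eqref{eq4.8}), the relations $J(u_n)\to c$ and $\langle J^\prime(u_n),u_n\rangle\to0$ split --- the ``$w_{\mu,\gamma}$-pieces'' reassembling into $J(w_{\mu,\gamma})$ and $\langle J^\prime(w_{\mu,\gamma}),w_{\mu,\gamma}\rangle=0$ --- into
$$\frac1p\|v_n\|^p_{HW^{1,p}}+\frac1Q\|v_n\|^Q_{HW^{1,Q}}-\frac\gamma2\mathcal{T}(v_n)=c-J(w_{\mu,\gamma})+o_n(1),\qquad \|v_n\|^p_{HW^{1,p}}+\|v_n\|^Q_{HW^{1,Q}}-\gamma\mathcal{G}(v_n)=o_n(1).$$
Writing $D_n=\|v_n\|^p_{HW^{1,p}}+\|v_n\|^Q_{HW^{1,Q}}$ and using $(f2)$ through $\mathcal{G}(v_n)\ge\frac\sigma2\mathcal{T}(v_n)$ (legitimate since $F(\cdot,v_n),f(\cdot,v_n)v_n\ge0$) to eliminate $\mathcal{G}(v_n),\mathcal{T}(v_n)$, one obtains $\limsup_n D_n\le(c-J(w_{\mu,\gamma}))/(2\boldsymbol{\kappa})$; and testing $J^\prime(w_{\mu,\gamma})$ against $w_{\mu,\gamma}$ and using $(f2)$ with the same Young-inequality choice as in Lemma \ref{lem4.1} gives $J(w_{\mu,\gamma})\ge-\Theta\mu^\vartheta$, whence $c-J(w_{\mu,\gamma})<c_0$ and, by \eqref{eq4.21},
$$\limsup_{n\to\infty}D_n<\frac{c_0}{2\boldsymbol{\kappa}}<\frac12\Big(\boldsymbol{\wp}\,\widetilde{C}_2\,\gamma\,\mathcal{S}_{\frac{2Q\zeta\nu}{2Q-2\beta-\lambda},Q}^{-\nu}\Big)^{\frac{Q}{Q-\nu}}.$$

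It remains to prove $D_n\to0$. I would estimate $\gamma\mathcal{G}(v_n)$ from above by the doubly weighted HLS inequality (Theorem \ref{thm2.9}), $\mathcal{G}(v_n)\le C_{\beta,\lambda,N}\|F(\cdot,v_n)\|_{\boldsymbol{t}}\|f(\cdot,v_n)v_n\|_{\boldsymbol{t}}$ with $\boldsymbol{t}=\frac{2Q}{2Q-2\beta-\lambda}$, then insert the growth bounds \eqref{eq3.2} with $\nu$ fixed large enough that $\nu\ge2Q^2$ and $\nu p>2Q\upsilon$, peel the exponential factor off by H\"older with exponents $\zeta,\zeta^\prime$, and bound the exponential integrals by Theorem \ref{thm2.7} (which is allowed precisely because $\{v_n\}$ stays strictly below the Trudinger--Moser threshold by \eqref{eq4.6}--\eqref{eq4.8}), so that all polynomial norms involved are dominated via $HW^{1,Q}(\mathbb{H}^N)\hookrightarrow L^{\frac{2Q\zeta\nu}{2Q-2\beta-\lambda}}(\mathbb{H}^N)$ with the constant $\mathcal{S}_{\frac{2Q\zeta\nu}{2Q-2\beta-\lambda},Q}$; combining with the second identity above and $\|v_n\|^Q_{HW^{1,Q}}\le D_n$ (absorbing the lower-order contributions against the a priori smallness of $\limsup_n D_n$ just established), one should arrive at an inequality of the shape
$$D_n\le \boldsymbol{\wp}\,\widetilde{C}_2\,\gamma\,\mathcal{S}_{\frac{2Q\zeta\nu}{2Q-2\beta-\lambda},Q}^{-\nu}\,D_n^{\nu/Q}+o_n(1),$$
where $\boldsymbol{\wp},\widetilde{C}_2>0$ collect the HLS and Trudinger--Moser constants together with the constant $\widetilde{C}_\varepsilon$ from \eqref{eq3.2}. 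Since $\nu>Q$, if $\limsup_n D_n=\ell_0>0$ this would force $\ell_0\ge\big(\boldsymbol{\wp}\widetilde{C}_2\gamma\,\mathcal{S}_{\frac{2Q\zeta\nu}{2Q-2\beta-\lambda},Q}^{-\nu}\big)^{\frac{Q}{Q-\nu}}$, contradicting the previous display; hence $D_n\to0$, i.e.\ $u_n\to w_{\mu,\gamma}$ strongly in $\mathbf{X}$. Continuity of $J$ then yields $J(w_{\mu,\gamma})=c>0=J(0)$, so $w_{\mu,\gamma}\neq0$ is a nontrivial solution of \eqref{main problem} at level $c$.

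The hard part is the last displayed estimate. What makes it delicate is that $\gamma\mathcal{G}(v_n)$ is a nonlocal quantity evaluated on the ``escaping-mass'' part $v_n$ of the sequence (which converges only weakly, and only locally strongly, hence need not vanish), while the growth of $f$ and $F$ is critical-exponential rather than polynomial; one has to trade the exponential factor for a loss in the integrability exponent --- hence the factor $2^{-Q^\prime}$ in \eqref{eq4.4}, needed so that $v_n$, $w_{\mu,\gamma}$ and $|v_n|+|w_{\mu,\gamma}|$ all remain below the Trudinger--Moser threshold --- and must calibrate the constants $\boldsymbol{\wp},\widetilde{C}_2,\mathcal{S}_{\cdots},\Gamma_1,\Gamma_2$ so that the resulting power-law self-improvement of $D_n$ is incompatible with the ceiling on $\limsup_n D_n$ produced by $c<c_0-\Theta\mu^\vartheta$; this is exactly the purpose of the choices \eqref{eq4.999}--\eqref{eq4.21}.
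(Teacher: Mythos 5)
Your proposal is correct and follows essentially the same route as the paper's proof: boundedness via Lemma \ref{lem4.1} and \eqref{eq4.444}, identification of the weak limit as a (nonnegative, nontrivial) solution via Propositions \ref{prop3.5}--\ref{prop3.6} and the density argument, the Br\'ezis--Lieb splitting of Proposition \ref{prop4.3}, the HLS/Trudinger--Moser estimate of $\mathcal{G}(u_n-w_{\mu,\gamma})$ through the embedding constant $\mathcal{S}_{\frac{2Q\zeta\nu}{2Q-2\beta-\lambda},Q}$, and a contradiction between the resulting power-law self-improvement and the level restriction \eqref{eq4.21}. The only cosmetic deviations are that you obtain the ceiling on $\limsup_n\big(\|u_n-w_{\mu,\gamma}\|^p_{HW^{1,p}}+\|u_n-w_{\mu,\gamma}\|^Q_{HW^{1,Q}}\big)$ from the energy splitting together with $J(w_{\mu,\gamma})\ge-\Theta\mu^{\vartheta}$, whereas the paper uses \eqref{eq4.444} on $\|u_n-w_{\mu,\gamma}\|_{HW^{1,Q}}$ directly and tests $J^\prime(u_n)$ against $u_n-w_{\mu,\gamma}$ with Fatou instead of invoking $\langle J^\prime(w_{\mu,\gamma}),w_{\mu,\gamma}\rangle=0$ first; also note that the same-order term $C_{1,\varepsilon}\gamma\mathcal{S}^{-Q}_{\frac{2Q^2}{2Q-2\beta-\lambda},Q}\|u_n-w_{\mu,\gamma}\|^Q_{HW^{1,Q}}$ in \eqref{eq4.30} is absorbed by taking $\varepsilon$ small (this is what $\boldsymbol{\wp}$ encodes), not by the a priori smallness of $D_n$.
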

\begin{proof}
  Let $\{u_n\}_n$ be a $(PS)_c$ sequence for $J$ in $\mathbf{X}$, therefore we have  \begin{equation}\label{eq4.22}
  J(u_n)\to c~\text{and}~ J^\prime(u_n)\to 0~~\text{in}~\mathbf{X^\ast}~\text{as} ~n\to\infty. 
 \end{equation}
Now, by using Lemma \ref{lem4.1}, we obtain that $\{u_n\}_n$ is bounded in $\mathbf{X}$. Consequently, there exists $w_{\mu,\gamma}\in\mathbf{X}$ such that up to a subsequence, not relabelled, $u_n\rightharpoonup w_{\mu,\gamma}$ in $\mathbf{X}$ as $n\to\infty$. Following similar idea as in Lemma \ref{lem3.4}, we can prove $\{u_n\}_n$ and $w_{\mu,\gamma}$ are nonnegative in $\mathbf{X}$. In view of \eqref{eq4.21}, we can notice that \eqref{eq4.3} still holds. Using this fact together with $c<c_0-\Theta\mu^{\vartheta}$, we obtain that \eqref{eq3.17} holds. Therefore, by arguing similarly as in Proposition \ref{prop3.6}, we can infer that $D_H u_n\to D_H w_{\mu,\gamma}$ a.e. in $\mathbb{H}^N$ as $n\to\infty$. Consequently, by using Corollary \ref{cor2.3}, Lemma \ref{lem2.4} and the fact that the embedding 
     $\mathbf{X}\hookrightarrow HW^{1,t}(\mathbb{H}^N)$ is continuous for $t\in\{p,Q\}$, we obtain 
  \begin{equation}\label{eq4.23}
   \begin{cases}
      u_n\rightharpoonup w_{\mu,\gamma} ~\text{in}~HW^{1,t}(\mathbb{H}^N),u_n\to w_{\mu,\gamma} ~\text{in}~ L^s_g(\mathbb{H}^N),\\
       u_n\to w_{\mu,\gamma} ~\text{in}~L^\tau(B_R),~\text{for any}~R>0~\text{and}~\tau\in[1,\infty),\\
    
      u_n\to w_{\mu,\gamma} ~\text{a.e. in}~\mathbb{H}^N,~
      D_H u_n\to D_H w_{\mu,\gamma} ~\text{a.e. in}~\mathbb{H}^N,\\
       |D_H u_n|_H^{t-2}D_H u_n \rightharpoonup |D_H w_{\mu,\gamma}|_H^{t-2}D_H w_{\mu,\gamma}~\text{in}~L^{\frac{t}{t-1}}(\mathbb{H}^N,\mathbb{R}^{2N}),\\~  u_n^{t-1} \rightharpoonup w_{\mu,\gamma}^{t-1} ~\text{in}~L^{\frac{t}{t-1}}(\mathbb{H}^N)~\text{as}~n\to\infty.
      \end{cases}  
  \end{equation} 
From \eqref{eq4.23} and Br\'ezis-Lieb lemma, we deduce that 
\begin{equation}\label{eq4.24}   
        \|u_n-w_{\mu,\gamma}\|^t_{HW^{1,t}}=\|u_n\|^{t}_{HW^{1,t}}-\|w_{\mu,\gamma}\|^t_{HW^{1,t}}+o_n(1)~~\text{as}~n\to\infty,        
\end{equation}
for any $t\in\{p,Q\}$. Also, due to \eqref{eq4.23}, we can assume that up to a subsequence, still denoted by the same symbol such that $\|u_n\|_{HW^{1,p}}\to l_p$ and $\|u_n\|_{HW^{1,Q}}\to l_Q$ as $n\to\infty$, where $l_p,l_Q\geq 0$. In light of \eqref{eq4.24}, we have  
 $$\|u_n-w_{\mu,\gamma}\|^Q_{HW^{1,Q}}=\|u_n\|^{Q}_{HW^{1,Q}}-\|w_{\mu,\gamma}\|^Q_{HW^{1,Q}}+o_n(1)\leq \|u_n\|^{Q}_{HW^{1,Q}}+o_n(1)~\text{as}~n\to\infty. $$   
Letting $n\to\infty$ in the above inequality and using \eqref{eq4.444}, we get
    \begin{align}\label{eq4.25}
    \limsup_{n\to\infty}\|u_n-w_{\mu,\gamma}\|^{Q^\prime}_{HW^{1,Q}}&\leq \limsup_{n\to\infty}\|u_n\|^{Q^\prime}_{HW^{1,Q}}\leq \bigg(\frac{c+\Theta\mu^{\vartheta}}{\boldsymbol{\kappa}}\bigg)^{\frac{1}{Q-1}}<\frac{\alpha_Q}{2^{Q^\prime}}<\alpha_Q~~(\text{thanks to} ~\eqref{eq4.21}).\end{align}
Due to the Fatous lemma, we also have
\begin{equation}
    \liminf_{n\to\infty} \displaystyle\int_{\mathbb{H}^N}\Bigg(\displaystyle\int_{\mathbb{H}^N}\frac{F(\eta,u_n)}{r(\eta)^\beta {d_K(\xi,\eta)}^\lambda}~\mathrm{d}\eta\Bigg)\frac{f(\xi,u_n)}{r(\xi)^\beta} w_{\mu,\gamma}~\mathrm{d}\xi\geq \displaystyle\int_{\mathbb{H}^N}\Bigg(\displaystyle\int_{\mathbb{H}^N}\frac{F(\eta,w_{\mu,\gamma})}{r(\eta)^\beta {d_K(\xi,\eta)}^\lambda}~\mathrm{d}\eta\Bigg)\frac{f(\xi,w_{\mu,\gamma})}{r(\xi)^\beta} w_{\mu,\gamma}~\mathrm{d}\xi.
\end{equation}
Further, by the H\"older's inequality and \eqref{eq4.23}, we get
$$\bigg|\displaystyle\int_{\mathbb{H}^N} g(\xi) u_n^{s-1}(u_n-w_{\mu,\gamma})~\mathrm{d}\xi \bigg|\leq \|u_n\|^{s-1}_{s,g}\|u_n-w_{\mu,\gamma}\|_{s,g}\to 0~\text{as}~n\to\infty.$$
It follows that 
\begin{align}\label{eq4.27}
 \lim_{n\to\infty} \displaystyle\int_{\mathbb{H}^N} g(\xi) u_n^{s-1}(u_n-w_{\mu,\gamma})~\mathrm{d}\xi=0.   
\end{align}
By using \eqref{eq4.23} and \eqref{eq4.25} --\eqref{eq4.27}, we obtain
 \begin{align*}0&=\lim_{n\to\infty}\langle{J^\prime(u_n),u_n-w_{\mu,\gamma}}\rangle \\
 &=\sum_{t\in\{p,Q\}}\Bigg( \lim_{n\to\infty}\Bigg[\|u_n\|^t_{HW^{1,t}}-\int_{\mathbb{H}^N}\Big(|D_H u_n|_H^{t-2}(D_H u_n,D_H w_{\mu,\gamma})_H+u_n^{t-1}w_{\mu,\gamma}\Big)~\mathrm{d}\xi\Bigg]\Bigg)\\
 &\qquad-\mu \lim_{n\to\infty}\displaystyle\int_{\mathbb{H}^N} g(\xi) u_n^{s-1}(u_n-w_{\mu,\gamma})~\mathrm{d}\xi -\gamma \lim_{n\to\infty} \displaystyle\int_{\mathbb{H}^N}\Bigg(\displaystyle\int_{\mathbb{H}^N}\frac{F(\eta,u_n)}{r(\eta)^\beta {d_K(\xi,\eta)}^\lambda}~\mathrm{d}\eta\Bigg)\frac{f(\xi,u_n)}{r(\xi)^\beta}(u_n- w_{\mu,\gamma})~\mathrm{d}\xi\\
 &\geq \sum_{t\in\{p,Q\}} \big(l^t_t-\|w_{\mu,\gamma}\|^t_{HW^{1,t}}\big)-\gamma \lim_{n\to\infty} \big(\mathcal{G}(u_n)-\mathcal{G}(w_{\mu,\gamma})\big)~\mathrm{d}\xi.   \end{align*}
 Now by employing Proposition \ref{prop4.3} and \eqref{eq4.24}, we have
 \begin{align}\label{eq4.28} \|u_n-w_{\mu,\gamma}\|^p_{HW^{1,p}}+\|u_n-w_{\mu,\gamma}\|^Q_{HW^{1,Q}}\leq \gamma  \mathcal{G}(u_n-w_{\mu,\gamma})+o_n(1)~\text{as}~n\to\infty .\end{align}
Following similar arguments as in Proposition \ref{prop3.5}, we can deduce that there exists a constant $\widetilde{C}>0$ such that
\begin{equation}\label{eq4.29}
    \Bigg\| \displaystyle \int_{\mathbb{H}^N} \frac{F(\eta,u_n- w_{\mu,\gamma})}{r(\xi)^{\beta }~{d_K(\xi,\eta)}^\lambda r(\eta)^{\beta}}~\mathrm{d}\eta\Bigg\|_{\frac{2Q}{2\beta+\lambda}}\leq \widetilde{C}.
\end{equation}
It follows from \eqref{eq4.25} that there exist $m>0$ and $n_0\in\mathbb{N}$ such that $\|u_n-w_{\mu,\gamma}\|^{Q^\prime}_{HW^{1,Q}}<m<\alpha_Q$ for all $n\geq n_0$. Further, let $\nu\geq 2Q^2 $ in \eqref{eq3.2} and $\zeta,\zeta^\prime>1$ satisfying $\frac{1}{\zeta}+\frac{1}{\zeta^\prime}=1$. Choose $\alpha>\alpha_0$ close to $\alpha_0$, $\zeta^\prime$ close to $1$ in such a way that we still have $\displaystyle\frac{2\alpha Q \zeta^\prime }{2Q-2\beta-\lambda} \|u_n-w_{\mu,\gamma}\|^{Q^\prime}_{HW^{1, Q}}<m<\alpha_Q$ for all $n\geq n_0$. Therefore, from the H\"older's inequality, \eqref{eq3.2}, \eqref{eq4.29} , Corollary \ref{cor2.3} and Corollary \ref{cor2.5}, we obtain for all $n\geq n_0$ that
\begin{align}\label{eq4.30}
    \Big|\mathcal{G}(u_n-w_{\mu,\gamma})\Big|& \leq \widetilde{C} \|f(\cdot,u_n- w_{\mu,\gamma})(u_n- w_{\mu,\gamma})\|_{\frac{2Q}{2Q-2\beta-\lambda}} \notag\\
    & \leq \widetilde{C}\Bigg[\displaystyle\int_{\mathbb{H}^N}\Big[\varepsilon |u_n- w_{\mu,\gamma}|^{Q}+\widetilde{C}_\varepsilon |u_n- w_{\mu,\gamma}|^\nu\Phi(\alpha |u_n- w_{\mu,\gamma}|^{Q^\prime}) \Big]^{\frac{2Q}{2Q-2\beta-\lambda}}~\mathrm{d}\xi\Bigg]^{\frac{2Q-2\beta-\lambda}{2Q}} \notag\\
    &\leq C_{1,\varepsilon} \|u_n- w_{\mu,\gamma}\|^Q_{\frac{2Q^2}{2Q-2\beta-\lambda}}+ C_2 \Bigg[ \displaystyle\int_{\mathbb{H}^N} |u_n- w_{\mu,\gamma}|^{\frac{2Q\nu}{2Q-2\beta-\lambda}} \Phi\bigg(\frac{2\alpha Q}{2Q-2\beta-\lambda} |u_n- w_{\mu,\gamma}|^{Q^\prime}\bigg)~\mathrm{d}\xi\Bigg]^{\frac{2Q-2\beta-\lambda}{2Q}} \notag\\
    & \leq C_{1,\varepsilon} \mathcal{S}^{-Q}_{\frac{2Q^2}{2Q-2\beta-\lambda},Q} \|u_n- w_{\mu,\gamma}\|^Q_{HW^{1,Q}}+C_2 \|u_n- w_{\mu,\gamma}\|^\nu_{\frac{2Q\nu\zeta}{2Q-2\beta-\lambda}} \notag\\
    &\qquad\times \Bigg[ \displaystyle\int_{\mathbb{H}^N}  \Phi\bigg(\frac{2\alpha Q\zeta^\prime}{2Q-2\beta-\lambda} \|u_n-w_{\mu,\gamma}\|^{Q^\prime}_{HW^{1, Q}} \bigg|\frac{u_n- w_{\mu,\gamma}}{\|u_n-w_{\mu,\gamma}\|_{HW^{1, Q}}}\bigg|^{Q^\prime}\bigg)~\mathrm{d}\xi\Bigg]^{\frac{2Q-2\beta-\lambda}{2Q\zeta^\prime}} \notag\\
    & \leq C_{1,\varepsilon} \mathcal{S}^{-Q}_{\frac{2Q^2}{2Q-2\beta-\lambda},Q} \|u_n- w_{\mu,\gamma}\|^Q_{HW^{1,Q}}+\widetilde{C}_2 \|u_n- w_{\mu,\gamma}\|^\nu_{\frac{2Q\nu\zeta}{2Q-2\beta-\lambda}},
\end{align}
where $C_{1,\varepsilon}, C_2$ are positive constants, $\mathcal{S}_{\frac{2Q^2}{2Q-2\beta-\lambda},Q}$ is the best constant in the embedding  $HW^{1,Q}(\mathbb{H}^N)\hookrightarrow L^{\frac{2Q^2}{2Q-2\beta-\lambda}}(\mathbb{H}^N)$ and 
$$\widetilde{C}_2=C_2 \Bigg[ \sup_{n\geq n_0}\displaystyle\int_{\mathbb{H}^N}  \Phi\bigg(\frac{2\alpha Q\zeta^\prime}{2Q-2\beta-\lambda} \|u_n-w_{\mu,\gamma}\|^{Q^\prime}_{HW^{1, Q}} \bigg|\frac{u_n- w_{\mu,\gamma}}{\|u_n-w_{\mu,\gamma}\|_{HW^{1, Q}}}\bigg|^{Q^\prime}\bigg)~\mathrm{d}\xi\Bigg]^{\frac{2Q-2\beta-\lambda}{2Q\zeta^\prime}},$$
which is finite due to Theorem \ref{thm2.7}. Hence, from \eqref{eq4.28} and \eqref{eq4.30}, we get as $n\to\infty$ that
$$\widetilde{C}_2 \gamma \|u_n- w_{\mu,\gamma}\|^\nu_{\frac{2Q\nu\zeta}{2Q-2\beta-\lambda}}+o_n(1)\geq \|u_n-w_{\mu,\gamma}\|^p_{HW^{1,p}}+\bigg(1-C_1 \gamma \mathcal{S}^{-Q}_{\frac{2Q^2}{2Q-2\beta-\lambda},Q}\bigg)\|u_n-w_{\mu,\gamma}\|^Q_{HW^{1,Q}}. $$
Note that $\gamma>1$, $\varepsilon>0$ is arbitrary, and $C_{1,\varepsilon}$ is a positive constant having positive power of $\varepsilon$. Therefore, we can choose $\gamma\in \bigg(1,\frac{1}{C_{1,\varepsilon} \mathcal{S}^{-Q}_{\frac{2Q^2}{2Q-2\beta-\lambda},Q}\ }\bigg)$ and denote $\frac{1}{\boldsymbol{\wp}}=1-C_{1,\varepsilon} \gamma \mathcal{S}^{-Q}_{\frac{2Q^2}{2Q-2\beta-\lambda},Q}>0$, then for $n\to\infty$, we deduce from the above inequality that 
\begin{equation}\label{eq4.31}
  \widetilde{C}_2 \gamma \|u_n- w_{\mu,\gamma}\|^\nu_{\frac{2Q\nu\zeta}{2Q-2\beta-\lambda}}+o_n(1)\geq \|u_n-w_{\mu,\gamma}\|^p_{HW^{1,p}}+\frac{1}{\boldsymbol{\wp}}\|u_n-w_{\mu,\gamma}\|^Q_{HW^{1,Q}}.  
\end{equation}
Using the fact that the embedding $HW^{1,Q}(\mathbb{H}^N)\hookrightarrow L^{\frac{2Q\zeta\nu}{2Q-2\beta-\lambda}}(\mathbb{H}^N)$ is continuous, we get
\begin{equation}\label{eq4.32}
  \|u_n- w_{\mu,\gamma}\|^Q_{\frac{2Q\nu\zeta}{2Q-2\beta-\lambda}}\leq  \mathcal{S}_{\frac{2Q\zeta\nu}{2Q-2\beta-\lambda},Q}^{-Q} \|u_n-w_{\mu,\gamma}\|^Q_{HW^{1,Q}}.
\end{equation}
Combining \eqref{eq4.31} and \eqref{eq4.32}, we get as $n\to\infty$ that
$$ \widetilde{C}_2 \gamma \|u_n- w_{\mu,\gamma}\|^\nu_{\frac{2Q\nu\zeta}{2Q-2\beta-\lambda}}+o_n(1)\geq \frac{1}{\boldsymbol{\wp}\mathcal{S}_{\frac{2Q\zeta\nu}{2Q-2\beta-\lambda},Q}^{-Q} } \|u_n- w_{\mu,\gamma}\|^Q_{\frac{2Q\nu\zeta}{2Q-2\beta-\lambda}}. $$
Denote $\displaystyle\lim_{n\to\infty} \|u_n- w_{\mu,\gamma}\|_{\frac{2Q\nu\zeta}{2Q-2\beta-\lambda}}=L\geq 0$. In order to complete the proof of Lemma \ref{lem4.5}, it is sufficient to show that $L=0$. Indeed, if not, let $L>0$, then sending $n\to\infty$ in the above inequality, we conclude that
\begin{equation}\label{eq4.33}
    L\geq \Bigg(\frac{1}{\boldsymbol{\wp} \widetilde{C}_2 \gamma\mathcal{S}_{\frac{2Q\zeta\nu}{2Q-2\beta-\lambda},Q}^{-Q} }\Bigg)^{\frac{1}{\nu-Q}}.
\end{equation}
In view of \eqref{eq4.444}, \eqref{eq4.24} and \eqref{eq4.32}, we get as $n\to\infty$  
$$ c+\Theta\mu^{\vartheta}+o_n(1)\geq \boldsymbol{\kappa} \|u_n-w_{\mu,\gamma}\|^{Q}_{HW^{1,Q}}\geq \frac{\boldsymbol{\kappa}}{\mathcal{S}_{\frac{2Q\zeta\nu}{2Q-2\beta-\lambda},Q}^{-Q} } \|u_n- w_{\mu,\gamma}\|^Q_{\frac{2Q\nu\zeta}{2Q-2\beta-\lambda}}.  $$
Letting $n\to\infty$ in the above inequality and using the hypothesis along with \eqref{eq4.21} and \eqref{eq4.33}, we have
$$c_0> \frac{\boldsymbol{\kappa}L^Q}{\mathcal{S}_{\frac{2Q\zeta\nu}{2Q-2\beta-\lambda},Q}^{-Q} } \geq \boldsymbol{\kappa} \bigg(\boldsymbol{\wp}\widetilde{C}_2\gamma \mathcal{S}_{\frac{2Q\zeta\nu}{2Q-2\beta-\lambda},Q}^{-\nu}\bigg)^{\frac{Q}{Q-\nu}}>c_0 ,$$
which is a contradiction. It follows that $L=0$. Consequently, sending $n\to\infty$ in \eqref{eq4.31}, we infer that $ u_n\to w_{\mu,\gamma} ~\text{in}~HW^{1,t}(\mathbb{H}^N)$ as $n\to\infty$ for $t\in\{p,Q\}$. Hence, we deduce that $u_n\to w_{\mu,\gamma}$ in $\mathbf{X}$ as $n\to\infty$. Moreover, we get $J(w_{\mu,\gamma})=c>0$ and $J^\prime(w_{\mu,\gamma})=0$. This shows that $w_{\mu,\gamma}\neq 0$ and $w_{\mu,\gamma}$ is a nontrivial nonnegative solution of \eqref{main problem}. This completes the proof.
\end{proof}

Suppose that $\mu\in(0,\mu^\ast]$, where $\mu^\ast$ as in Lemma \ref{lem3.2} and $\gamma>0$ be fixed. Observe that the mountain pass geometrical structures of the energy functional $J$ are satisfied, thanks to Lemma \ref{lem3.2} and Lemma \ref{lem3.3}.  Define a mountain pass level $c_{\mu,\gamma}$ for $J$ by
\begin{equation}\label{eq4.34}   c_{\mu,\gamma}=\inf_{g\in\Lambda}\max_{s\in[0,1] } J(g(s)),
\end{equation}
where $\Lambda=\{g\in\mathcal{C}([0,1],\mathbf{X}):~g(0)=0,~J(g(1))<0\}$. Note that $c_{\mu,\gamma}>0$. To apply the mountain pass theorem, which provides us the second independent solution for \eqref{main problem}, we shall show that $c_{\mu,\gamma}$ falls into the range of validity of the $(PS)_c$ condition given in Lemma \ref{lem4.5}.
\begin{lemma}\label{lem4.6}
Assume that $\gamma>\gamma^\ast$, where $\gamma^\ast$ is defined as in \eqref{eq4.999}. Then, there exists $\tilde{\mu}\in (0,\mu^\ast]$, depending on $\gamma$, such that $c_{\mu,\gamma}<c_0-\Theta\mu^{\vartheta}$ for any $\mu\in(0,\tilde{\mu}]$, with $c_0$ satisfying \eqref{eq4.17} and $\Theta$ as stated in Lemma \ref{lem4.1}.
\end{lemma}
\begin{proof}
Let $\gamma>\gamma^\ast$ be fixed and $e\in\mathbf{X}$ as in Lemma \ref{lem3.3}. Choose $\Bar{\mu}\in(0,\mu^\ast]$ sufficiently small such that $c_0\geq 2\Theta\mu^{\vartheta}$ holds for all $\mu\in(0,\Bar{\mu}]$. One can notice that $J(\ell e)\to 0$ as $\ell\to 0^+$. Therefore, we can choose $\ell_0$ very small such that 
\begin{equation}\label{eq4.36}
    \sup_{\ell\in[0,\ell_0]} J(\ell e)<\frac{c_0}{2}\leq c_0-\Theta\mu^{\vartheta}~\text{for all}~\mu\in(0,\Bar{\mu}].
\end{equation}
Observe that $\gamma>1$, $Q>p$ and hence by using $(f4)$, we have
\begin{align*} \sup_{\ell\geq \ell_0} J(\ell e)&\leq \sup_{\ell\geq \ell_0}\bigg(\sum_{t\in\{p,Q\}}\Bigg[\frac{\ell^t}{t}\|e\|^{t}_{HW^{1,t}}\Bigg]-\frac{\mu \ell^s}{s}\|e\|^s_{s,g}-\frac{2\gamma \widehat{C}^2 \ell^{2\upsilon}}{\upsilon^2}M(e)\bigg)\\
&\leq \sum_{t\in\{p,Q\}}\Bigg[\max_{\ell\geq 0}\bigg(\frac{\ell^t}{t}\|e\|^{t}_{HW^{1,t}}-\frac{\gamma \widehat{C}^2 \ell^{2\upsilon}}{\upsilon^2}M(e)\Bigg] -\frac{\mu \ell_0^s}{s}\|e\|^s_{s,g}\\
&= \displaystyle\sum_{t\in\{p,Q\}} \bigg(\frac{1}{t}-\frac{1}{2\upsilon}\bigg)\bigg(\frac{\upsilon}{2\gamma \widehat{C}^{2}}\bigg)^{\frac{t}{2\upsilon-t}}\frac{\|e\|^{\frac{2\upsilon t}{2\upsilon-t}}_{HW^{1,t}}}{\big(M(e)\big)^{\frac{t}{2\upsilon-t}}} -\frac{\mu \ell_0^s}{s}\|e\|^s_{s,g} \\
&< {\gamma}^{-\frac{p}{2\upsilon-p}}\Bigg(\displaystyle\sum_{t\in\{p,Q\}} \bigg(\frac{1}{t}-\frac{1}{2\upsilon}\bigg)\bigg(\frac{\upsilon}{2 \widehat{C}^{2}}\bigg)^{\frac{t}{2\upsilon-t}}\frac{\|e\|^{\frac{2\upsilon t}{2\upsilon-t}}_{HW^{1,t}}}{\big(M(e)\big)^{\frac{t}{2\upsilon-t}}}\Bigg)-\frac{\mu \ell_0^s}{s}\|e\|^s_{s,g}\\
&<c_0-\frac{\mu \ell_0^s}{s}\|e\|^s_{s,g},
\end{align*}
we thank to \eqref{eq4.17}. Next, we choose $\Bar{\Bar{\mu}}\in(0,\mu^\ast]$ very small such that
  $$-\frac{\mu \ell_0^s}{s}\|e\|^s_{s,g}<-\Theta\mu^{\vartheta}~\text{for all}~\mu\in(0,\Bar{\Bar{\mu}}].$$  
It follows that
\begin{equation}\label{eq4.38}
 \sup_{\ell\geq \ell_0} J(\ell e)< c_0-\Theta\mu^{\vartheta}~\text{for all}~\mu\in(0,\Bar{\Bar{\mu}}].  
\end{equation}
Take $\tilde{\mu}=\min\{\Bar{\mu},\Bar{\Bar{\mu}}\}$ and the path $g(\ell)=\ell e$ belongs to $\Lambda$, where $\ell\in[0,1]$. Therefore, from \eqref{eq4.36} and \eqref{eq4.38}, we get
  $$ c_{\mu,\gamma}\leq\max_{\ell\in[0,1] } J(g(\ell))\leq \max_{\ell\geq 0 } J(g(\ell))< c_0-\Theta\mu^{\vartheta}~\text{for all}~\mu\in(0,\tilde{\mu}],$$    
and thus we conclude the proof.
\end{proof}
\begin{proof}[\textbf{Proof of Theorem \ref{thm1.2}}.]
The energy functional $J$ satisfies the mountain pass geometric structures for all $\mu\in(0,\mu^\ast]$ and for all $\gamma>0$, thanks to Lemma \ref{lem3.2} and Lemma \ref{lem3.3}. Moreover, by Lemma \ref{lem4.5} and Lemma \ref{lem4.6}, we obtain that for $\gamma>\gamma^\ast$, there exists $\tilde{\mu}\leq {\mu}^\ast$ such that $J$ admits a critical point $w_{\mu,\gamma}\in\mathbf{X}$ at the mountain pass level $c_{\mu,\gamma}>0$, estimated as in Lemma \ref{lem4.6} for all $\mu\in(0,\tilde{\mu}]$. Consequently, by Lemma \ref{lem4.5}, we have $w_{\mu,\gamma}\in\mathbf{X}$ is a nontrivial nonnegative solution of \eqref{main problem}, which is independent of the solution $u_{\mu,\gamma}$ of \eqref{main problem} obtained in Theorem \ref{thm1.1}, because $J(u_{\mu,\gamma})=m_{\mu,\gamma}<0<c_{\mu,\gamma}=J(w_{\mu,\gamma})$. This completes the proof of Theorem \ref{thm1.2}.
\end{proof}
\section*{Acknowledgements}
The first author wishes to convey his sincere appreciation for the DST INSPIRE Fellowship with reference number DST/INSPIRE/03/2019/000265 funded by the Government of India. The second author acknowledges the support provided by the Start-up Research Grant from DST-SERB with sanction no. SRG/2022/000524. The third author was supported by the DST-INSPIRE Grant DST/INSPIRE/04/2018/002
208 sponsored by the Government of India. The fourth author received assistance from the UGC Grant with reference no. 191620169606 funded by the Government of India.

\bibliography{ref}
\bibliographystyle{abbrv}
\Addresses
\end{document}